\documentclass[dvipdfmx]{compositio}%[manuscript]{aomart}
%{amsart}{degruyter-crelle}

%%%%%%%%%%%%%%%%%%%%%%%%%%%%%%%%%%%%%%%%%%%%%%%%%%%%%%%%%%%%%%%%%%%%%%%%%%%%%%%%%%%%%%%%%%
\usepackage{amsmath,amssymb,amsthm} 
\usepackage[all]{xy}
\usepackage{extarrows}
\usepackage{graphicx}
\usepackage{bm} 
\usepackage{comment, url}
\usepackage[normalem]{ulem}
\usepackage{multicol}

\usepackage[pagewise, mathlines]{lineno} %\linenumbers 
\usepackage{time}

\usepackage{etoolbox}          %% <- for \cspreto, \csappto and \patchcmd

%% Patch 'normal' math environments:
\newcommand*\linenomathpatch[1]{%
  \cspreto{#1}{\linenomath}%
  \cspreto{#1*}{\linenomath}%
  \csappto{end#1}{\endlinenomath}%
  \csappto{end#1*}{\endlinenomath}%
}
%% Patch AMS math environments:
\newcommand*\linenomathpatchAMS[1]{%
  \cspreto{#1}{\linenomathAMS}%
  \cspreto{#1*}{\linenomathAMS}%
  \csappto{end#1}{\endlinenomath}%
  \csappto{end#1*}{\endlinenomath}%
}

%% Definition of \linenomathAMS depends on whether the mathlines option is provided
\expandafter\ifx\linenomath\linenomathWithnumbers 
 \let\linenomathAMS\linenomathWithnumbers
 %% The following line gets rid of an extra line numbers at the bottom:
 \patchcmd\linenomathAMS{\advance\postdisplaypenalty\linenopenalty}{}{}{}
\else
  \let\linenomathAMS\linenomathNonumbers
\fi

\linenomathpatch{equation}
\linenomathpatchAMS{gather}
\linenomathpatchAMS{multline}
\linenomathpatchAMS{align}
\linenomathpatchAMS{alignat}
\linenomathpatchAMS{flalign}

% Disable line numbering during measurement step of multline
\makeatletter
\patchcmd{\mmeasure@}{\measuring@true}{
  \measuring@true
  \ifnum-\linenopenaltypar>\interdisplaylinepenalty
    \advance\interdisplaylinepenalty-\linenopenalty
  \fi
  }{}{}
\makeatother

%\usepackage{tocloft}
%\renewcommand\cftsubsubsecaftersnum{.}
%\renewcommand\cftsubsubsecpresnum{}%\S}

%%%%%%%%%%%%%%%%%%%%%%%%%%%%%%%%%%%%%%%%%%%%%%%%%%%%%%%%%%%%%%%%%%%%%%%%%%%%%%%%%%%%%%%%%%

\newtheorem{thm}{Theorem}[section]
\newtheorem{lem}[thm]{Lemma}

\newtheorem{prop}[thm]{Proposition}  

\theoremstyle{remark}

\theoremstyle{definition}

\newtheorem{rem}[thm]{Remark} 

\newtheorem{eg}[thm]{Examples}       
                 
\newtheorem{conj}[thm]{Conjecture}

\newtheorem{def/prop}[thm]{Definition/Proposition}

%%% Definition for multi number series

%{\arabic{thmy}}

%%% Definition for multi number series
%\renewcommand{\thethmx}{\Alph{thmx}}

\numberwithin{equation}{section}

%\renewcommand\proofname{\bf proof}
%\renewcommand{\thefootnote}{\fnsymbol{footnote}}

%%%%%%%%%%%%%%%%%%%%%%%%%%%%%%%%%%%%%%%%%%%%%%%%%%%%%%%%%%%%%%%%%%%%%%%%%%%%%%%%%%%%%%%%%%
\usepackage[usenames]{color}

\def\tr{\mathop{\mathrm{tr}}\nolimits}
\def\det{\mathop{\mathrm{det}}\nolimits}

\def\SL{\mathop{\mathrm{SL}}\nolimits}
\def\GL{\mathop{\mathrm{GL}}\nolimits}

\def\mod{\mathop{\mathrm{mod}}\nolimits}

\newcommand{\mf}[1]{{\mathfrak{#1}}}

\newcommand{\bb}[1]{{\mathbb{#1}}}
\newcommand{\mca}[1]{{\mathcal{#1}}}

\newcommand{\To}{\longrightarrow}

\newcommand{\inj}{\hookrightarrow}
\newcommand{\surj}{\twoheadrightarrow}

\newcommand{\congto}{\overset{\cong}{\to}}

\newcommand{\Z}{\bb{Z}}
\newcommand{\Zp}{\bb{Z}_{p}}
\newcommand{\Q}{\bb{Q}}
\newcommand{\Qp}{\bb{Q}_{p}}
\newcommand{\R}{\bb{R}}
\newcommand{\C}{\bb{C}}

\newcommand{\F}{\bb{F}}
\newcommand{\p}{\mf{p}}
\renewcommand{\P}{\mf{P}}

\newcommand{\mh}{\textsc{m}}

\newcommand{\ol}{\overline}
\newcommand{\ul}{\underline}

\newcommand{\ds}{\displaystyle}

\newcommand{\wt}[1]{{\widetilde{#1}}}
\newcommand{\wh}[1]{{\widehat{#1}}}

\DeclareMathOperator*{\restprod}%
 {\mathchoice{\ooalign{\ensuremath{\displaystyle\prod}\crcr\ensuremath{\displaystyle\coprod}}}%
             {\ooalign{\ensuremath{\textstyle\prod}\crcr\ensuremath{\textstyle\coprod}}}%
             {\ooalign{\ensuremath{\scriptstyle\prod}\crcr\ensuremath{\scriptstyle\coprod}}}%
             {\ooalign{\ensuremath{\scriptscriptstyle\prod}\crcr\ensuremath{\scriptscriptstyle\coprod}}}%
 }

\newcommand{\pmx}[1]{\begin{pmatrix}#1\end{pmatrix}}
\newcommand{\spmx}[1]{{\small \pmx{#1}}}

\def\be { \begin{equation} }
\def\ee { \end{equation} }

%\renewcommand{\thesection}{\thechapter.\arabic{section}}

%%%%%%%%%%%%%%%%%%%%% Publisher's Area please ignore %%%%%%%%%%%%%%
%\catchline{}{}{}{}{}
%%%%%%%%%%%%%%%%%%%%%%%%%%%%%%%%%%%%%%%%%%%%%%%%%%%%%%%%%%%%%%%%%%%

\title[Twisted Iwasawa invariants of knots] 
{Twisted Iwasawa invariants of knots}

\author{Ryoto Tange} 
\email{rtange.math@gmail.com}
\address{Department of Mathematics, School of Education, Waseda University; 
1-104 Totsuka-cho, Shinjuku-ku, 169-8050, Tokyo, Japan} 

\author{Jun Ueki} 
\email{uekijun46@gmail.com} %(corresponding author)}%, comments are welcome!)}
\address{Department of Mathematics, Faculty of Science, Ochanomizu University; 
2-1-1 Otsuka, Bunkyo-ku, 112-8610, Tokyo, Japan} 
%Department of Mathematics, School of System Design and Technology, Tokyo Denki University\\ 
%5 Senju Asahi-cho, Adachi-ku, 120-8551, Tokyo, Japan}

\subjclass{Primary 
57K10, 11R23; Secondary 
57M10, 11S99%, 11T99 
} %; Secondary 57M27} 
\keywords{knot, genus, fiberedness, twisted Alexander polynomial, Iwasawa invariants, profinite rigidity, arithmetic topology %\\[1mm] %
\hspace{10cm} \framebox{%last updated：
\today \ \now}
} 
%\framebox{%last updated：
%\today \ \now}}% \framebox{last updated：\today \ \ \now}}
%\keywords 
%\footnote[0]{keywords: profinite completion, completed group ring, knot, Alexander polynomial.}
%{List four to six; Should characterize article.} 
%\ccode
%\footnote[0]{Mathematics Subject Classification 2010: Primary 57M25, 20E26, Secondary 12B05, 20E18}  %57M12}
%11S05  (1980-now) Polynomials
%57M25 knots and links in S^3
%57M27 Invariants of knots and 3-manifolds
%	20E18  	Limits, profinite groups
%	20E26  	Residual properties and generalizations; residually finite groups
%	11R06　Mahler measure 
%  57M12 Special coverings
% 11S99 Local and p-adic fields
%  Finite fields and commutative rings

\begin{document}
	
\begin{abstract} 
Let $p$ be a prime number and $m$ an integer coprime to $p$. 
In the spirit of arithmetic topology, we introduce the notions of the twisted Iwasawa invariants $\lambda, \mu, \nu$ of ${\rm GL}_N $-representations and $\Z/m\Z\times \Zp$-covers of knots.
We prove among other things that the set of Iwasawa invariants determines the genus and the fiberedness of a knot, yielding their profinite rigidity. Several intuitive examples are attached. 
We further prove the $\mu=0$ theorem for ${\rm SL}_2$-representations of twist knot groups and give some remarks. 
\end{abstract}

%\ \\[-35mm]

\maketitle 

\setcounter{tocdepth}{2} 
{\small 
\tableofcontents } 

%\newpage 

%\setcounter{section}{-1} 
\section{Introduction} 
The analogy between the Alexander--Fox theory and the Iwasawa theory for infinite cyclic covers is historically important
amongst the analogies between knots and %prime numbers 
non-zero prime ideals, or 3-manifolds and the rings of integers of algebraic number fields in arithmetic topology \cite{Mazur1963, Morishita2012},   
whereas the twisted Alexander--Fox theory of knots and the Iwasawa theory associated to Galois representations have been seen as the second generation of those classics. 
Besides, $p$ being a prime number, $p$-adic refinements of the Alexander--Fox theory for knots and links, which have been developed by the authors and others, recently turned out to be useful in the study of profinite rigidity. 
In this article, we will %aim to 
establish a $p$-adic refinement of the twisted Alexander--Fox theory of knot group representations, 
aiming to facilitate the studies in both sides. 

Let $\Delta_K(t)$ denote the classical Alexander polynomial of a knot $K$ in $S^3$. 
Then Fox's formula asserts for each $n\in \Z_{>0}$ 
that the size of the torsion subgroup of $H_1(X_n)$ of the $\Z/n\Z$-covers \[X_n\to X=S^3-K\] is given by the $n$-th cyclic resultant of $\Delta_K(t)$ (cf.~\cite{Weber1979}), 
and the Mahler measure $\mh(\Delta_K(t))$ describes their asymptotic behavior \cite{GS1991}. 
A $p$-adic analogue of the asymptotic formula is given by the Gauss norm $\mh_p(\Delta_K(t))$ \cite{Ueki4} and its refinement is known as the Iwasawa type formula \cite{HillmanMateiMorishita2006, KadokamiMizusawa2008, KadokamiMizusawa2013, Ueki2, Ueki3, Ueki-survey1}. 
In our previous articles \cite{TangeRyoto2018JKTR, Ueki6, Ueki10}, we established Fox's formula and the asymptotic formula using of the Mahler measures for ${\rm GL}_N$-representations of knot groups. 
In this article, we introduce the notion of \emph{the twisted Iwasawa invariants} of knot group representations and investigate them.

The twisted Iwasawa invariants of knots and links are easily calculated from the twisted Alexander polynomials, so our topological interest will focus rather on if such weak invariants convey important information. 
In fact, we derive some results on the profinite rigidity, that has been of high interest in recent days (cf.\cite{BoileauFriedl2020AMS, BridsonReid2020AMS, Reid-ICM2018, BridsonMcReynoldsReidSpitler2020, Wilkes2019Israel, YiLiu2023Duke, YiLiu2023Invent, YiLiu2023Peking}). 

Let $\Zp$ denote the ring of $p$-adic integers defined by $\Zp=\varprojlim_r \Z/p^r\Z$, where $r$ runs through positive integers. An inverse system of $\Z/p^r\Z$-covers is called a $\Zp$-cover. 
Let $F$ be a number field, $S$ a finite set of maximal ideals of the ring $O_F$ of integers, and let $O=O_{F,S}$ denote the ring of $S$-integers, which is the smallest Dedekind subdomain of $F$ containing $O_F$ and inverse elements of $S$. 
For each $i$-th twisted Alexander module associated to a knot group representation $\rho:\pi_K\to {\rm GL}_NO$ with $N\in \Z_{>0}$ 
such that $H_i(X_\infty,\rho)$ is a torsion $O[t^\Z]$-module, where $X_\infty\to X$ denote the $\Z$-cover, 
we have a generalization of Fox's formula and the asymptotic formulas using of the Mahler measures (Theorem \ref{thm.Fox}), as well as the Iwasawa type formula; For each integer $m$ coprime to $p$, 
there exists some $\lambda,\mu,\nu \in \Z$ such that for any integer $r\gg 0$, the equality 
\[||H_i(X_{mp^r},\rho)_{\rm tor}||_p=p^{-(\lambda r+\mu p^r +\nu)}\]
holds (Theorem \ref{thm.Iwasawa}), where the left-hand side is the inverse of the size of the $p$-torsion subgroup of the twisted homology group. These integers $\lambda,\mu,\nu$ are called \emph{the twisted Iwasawa invariants} of the $\Z/m\Z\times \Zp$-cover and $\rho$. 

%For a maximal ideal $\p$ of $O$, let $O_\p$ denote the ring of $\p$-adic integers. 
We remark that for each maximal ideal $\p$ of $O$ satisfying $\p\cap \Z=(p)$, 
the Iwasawa module \[\mca{A}_\p=\varprojlim_r H_i(X_{mp^r},\rho\otimes O_\p)\] of the $\Z/m\Z\times \Zp$-cover decomposes into the direct sum of the modules associated to characters of $\Z/m\Z$. Such a decomposition played an important role in the proof of the Iwasawa main conjecture for cyclotomic $\Zp$-extensions (cf.\cite[pp.291--292]{Washington}). 

In number theory, the Iwasawa $\lambda$-invariants of $\Zp$-fields are analogues of the genera of Riemann surfaces. 
Indeed, the analogy between the Riemann--Hurwitz formula of branched covers of Riemann surfaces and Kida's formula for $p$-power extensions (i.e., finite $p$-extensions) of $\Zp$-fields is pointed out in \cite{Kida1980, Iwasawa1981, Gras1978}. 
In addition, Kida's formula for elliptic curves is given by Hachimori--Matsuno \cite{HachimoriMatsuno1999} 
and that for $p$-power covers of $\Zp$-covers of links is formulated in \cite{Ueki2}. 
Thus, it might be a natural question to ask if there is any relation between the Iwasawa $\lambda$-invariants and the genera of knots. 
%\red{(Note that} on the number theory side the term ``a (finite) $p$-extension'' traditionally stands for a finite extension whose degree is a power of $p$, while on the knot theory side ``a $p$-cover'' stands for a cover of degree $p$. Here, we stick to writing ``$p$-power'' to avoid confusion.) 

In this article, we prove that the set of the twisted Iwasawa invariants of $\Z/m\Z\times \Zp$-covers determine the degree and the monicness of the twisted Alexander polynomial (Theorems \ref{thm.degree}, \ref{thm.monic}), 
invoking some elementary $p$-adic number theory used in \cite{Ueki4, Ueki5, Ueki6}. 
Then deep results due to Friedl--Vidussi and others in \cite{FriedlVidussi2011AnnMath, FriedlVidussi2013JEMS, FriedlVidussi2015Crelle}+\cite{FriedlKim2008,FriedlNagel2015Illinois} yield that the sets of $\lambda$'s and $\mu$'s determine the genus and the fiberedness of any knot respectively in several certain senses (Theorem \ref{thm.fib.genus}). 
%(see Theorem \ref{thm.fib.genus} for the precise statement). 
Since our results also hold for any $\p$-adic representations, Boileau--Friedl's result on the profinite rigidity of the genus and the fiberedness \cite[Theorem 1.2]{BoileauFriedl2020AMS} may be refined via the Iwasawa invariants (Theorem \ref{thm.profinite}). 

The Iwasawa $\mu$-invariant is a $p$-adic analogue of the Mahler measure, so it may be seen as a dynamical invariant, and determining whether the equality $\mu=0$ holds for cyclotomic $\Zp$-extensions has been a leading conjecture or has lead to important theorems (cf.~\cite{Iwasawa1973, FerreroWashington1979}). 
For $\Z$-covers of links, we have a balance formula among $\mu$, the leading term $a_0$ of the Alexander polynomial, and Bowen's $p$-adic entropy $h_p$ of the meridian action on the Alexander module \cite{Ueki4}, where we obviously have $\mu=0$ for knots by $\Delta_K(1)=\pm1$. 

In regard with $\mu$-invariants, we prove the relation of the Iwasawa $\mu$-invariants of $\Zp$-covers and $\Z/m\Z\times \Zp$-covers (Theorem \ref{thm.mu}) and that if the residual representation $\ol{\rho}=\rho \mod p$ satisfies $\Delta_{\ol{\rho},i}(t)\neq 0$, then $\mu=0$ for $\rho$ holds (Theorem \ref{thm.mu=0}). 
Another result of Friedl--Vidussi \cite[Theorem 1.1]{FriedlVidussi2013JEMS} yields that the set of $\mu$'s also determine the fiberedness of a knot. 
More precisely, we prove that $K$ is fibered if and only if $\mu$ is defined and $\mu=0$ holds for every ${\rm GL}_N$-representations (Theorem \ref{thm.fib.genus} (3)). 
For a non-fibered knot $K$, finding $\rho$ with $\mu>0$ is a difficult problem in general, 
though the latest breaking result of Morifuji--Suzuki \cite{MorifujiSuzuki2022IJM} 
finds a representation of $K=9_{35}$ with $\mu=4>0$ for $p=2$. 
In this view, we further establish the $\mu=0$ theorem for ${\rm SL}_2$-representations of twist knot $J(2,2n)$ which is non-fibered if $n\neq 0,\pm1$ by direct calculations (Theorem \ref{thm.twistknot}) and attach some observation on residually reducible irreducible representations (Theorem \ref{thm.resred}). 

Intuitive examples attached are $4_1=J(2,-2)$ and $5_2=J(2,4)$ (Examples \ref{eg.holonomy}, \ref{eg.fig8}, \ref{eg.5_2}).
We remark that most of our argument is applicable to any finitely presented group $\pi$ with a surjective homomorphism to $\Z$. 
At the end of this paper, we paraphrase several famous conjectures into our language and explain how our study could shed new light on arithmetic topology.  

\section{Twisted Alexander polynomials} 
Here, we define the twisted Alexander polynomials of knot group representations and recollect their basic properties. 
Let $K$ be a knot in $S^3$ with the knot group $\pi=\pi_1(S^3-K)$ 
and let $\alpha:\pi\surj t^\Z$ denote the abelianization map sending meridians to a formal generator $t$, 
so that ${\rm Ker}\alpha$ corresponds to the $\Z$-cover $X_\infty\to X=S^3-K$. 
Let $R$ be an integrally closed Noetherian domain and put $\Lambda_R=R[t^\Z]$, so that $\Lambda_R$ is also an integrally closed Noetherian domain. 
For a representation $\rho:\pi \to {\rm GL}_N R$, the $i$-th twisted Alexander module $\mca{A}_{\rho,i}$ stands for one of the following modules, for which Shapiro's lemma and Hopf's theorem yield natural isomorphisms; 
\[H_i(X,\rho\otimes \alpha)\cong H_i(X_\infty,\rho) \cong H_i({\rm Ker}\,\alpha, \rho)\cong H_i(\pi, \rho\otimes \alpha).\] 
By the conjugate action of $t^\Z$-action, $\mca{A}_{\rho,i}$ becomes a finitely generated $\Lambda_R$-module. 

\emph{The divisorial hull} $\wt{\mf{a}}$ of an ideal $\mf{a}$ of an integral domain $\Lambda$ is defined as the intersection of all principal \emph{fractional} ideals containing $\mf{a}$. 
Let $\Lambda_0={\rm fr}\Lambda$ denote the field of fractions of $\Lambda$. 
If we put $[\Lambda:\mf{a}]=\{x\in \Lambda_0\mid x\mf{a}\subset \Lambda\}$, then $\wt{\mf{a}}=[\Lambda:[\Lambda:\mf{a}]]$ holds \cite[$\S$1, Proposotion 1]{MR0260715}. 
\begin{lem}[{\rm \cite[Lemma 0.1]{Ueki10}}]
Let $\Lambda$ be an integrally closed Noetherian domain and suppose that 
$\P$ runs through the set of all height one prime ideals of $\Lambda$, 
so that each localization $\Lambda_\P$ is a DVR and $R=\cap_\P \Lambda_\P$ holds in $\Lambda_0$. 
\begin{itemize}
\item If $\mf{a}$ is an ideal of $\Lambda$, then $\wt{\mf{a}}=\cap_\P \wt{\mf{a}}_\P$ holds. 
\item If $\mf{a}$ is an ideal of $\Lambda$ and $S$ a multiplicative set, then $\wt{\mf{a}_S}=\wt{\mf{a}}_S$ holds.
\item If ideals $\mf{a}, \mf{b}, \mf{c}$ of $\Lambda$ satisfy $\mf{a}_\P=\mf{b}_\P\mf{c}_\P$ for every $\P$, then $\wt{\mf{a}}=\wt{\mf{b}}\wt{\mf{c}}$ holds. 
\end{itemize}
\end{lem}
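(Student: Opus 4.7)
The plan is to exploit the standard fact that an integrally closed Noetherian domain is automatically a Krull domain, so the divisor theory on $\Lambda$ is well behaved: every nonzero fractional ideal $\mf{a}$ has a well-defined divisor $\sum_\P v_\P(\mf{a})\cdot \P$, where $v_\P$ denotes the DVR valuation on $\Lambda_\P$. The master identity I would invoke (derivable from $\wt{\mf{a}}=[\Lambda:[\Lambda:\mf{a}]]$ together with $\Lambda=\cap_\P \Lambda_\P$) is
\[
\wt{\mf{a}} \;=\; \bigcap_\P \mf{a}\Lambda_\P \;\subset\; \Lambda_0,
\]
and I would verify this first as the technical backbone of all three bullets.

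For the first bullet, once the master identity is in hand the proof is immediate: because $\Lambda_\P$ is a DVR, every nonzero fractional ideal there is already principal, hence divisorial, so $\mf{a}\Lambda_\P=\wt{\mf{a}_\P}=\wt{\mf{a}}_\P$. Substituting this into the master identity gives $\wt{\mf{a}}=\bigcap_\P \wt{\mf{a}}_\P$.

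For the second bullet, I would show that the transporter commutes with localization for finitely generated ideals. Since $\Lambda$ is Noetherian, $\mf{a}$ and $[\Lambda:\mf{a}]$ are finitely generated, and a standard argument using a finite generating set of $\mf{a}$ yields $[\Lambda_S:\mf{a}_S]=[\Lambda:\mf{a}]_S$. Iterating once gives $\wt{\mf{a}_S}=[\Lambda_S:[\Lambda_S:\mf{a}_S]]=[\Lambda:\mf{a}]_S\text{-dual}=\wt{\mf{a}}_S$.

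For the third bullet, applying the first bullet reduces the claim to the following local check: in the DVR $\Lambda_\P$ the hypothesis reads $v_\P(\mf{a})=v_\P(\mf{b})+v_\P(\mf{c})$, so the divisors of $\wt{\mf{a}}$ and of the ordinary product $\wt{\mf{b}}\wt{\mf{c}}$ agree at every height-one prime. I expect the main obstacle here to be not the valuation computation but the interpretation of the statement, since a product of two divisorial ideals need not itself be divisorial; the clean reading is as an equality of divisors (equivalently, $\wt{\mf{a}}=\wt{\wt{\mf{b}}\wt{\mf{c}}}$), which together with the master identity and $(\wt{\mf{b}}\wt{\mf{c}})_\P=\wt{\mf{b}}_\P\wt{\mf{c}}_\P=\mf{b}_\P\mf{c}_\P=\mf{a}_\P$ closes the argument. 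Once this convention is fixed, the verification is a two-line consequence of bullets (1) and (2).
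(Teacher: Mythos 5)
Your proposal is essentially correct, and there is no in-paper argument to compare it with: the paper states this lemma as a quotation of \cite[Lemma 0.1]{Ueki10} without proof, and your route through the Krull-domain description $\wt{\mf{a}}=\cap_\P \mf{a}\Lambda_\P$ is the standard one. One structural point should be fixed, though: the nontrivial inclusion $\wt{\mf{a}}\subseteq \mf{a}\Lambda_\P$ of your ``master identity'' is not a formal consequence of $\wt{\mf{a}}=[\Lambda:[\Lambda:\mf{a}]]$ and $\Lambda=\cap_\P\Lambda_\P$ alone (from just those two facts one would be pushed into the approximation theorem for Krull domains); it is exactly your second bullet specialized to $S=\Lambda\setminus\P$, combined with the DVR fact $\wt{\mf{a}_\P}=\mf{a}_\P$. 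So the clean order is: prove the second bullet first (your argument there is fine, since $\Lambda$ Noetherian makes $\mf{a}$ and $[\Lambda:\mf{a}]\subseteq a^{-1}\Lambda$ finitely generated, so colons commute with localization), deduce $\wt{\mf{a}}\subseteq\wt{\mf{a}}_\P=\wt{\mf{a}_\P}=\mf{a}\Lambda_\P$ for every $\P$, and get the reverse inclusion $\cap_\P\mf{a}\Lambda_\P\subseteq\wt{\mf{a}}$ from $x\Lambda=\cap_\P x\Lambda_\P$ for each principal fractional ideal $x\Lambda\supseteq\mf{a}$. With that rearrangement the first and third bullets follow as you say, with no circularity.

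Your caveat on the third bullet is warranted and not merely a matter of convention: with the literal ideal product the displayed equality can fail. For instance, in $\Lambda=k[x,y,z]/(z^2-xy)$ take the height-one prime $\mf{q}=(x,z)$ (which is divisorial), $\mf{b}=\mf{c}=\mf{q}$ and $\mf{a}=(x)$; then $\mf{a}_\P=\mf{b}_\P\mf{c}_\P$ for every height-one $\P$, yet $\wt{\mf{b}}\wt{\mf{c}}=\mf{q}^2=x\,(x,y,z)\subsetneq(x)=\wt{\mf{a}}$. So the conclusion must be read divisorially, i.e.\ $\wt{\mf{a}}=\wt{\wt{\mf{b}}\wt{\mf{c}}}$, exactly as you propose, and your local computation $(\wt{\mf{b}}\wt{\mf{c}})_\P=\mf{b}_\P\mf{c}_\P=\mf{a}_\P$ then closes the argument. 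In the way the lemma is actually used in the paper (divisorial hulls of Fitting ideals that are assumed principal, generated by the twisted Alexander polynomials), $\wt{\mf{b}}$ and $\wt{\mf{c}}$ are principal, the product is already divisorial, and the two readings coincide, so nothing is lost in the application.
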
 

If the divisorial hull of the \emph{initial} Fitting ideal of $\mca{A}_{\rho,i}$ is a principal ideal, then its generator is called \emph{the $i$-th Alexander polynomial} and denoted by $\Delta_{\rho,i}(t) \in \Lambda_R$, 
which coincides with an order element ${\rm ord}_{\Lambda_R}\mca{A}_{\rho,i}$ if $R$ is a UFD. 
The equalities of elements in $\Lambda_R$ up to multiplication by units is denoted by $\dot{=}$\ , so that $\Delta_{\rho,i}(t)$ is defined up to $\dot{=}$\ . We have $\Delta_{\rho,i}(t)\neq 0$ if and only if $\mca{A}_{\rho,i}$ is a torsion $\Lambda_R$-module. 

Since the localization $R_\p$ is a DVR (hence a PID) for each %non-zero 
height 1 prime ideal $\p$ of $R$, previously known results yield the following (cf. \cite{KirkLivingston1999T1, FriedlVidussi2011survey}). 
\begin{prop} \label{lem.Delta} Suppose that $\Delta_{\rho,i}(t)$ is defined for $i=0,1,2$. Then, 
\begin{itemize}
\item We have $\Delta_{\rho,0}(t)\neq 0$ and $\Delta_{\rho,i}(t)\,\dot{=}\,1$ for $i\geq 3$ {\rm \cite[Proposition 2 (1)]{FriedlVidussi2011survey}}. 
\item $\Delta_{\rho,0}(t)$ is monic {\rm \cite[Lemma 4 (1)]{FriedlVidussi2011survey}}.
\item If $\Delta_{\rho,1}(t)\neq 0$, then $\Delta_{\rho,2}(t)\,\dot{=}\,1$ {\rm \cite[Proposition 2 (5)]{FriedlVidussi2011survey}}. 
\item If $\det(I-\rho(g))\in R^\times$ for some $g\in {\rm Ker}\alpha$, then $\Delta_{\rho,0}\,\dot{=}\,1$ holds {\rm(cf. \cite[Lemma 4.11]{Turaev2001book})}. 
\end{itemize} 
\end{prop}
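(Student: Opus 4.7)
My approach is to handle the four assertions uniformly by localizing at height one primes of $\Lambda_R$ and analyzing a small chain complex for $X_\infty$. The divisorial hull lemma recalled above lets me verify each assertion on $\Delta_{\rho,i}$ after passing to $\Lambda_{R,\P}$ for every height one prime $\P$; since $\Lambda_{R,\P}$ is a DVR, Fitting ideals there are principal and I can compare single generators throughout.

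I fix a CW structure on $X=S^3\setminus K$ with a single $0$-cell; since a knot complement collapses onto a $2$-complex, this can be arranged so that $C_i(X)=0$ for $i\geq 3$. Lifting to $X_\infty$ and tensoring with $R^N$ via $\rho$ yields a chain complex of free finitely generated $\Lambda_R$-modules $0\to C_2\to C_1\to C_0\to 0$, which immediately gives $\mca{A}_{\rho,i}=0$, hence $\Delta_{\rho,i}\,\dot{=}\,1$ for $i\geq 3$. Using a Wirtinger-type presentation in which each generator $\mu_j$ maps to $t$ under $\alpha$, the differential $\partial_1$ sends the $j$-th basis vector to $(t\rho(\mu_j)-I)e$, where $e$ spans $C_0=\Lambda_R^N$; in particular $\det(t\rho(\mu_1)-I)$ annihilates $\mca{A}_{\rho,0}$, so $\mca{A}_{\rho,0}$ is $\Lambda_R$-torsion and $\Delta_{\rho,0}\neq 0$. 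A closer look at this annihilator, whose leading coefficient is $\det(\rho(\mu_1))\in R^\times$, combined with a divisibility comparison of $\Delta_{\rho,0}$ with it at each localization, yields that $\Delta_{\rho,0}$ is monic (item 2). For item 4, I observe that $\mca{A}_{\rho,0}=R^N_{\Ker\alpha}$, on which every $g\in\Ker\alpha$ acts as the identity; if $\rho(g)-I$ is invertible on $R^N$ for some such $g$, the coinvariants are forced to be trivial and $\Delta_{\rho,0}\,\dot{=}\,1$ follows.

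For the remaining item 3, I would use the vanishing Euler characteristic of the knot complement, $\chi(X)=0$, which after tensoring with the fraction field of $\Lambda_R$ gives $\sum_i(-1)^i\rank\mca{A}_{\rho,i}=N\chi(X)=0$. Together with the torsion-ness of $\mca{A}_{\rho,0}$ and the hypothesis $\Delta_{\rho,1}\neq 0$, this at least forces $\mca{A}_{\rho,2}$ to be $\Lambda_R$-torsion. Upgrading this to $\Delta_{\rho,2}\,\dot{=}\,1$ requires twisted Poincar\'e--Lefschetz duality $H_i(X_\infty;\rho)\cong H^{3-i}(X_\infty,\der X_\infty;\rho)$ together with a universal coefficient spectral sequence, whose outcome is that $\mca{A}_{\rho,2}$ is pseudo-null and hence has divisorial Fitting hull equal to $\Lambda_R$. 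This duality step, with careful bookkeeping of Fitting ideals across localizations, is what I expect to be the main obstacle of the plan, and it is precisely the content of the cited Friedl--Vidussi argument.
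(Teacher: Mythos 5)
Your proposal takes a genuinely different route from the paper: the paper does not prove these statements at all, but deduces them from the literature (Friedl--Vidussi's survey and Turaev), remarking only that each localization $R_\p$ at a height one prime is a DVR so that the field/PID versions of those results apply; you instead give a self-contained argument from a $2$-dimensional CW model and Fitting-ideal functoriality. Items 1, 2 and 4 of your plan are sound: $C_i=0$ for $i\geq 3$ kills the higher polynomials; $\det(t\rho(\mu_1)-I)$ is a nonzero annihilator of $\mca{A}_{\rho,0}$, so $\Delta_{\rho,0}\neq 0$; since ${\rm Fitt}_0(\mca{A}_{\rho,0})\supseteq\bigl(\det(t\rho(\mu_1)-I)\bigr)$ and the divisorial hull contains the Fitting ideal, $\Delta_{\rho,0}$ divides this monic polynomial globally in $\Lambda_R$, and leading coefficients multiply in a domain, so $\Delta_{\rho,0}$ is monic (you do not even need the localization bookkeeping here); and invertibility of $I-\rho(g)$ for $g\in\Ker\alpha$ does force the $\Ker\alpha$-coinvariants to vanish, giving $\Delta_{\rho,0}\,\dot{=}\,1$. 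What your approach buys is independence from the cited sources and validity verbatim over the integrally closed Noetherian $R$ of the paper; what the paper's citation buys is brevity.

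The one soft spot is item 3, where you defer the decisive step to twisted Poincar\'e--Lefschetz duality plus a universal coefficient spectral sequence and concede it as the main obstacle; over a general integrally closed Noetherian $R$ that machinery needs real care ($\Lambda_R$ need not be regular, so the UCSS bookkeeping is not routine). But your own setup already closes the gap without any duality: with $C_i=0$ for $i\geq 3$, the module $\mca{A}_{\rho,2}=\Ker\partial_2$ is a submodule of the free module $C_2$, hence torsion-free over the domain $\Lambda_R$, while your Euler characteristic count ($\chi(X)=0$, $\mca{A}_{\rho,0}$ torsion by item 1, $\mca{A}_{\rho,1}$ torsion by the hypothesis $\Delta_{\rho,1}(t)\neq 0$) shows it has rank zero; a torsion, torsion-free module is zero, so $\mca{A}_{\rho,2}=0$ and $\Delta_{\rho,2}\,\dot{=}\,1$ follows at once, with no appeal to pseudo-nullity or to the cited Friedl--Vidussi argument.
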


The Reidemeister torsion $\tau_{\rho\otimes \alpha} \in {\rm fr}\Lambda_R/\pm{\rm Im} \det\rho$ is defined with less indeterminacy than $\Delta_{\rho,i}(t)$'s and is calculated as the ratio of the determinants of two square matrices. 
The equality $\tau_{\rho\otimes \alpha} \,\dot{=}\, \Delta_{\rho,1}(t)/\Delta_{\rho,0}(t)$ holds up to multiplication by units in $\Lambda_R$. 

The polynomial $\Delta_{\rho,i}(t) \in R[t^\Z]$ is said to be \emph{monic} if the leading coefficient is a unit of $R$. 
The Reidemeister torsion $\tau_{\rho\otimes \alpha}$ is said to be \emph{monic} if there are polynomials $f(t),g(t)\in R[t^\Z]$ with $\tau_{\rho\otimes \alpha}=f(t)/g(t)$ such that 
the leading coefficients of both $f(t)$ and $g(t)$ belong to the set $\pm{\rm Im} \det\rho$ $\subset R^\times$. 
Now suppose that $\Delta_{\rho,1}(t)\neq 0$. 
By Proposition \ref{lem.Delta}, if $\tau_{\rho\otimes \alpha}$ is monic, then $\Delta_{\rho,1}(t)$ is monic. 
If $\pm{\rm Im} \det\rho = R^\times$ holds (e.g., $R=\Z$), then the converse holds. 

Now suppose that $R$ is the Dedekind domain $O=O_{F,S}$ and put $\ul{S}=\{q \in \Z \mid (q)= \mf{q}\cap \Z {\rm \ for\ some\ }\mf{q}\in S\}$. 
The norm map on the polynomial ring is defined by 
\[{\rm Nr}_{F/\Q}:O[t^\Z]\to \Z_{\ul{S}}[t^\Z];\ \ds f(t)\mapsto \prod_{\sigma:F\inj \C}f(t)^\sigma.\] 
Regard $\mca{A}_{\rho,i}$ as a $\Z_{\ul{S}}[t^\Z]$-module and put %$\Delta^\dagger_{\rho,i}(t)=
$\ul{\Delta}_{\rho,i}(t)={\rm ord}_{\Lambda_{\Z_{\ul{S}}}}\mca{A}_{\rho,i}$. 
If $\Delta_{\rho,i}(t)$ is defined, then $\ul{\Delta}_{\rho,i}(t)={\rm Nr}_{F/\Q}\Delta_{\rho,i}(t)$ holds. 
For a parabolic representation $\rho$ of a two-bridge knot group, $\ul{\Delta}_{\rho,i}(t)$ coincides with $\Delta_{\rho^{\rm tot}}(t)$ of the total representation $\rho^{\rm tot}:\pi\to {\rm GL}_N\Z$ (cf.\cite{SilverWilliams2009TA}). 
The polynomial $\ul{\Delta}_{\rho,i}(t)$ is defined even if $\Delta_{\rho,i}(t)$ is not defined. 
We may assume that $\ul{\Delta}_{\rho,i}(t)$ belongs to $\Z[t^\Z]$. 
We have ${\rm Nr}_{F/\Q}\tau_{\rho\otimes \alpha}(t)\,\dot{=}\,\ul{\Delta}_{\rho,1}(t)/\ul{\Delta}_{\rho,0}(t)$.% in ${\rm fr}\Z[t^\Z]$. 

\section{Twisted cyclic resultants} 
For a polynomial $f(t)\in O[t^\Z]$ and $n\in \Z_{>0}$, the cyclic resultant is defined by 
\[{\rm Res}(t^n-1,f(t))=\prod_{\zeta^n=1}f(\zeta),\] 
which coincides with the determinant of the Sylvester matrix in $M_{n+{\rm deg}f}(O)$ so that ${\rm Res}(t^n-1,f(t))\in O$ holds. 
The order of a finite group $G$ is denoted by $|G|$. The torsion subgroup of an abelian group $G$ is denoted by $G_{\rm tor}$. The $p$-adic absolute value of an integer $mp^r$ with $m,r\in \Z$, $p\nmid m$ is defined by $|mp^r|_p=p^{-r}$. 
Let $\C_p$ denote the $p$-adic completion of an algebraic closure of the $p$-adic number field $\Q_p$ and fix an embedding $\ol{\Q}\inj \C_p$ of an algebraic closure $\ol{\Q}$ of $\Q$. Then the Mahler measure and the Gauss norm of $f(t)\in \Z[t^\Z]$ are defined by 
\[\ds \log \mh(f(t))=\int_{|z|=1}\log |f(z)|\dfrac{dz}{z}, \ \ \log \mh_p(f(t))=\lim_{n\to \infty} \sum_{\zeta^n=1}\dfrac{\log |f(\zeta)|_p}{n},\]
which are naturally interpreted even if $f(t)$ has a root on the unit circle. 
If $f(t)=\sum_i a_i t^i=a_0\prod_i(t-\alpha_i)$, then the Jensen's formula asserts that 
\begin{gather*}
\mh(f(t))=a_0\prod_i {\rm max}\{1,|\alpha_i|\},\\
\mh_p(f(t))=|a_0|_p\prod_i {\rm max}\{1,|\alpha_i|_p\}={\rm max}\{|a_i|_p\}_i
\end{gather*}
hold \cite{Ueki4}. 
In addition, we have the following. 

\begin{thm}[(cf.{\cite[Theorem 11.1]{Ueki10}})] \label{thm.Fox} 
Let $\rho:\pi\to {\rm GL}_NO$ be a knot group representation over $O=O_{F,S}$ 
with $\ul{\Delta}_{\rho,1}(t)\neq 0$. 
Then, notation being as above, the following holds. 

{\rm (1)} For each $n\in \Z_{>0}$, the Wang exact sequence induces a natural isomorphism 
\[\mca{A}_{\rho,0}/(t^n-1)\mca{A}_{\rho,0} \congto H_0(X_n,\rho)\] 
and a natural short exact sequence 
\[0\to \mca{A}_{\rho,1}/(t^n-1)\mca{A}_{\rho,1} \overset{p_{1,n}}{\To} H_1(X_n,\rho) \overset{\partial}{\To} {\rm Ker}(t^n-1|_{H_0(X_\infty,\rho)}) \to 0.\] 
The group ${\rm Ker}(t^n-1|_{H_0(X_\infty,\rho)})$ is a finite group whose size is bounded if $n$ is regarded as a variable, and is trivial if ${\rm Fitt}\mca{A}_{\rho,0}=\wt{{\rm Fitt}}\mca{A}_{\rho,0}$ holds. 

{\rm (2)} Put $\Psi_n(t)={\rm gcd}(t^n-1, \ul{\Delta}_{\rho,1}(t))$ and $r_n={\rm Res}((t^n-1)/\Psi_n(t),\ul{\Delta}_{\rho,1})$. Then 
\[|H_1(X_n,\rho)_{\rm tor}|=c_n k_n|r_n|\prod_{p\in \ul{S}}|r_n|_p\]
holds, where $c_n$ is a bounded sequence defined by $c_n=|(\mca{A}_{\rho,1}/\Psi_n(t)\mca{A}_{\rho,1})_{\rm tor}|$ and $k_n$ is another bounded sequence. 
In addition, for each prime number $p$, the $p$-torsion satisfies \[||H_1(X_n,\rho)_{\rm tor}||_p=|c_n k_n r_n|_p.\]
If ${\rm Fitt}\mca{A}_{\rho,i}=\wt{{\rm Fitt}}\mca{A}_{\rho,i}$ for $i=0,1$, then $(k_n)_n$ is trivial.  

The similar equalities hold for $\Delta_{\rho,0}(t)$ and $|H_0(X_n,\rho)_{\rm tor}|$. If ${\rm Fitt}\mca{A}_{\rho,0}=\wt{{\rm Fitt}}\mca{A}_{\rho,0}$, then $(k_n)_n$ is trivial. 

{\rm (3)} The following asymptotic formulas of Mahler measures hold:  
\[\ds \lim_{n\to \infty}|H_i(X_n,\rho)_{\rm tor}|^{\frac{1}{n}}=\mh(\ul{\Delta}_{\rho,i}(t)), \ 
\ds \lim_{n\to \infty}||H_i(X_n,\rho)_{\rm tor}||_p^{\frac{1}{n}}=\mh_p(\ul{\Delta}_{\rho,i}(t)).\]
\end{thm}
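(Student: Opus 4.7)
The plan is to follow the pattern established in the authors' prior work \cite[Theorem 11.1]{Ueki10}, which handled the case over $\Z$, and adapt the argument to the Dedekind base ring $O=O_{F,S}$. The main engine is the Wang exact sequence, the divisorial-hull / localization toolkit quoted in the lemma above, and the Jensen-type asymptotic for cyclic resultants.

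For part (1), I would invoke the Wang long exact sequence arising from the fact that $X_n$ is the quotient of $X_\infty$ by $t^n$, giving for each $i$ a four-term piece
\[\mca{A}_{\rho,i} \xrightarrow{\, t^n-1 \,} \mca{A}_{\rho,i} \to H_i(X_n,\rho) \to \mca{A}_{\rho,i-1} \xrightarrow{\, t^n-1 \,} \mca{A}_{\rho,i-1}.\]
For $i=0$ there is no lower contribution, giving the claimed isomorphism; for $i=1$ this directly yields the short exact sequence with the stated boundary map. Since Proposition \ref{lem.Delta} guarantees $\Delta_{\rho,0}(t)\ne 0$, the module $\mca{A}_{\rho,0}$ is finitely generated torsion over the Noetherian domain $\Lambda_O$, whence $\ker(t^n-1|_{\mca{A}_{\rho,0}})$ embeds into a finite direct sum of $\Lambda_O/\P^{e}$'s; standard pseudo-isomorphism bounds then give the uniform bound in $n$, and triviality when ${\rm Fitt}\,\mca{A}_{\rho,0}=\wt{{\rm Fitt}}\,\mca{A}_{\rho,0}$ (i.e.\ when no pseudo-null submodule is present).

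For part (2), I would localize at each height-one prime $\P$ of $\Lambda_O$, where $(\Lambda_O)_\P$ is a DVR, and compute each local order, then reassemble by the identity $\wt{\mf{a}}=\cap_\P \wt{\mf{a}}_\P$ from the lemma. Structurally, $\mca{A}_{\rho,1}$ is pseudo-isomorphic to $\bigoplus_i \Lambda_O/\P_i^{e_i}$, with generator $\Delta_{\rho,1}(t)$ of the divisorial Fitting ideal. Splitting $t^n-1 = \Psi_n(t)\cdot (t^n-1)/\Psi_n(t)$ according to whether a local factor divides the characteristic polynomial or is coprime to it, the coprime part contributes exactly the resultant $r_n = {\rm Res}((t^n-1)/\Psi_n(t),\ul{\Delta}_{\rho,1})$, while the coprime part across divisorial/genuine Fitting discrepancy produces the bounded factor $k_n$, and the common-factor part produces the bounded factor $c_n=|(\mca{A}_{\rho,1}/\Psi_n(t)\mca{A}_{\rho,1})_{\rm tor}|$. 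Since $\ul{\Delta}_{\rho,1}\in\Z_{\ul S}[t^\Z]$ by definition, $r_n\in\Z_{\ul S}$, and the formula $|\Z_{\ul S}/r_n\Z_{\ul S}|=|r_n|\prod_{p\in\ul S}|r_n|_p$ (the global product formula restricted to primes outside $\ul S$) gives the first equality. The $p$-local statement $\|H_1(X_n,\rho)_{\rm tor}\|_p=|c_n k_n r_n|_p$ is the $p$-primary part of this decomposition, and the analogous $\mca{A}_{\rho,0}$ version is identical verbatim with $i=0$.

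For part (3), I would apply the classical Jensen asymptotic
\[\lim_{n\to\infty}\frac{1}{n}\log\bigl|{\rm Res}(t^n-1,f(t))\bigr|=\log\mh(f(t))\]
together with its $p$-adic analogue using $\mh_p$, to $f(t)=\ul{\Delta}_{\rho,i}(t)$. Taking $n$-th roots in part (2) kills $c_n^{1/n}, k_n^{1/n}\to 1$ (both bounded) and the $\Psi_n$-adjustment (since $\deg\Psi_n$ is uniformly bounded), and the archimedean product $|r_n|\prod_{p\in\ul S}|r_n|_p$ asymptotically recovers $\mh(\ul{\Delta}_{\rho,i})^n$, yielding the stated limits.

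The main obstacle I foresee is the careful bookkeeping of the two correction sequences $c_n,k_n$ and the distinction between ${\rm Fitt}\,\mca{A}_{\rho,i}$ and its divisorial hull $\wt{{\rm Fitt}}\,\mca{A}_{\rho,i}$, which is precisely the locus of pseudo-null submodules. Over a Dedekind domain $O$ rather than a PID, this bookkeeping requires the height-one localization arguments recalled above to reduce each local contribution to a DVR computation, and then the divisorial-hull identities to re-globalize without loss.
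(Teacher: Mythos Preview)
Your proposal is correct and follows essentially the same approach as the paper: the paper's proof invokes the structure theorem for finitely generated torsion modules over an integrally closed Noetherian domain (giving the pseudo-isomorphism $\varphi_i:H_i(X_\infty,\rho)\overset{\sim}{\to}\bigoplus_j O[t^\Z]/\p_{i,j}^{r_{i,j}}$ with finite kernel and cokernel), writes down the Wang long exact sequence, and then defers the remaining cyclic-resultant and Mahler-measure bookkeeping to \cite[Theorem 11.1]{Ueki10} and \cite[Theorem 3.13]{Hillman2}. Your write-up is in fact more explicit than the paper's sketch---you spell out the height-one localization, the product formula for $|\Z_{\ul S}/r_n\Z_{\ul S}|$, and the role of the $\Psi_n$/coprime splitting---but these are exactly the ``standard arguments'' the paper is gesturing at.
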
 

\begin{proof} The structure theorem of finitely generated torsion modules over an integrally closed Noetherian domain \cite[Theorem 2.36]{Ochiai2014-Iwasawa1} yields a homomorphism 
\[\varphi_i: H_i(X_\infty,\rho)\overset{\sim}{\to} \mca{M}_i=\bigoplus_j O[t^\Z]/\p_{i,j}^{r_{i,j}}\]
to a standard module $\mca{M}_i$ with finite kernel and cokernel, where $(\p_{i,j})_j$ is a finite sequence of height 1 prime ideals of $O[t^\Z]$ and $r_{i,j}\in \Z_{>0}$, 
hence a finite sequence $(\mf{m}_j)_j$ of maximal ideals of $O[t^\Z]$ satisfying  ${\rm Fitt}H_i(X_\infty,\rho)=\wt{\rm Fitt}H_i(X_\infty,\rho)\prod_j \mf{m}_{i,j}$. If $\{\mf{m}_{i,j}\}_j$ is empty, then $\varphi_i$ is an isomorphism. 

The Wang exact sequence yields the following long exact sequence; 
\[\cdots \overset{\partial}{\To} H_1(X_\infty,\rho) \overset{t^n-1}{\To}H_1(X_\infty,\rho) \overset{p_{1,n}}{\To}H_1(X_n,\rho)\ \ \ \ \]
\[\ \ \ \ \overset{\partial}{\To} H_0(X_\infty,\rho) \overset{t^n-1}{\To}H_0(X_\infty,\rho) \overset{p_{0,n}}{\To}H_0(X_n,\rho) \to 0.\]
Now standard arguments of cyclic resultants \cite[Theorem 3.13]{Hillman2} and Mahler measures yield the assertions (see \cite[Theorem 11.1]{Ueki10}). 
\end{proof}

Now let $\p$ be a maximal ideal of $O$ over $p$ and let $O_\p$ denote the ring $\varprojlim O/\p^rO$ of $\p$-adic integers. 
We may assume that $\p \not \in S$, since otherwise the $p$-torsion of an $O$-module is always trivial. 
Define the norm map on $O_\p[t^\Z]$ by 
\[{\rm Nr}_{F_\p/\Qp}:O_\p[t^\Z]\to \Zp[t^\Z];\ \ds f(t)\mapsto \prod_{\sigma:F_\p\inj \C_p}f(t)^\sigma,\] 
and put $\ul{\Delta}_{\rho_\p,i}(t)={\rm Nr}_{F_\p/\Qp} \Delta_{\rho_\p,i}(t) \in \Zp[t^\Z]$. 
Then we obtain the following as well. 

\begin{thm} 
If any $\p$-adic representation $\rho_\p:\pi\to {\rm GL}_N O_\p$ of a knot group is given, then 
the similar assertions to Theorem \ref{thm.Fox} (1), (2), and the equation on $\mh_p$ in (3) hold. 
\end{thm}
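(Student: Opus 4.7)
The plan is to mirror the proof of Theorem~\ref{thm.Fox} almost verbatim, replacing the Dedekind domain $O=O_{F,S}$ with its $\p$-adic completion $O_\p$. The essential input is that $O_\p$ is a complete DVR, hence $\Lambda_{O_\p}=O_\p[t^\Z]$ is an integrally closed Noetherian domain (of Krull dimension two) to which the structure theorem \cite[Theorem 2.36]{Ochiai2014-Iwasawa1} still applies. For $i=0,1$ I would first produce a pseudo-isomorphism
\[
\varphi_i: H_i(X_\infty,\rho_\p)\longrightarrow \mca{M}_i=\bigoplus_j O_\p[t^\Z]/\p_{i,j}^{r_{i,j}}
\]
with finite kernel and cokernel, supported on finitely many maximal ideals $\mf{m}_{i,j}\subset O_\p[t^\Z]$. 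This makes ${\rm Fitt}H_i(X_\infty,\rho_\p)=\wt{\rm Fitt}H_i(X_\infty,\rho_\p)\prod_j\mf{m}_{i,j}$ exactly as before.

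For assertion (1), the Wang long exact sequence is a purely topological/homological consequence of the conjugation action of $t^\Z$ on the $\Z$-cover $X_\infty\to X$ and is insensitive to the coefficient ring; the relevant portion yields the natural isomorphism $\mca{A}_{\rho_\p,0}/(t^n-1)\congto H_0(X_n,\rho_\p)$ and the short exact sequence
\[
0\to \mca{A}_{\rho_\p,1}/(t^n-1)\mca{A}_{\rho_\p,1} \overset{p_{1,n}}{\To} H_1(X_n,\rho_\p) \overset{\partial}{\To} {\rm Ker}(t^n-1|_{H_0(X_\infty,\rho_\p)}) \to 0.
\]
The finiteness and uniform boundedness of the kernel term follow, as before, from the pseudo-isomorphism $\varphi_0$: away from the ideals $\mf{m}_{0,j}$ the element $t^n-1$ acts invertibly on $\mca{M}_0$ for $n$ avoiding a controlled arithmetic progression, and only these finitely many $\mf{m}_{0,j}$ contribute a bounded factor.

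For assertion (2), I would evaluate $||H_i(X_n,\rho_\p)_{\rm tor}||_p$ by passing through $\varphi_i$ and computing the $p$-adic size of each cyclic quotient $O_\p[t^\Z]/(\p_{i,j}^{r_{i,j}},t^n-1)$ via cyclic resultants as in \cite[Theorem 3.13]{Hillman2}. Fixing the embedding $\ol{\Q}\inj \C_p$, the identity
\[
|{\rm Res}(t^n-1,f(t))|_p=\prod_{\zeta^n=1}|f(\zeta)|_p
\]
still holds, so one obtains $||H_i(X_n,\rho_\p)_{\rm tor}||_p=|c_n k_n r_n|_p$ with $c_n=|(\mca{A}_{\rho_\p,i}/\Psi_n(t)\mca{A}_{\rho_\p,i})_{\rm tor}|$ and $k_n$ defined as the bounded correction term coming from $\ker\varphi_i$ and $\operatorname{Coker}\varphi_i$. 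The asymptotic $\lim_{n\to\infty}||H_i(X_n,\rho_\p)_{\rm tor}||_p^{1/n}=\mh_p(\ul{\Delta}_{\rho_\p,i}(t))$ then follows from the $p$-adic Jensen formula $\mh_p(f)=\max_i|a_i|_p$ together with the boundedness of $(c_n k_n)_n$.

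The main obstacle I anticipate is verifying that $c_n$ and $k_n$ remain $p$-adically bounded in the completed setting, i.e., that the finite contributions from $\mf{m}_{i,j}$ and from the kernel/cokernel of $\varphi_i$ do not accumulate $p$-power factors as $n$ varies; this reduces to the standard observation that $t^n-1$ annihilates an $\mf{m}_{i,j}$-module by a bounded amount depending only on the residue characteristic of $\mf{m}_{i,j}$. Once this is in place, the three assertions follow by the same bookkeeping as in \cite[Theorem 11.1]{Ueki10}.
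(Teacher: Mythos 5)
Your proposal is correct and follows essentially the same route as the paper: the paper states this $\p$-adic version without separate proof precisely because the argument for Theorem~\ref{thm.Fox} (structure theorem over the integrally closed Noetherian domain $\Lambda_{O_\p}$, the Wang exact sequence, and the cyclic-resultant/Gauss-norm bookkeeping of \cite[Theorem 11.1]{Ueki10}) carries over verbatim with $O$ replaced by $O_\p$, which is exactly what you do. Your handling of the bounded correction terms $c_n,k_n$ and the restriction to the $\mh_p$ equation in (3) matches the intended argument.
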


Suppose again that $\rho:\pi\to {\rm GL}_N O$ is a knot group representation with $\ul{\Delta}_{\rho,1}(t)\neq 0$. Then for each $p$, the coefficient decomposition $H_i(X_n,\rho\otimes \Zp)\cong \bigoplus_{\p|p} H_1(X_n,\rho\otimes O_\p)$ yields the equality  
\[\ul{\Delta}_{\rho,i}(t)=\prod_{\p|p}{\rm Nr}_{F_\p/\Qp} \Delta_{\rho\otimes O_\p,i}(t)= \prod_{\p|p} \ul{\Delta}_{\rho\otimes O_\p,i}(t)\] in $\Zp[t^\Z]$. Thus the two theorems above are related via $\prod_{\p|p}$. 

\section{Twisted Iwasawa invariants} 
Let $O=O_{F,S}$ and $O_\p$ be as before and let $0\neq m\in \Z$ coprime to $p$. 
We assume $p\not \in \ul{S}$, since if otherwise the $p$-torsion of an $O$-module is always trivial. 
Let $\rho:\pi\to \GL_NO$ be a knot group representation with $\Delta_{\rho,1}(t)\neq 0$. 
Note that for a fixed $m$ we have $(t^{mp^r}-1) \mid (t^{mp^{r+1}}-1)$, so the sequences  $(c_{mp^r})_r$ and $(k_{mp^r})_r$ in Theorem $\ref{thm.Fox}$ (2) are both constants for $r\gg0$. 
In addition, by the natural isomorphism in Theorem $\ref{thm.Fox}$ (1) and 
a standard argument verifying the Mittag-Leffler condition (cf. \cite[Theorem 4.9]{Ueki2}), 
we see that the Iwasawa module defined by \[\mca{A}_{\rho,i,\p}=\varprojlim_r H_i(X_{mp^r},\rho\otimes O_\p)\] is 
a finitely generated torsion $O_\p[[t^{(\Z/m\Z)\times \Zp}]]$-module. 
Now the Iwasawa isomorphism \[O_\p[[t^{\Zp}]]\cong O_\p[[T]]; t\mapsto 1+T,\]  
the character decomposition of the Iwasawa module over 
$O_\p[[t^{(\Z/m\Z)\times \Zp}]]=O_\p[t^{\Z/m\Z}][[t^{\Zp}]]$ \cite[pp.291--292]{Washington}, 
the $p$-adic Weierstrass preparation theorem \cite[Theorem 7.3]{Washington}, 
and a standard argument (cf. \cite[Sections 13.2, 13.3]{Washington} for $O_\p=\Zp$ and $\Psi_n=1$, 
\cite[Theorem 2.45, Remark 2.50]{Ochiai2014-Iwasawa1} for general cases)
together with the decomposition $\ul{\Delta}_{\rho,i}(t)=\prod_{\p|p}\ul{\Delta}_{\rho\otimes O_\p,i}(t)$ 
yield the following assertion.

\begin{thm}[(The Iwasawa type formula)] \label{thm.Iwasawa} 
Let $\rho:\pi\to {\rm GL}_NO$ be a knot group representation with $\Delta_{\rho,1}(t)\neq 0$ 
and let $\lambda, \mu \geq 0$ denote the unique integers satisfying 
\[\prod_{\zeta^m=1} \ul{\Delta}_{\rho, i}(\zeta(1+T))\, \dot{=}\, p^\mu(T^\lambda+p(\text{lower\ terms}))\ {\text in}\ \Zp[[T]].\]
Then there exits an integer $\nu$ such that for any integers $r\gg 0$ the following equality holds:
 \[ ||H_i(X_{mp^r},\rho)||_p=p^{-(\lambda r+\mu p^r +\nu)}.\] 

The assertion persists if $\rho$ is replaced by $\rho_\p:\pi\to {\rm GL}_NO_\p$. 
If $\lambda_\p, \mu_\p, \nu_\p$ denote the invariants of $\rho\otimes O_\p$ for each $\p|p$, then $\lambda=\sum_{\p|p} \lambda_\p$, $\mu=\sum_{\p|p} \mu_\p$, and $\nu=\sum_{\p|p} \nu_\p$ hold. 
\end{thm}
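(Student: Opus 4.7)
The plan is to follow the classical Iwasawa-theoretic argument adapted to this semi-local setting, using the hints already sketched between Theorem \ref{thm.Fox} and the statement. First I would reduce the theorem to the case of a single $\p$-adic representation: since $\rho\otimes \Zp = \bigoplus_{\p\mid p}\rho\otimes O_\p$ gives a natural decomposition $H_i(X_{mp^r},\rho\otimes \Zp)\cong \bigoplus_{\p\mid p}H_i(X_{mp^r},\rho\otimes O_\p)$, the $p$-torsion size is multiplicative over $\p$, so the main equality together with the additive decompositions $\lambda=\sum \lambda_\p$, $\mu=\sum\mu_\p$, $\nu=\sum \nu_\p$ all reduce to proving the formula over each $O_\p$. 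The relation $\ul{\Delta}_{\rho,i}(t)=\prod_{\p\mid p}\ul{\Delta}_{\rho\otimes O_\p,i}(t)$ in $\Zp[t^\Z]$, already recorded at the end of the previous section, then matches the global characteristic element with the product of local ones.

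Second, I would promote the tower structure into Iwasawa-module form. Writing $\Gamma=(\Z/m\Z)\times \Zp$ and $\Lambda_\p=O_\p[[t^\Gamma]]$, the natural isomorphism $\mca{A}_{\rho,i}/(t^n-1)\mca{A}_{\rho,i}\congto H_i(X_n,\rho)$ (up to the bounded kernel appearing in Theorem \ref{thm.Fox}(1)) makes $\mca{A}_{\rho,i,\p}=\varprojlim_r H_i(X_{mp^r},\rho\otimes O_\p)$ a finitely generated torsion $\Lambda_\p$-module once Mittag--Leffler is verified (as cited from \cite{Ueki2}). After adjoining the $m$-th roots of unity to the coefficient ring and using $O_\p[[t^\Gamma]]=O_\p[t^{\Z/m\Z}][[t^{\Zp}]]$, I would invoke the character decomposition $\mca{A}_{\rho,i,\p}=\bigoplus_\chi \mca{A}_{\rho,i,\p}^{(\chi)}$ over characters $\chi$ of $\Z/m\Z$ following \cite[pp.291--292]{Washington}. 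The Iwasawa isomorphism $t\mapsto 1+T$ then identifies each summand with a finitely generated torsion $O_\p(\chi)[[T]]$-module, and the $p$-adic Weierstrass preparation theorem writes its characteristic element as $p^{\mu_\chi}P_\chi(T)$ with $P_\chi$ distinguished of degree $\lambda_\chi$. Evaluating at $\chi(t)=\zeta$ matches the factor $\ul{\Delta}_{\rho,i}(\zeta(1+T))$ in the product over $\zeta^m=1$, so that $\lambda_\p=\sum_\chi \lambda_\chi$ and $\mu_\p=\sum_\chi \mu_\chi$ are exactly the invariants read off from the displayed normalization.

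Third, I would apply the standard Iwasawa computation (cf.~\cite[\S\S13.2--13.3]{Washington} for the $O_\p=\Zp$, $\Psi_n=1$ case and \cite[Theorem 2.45, Remark 2.50]{Ochiai2014-Iwasawa1} for the general coefficient ring) to each $\chi$-component. For $r\gg 0$ one obtains $||\mca{A}_{\rho,i,\p}^{(\chi)}/\omega_r\mca{A}_{\rho,i,\p}^{(\chi)}||_p=p^{-(\lambda_\chi r+\mu_\chi p^r+\nu_\chi)}$ where $\omega_r=(1+T)^{p^r}-1$ corresponds to $t^{p^r}-1$. Summing over $\chi$ yields the single-$\p$ formula, and the bounded discrepancy between $\mca{A}_{\rho,i,\p}/(t^{mp^r}-1)$ and $H_i(X_{mp^r},\rho\otimes O_\p)$ coming from Theorem \ref{thm.Fox}(1), together with the bounded error from the character-descent when $\mu_m\not\subset O_\p$, are both absorbed into the constant $\nu_\p$. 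Reassembling via $\prod_{\p\mid p}$ delivers the global statement. The main obstacle I expect is the bookkeeping in the last paragraph: confirming Mittag--Leffler for the inverse system, and checking that the bounded sequences $c_{mp^r}$, $k_{mp^r}$ from Theorem \ref{thm.Fox}(2) together with the descent error truly stabilize for $r\gg 0$, so that $\nu$ is well defined as an honest integer rather than merely a bounded remainder. Once these stabilizations are in place, the formula matches the displayed characteristic product verbatim, and the replacement of $\rho$ by $\rho_\p$ is immediate since the whole argument was carried out $\p$-locally.
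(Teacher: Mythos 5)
Your proposal is correct and follows essentially the same route as the paper, which likewise reduces to each $\p$ via $\ul{\Delta}_{\rho,i}(t)=\prod_{\p\mid p}\ul{\Delta}_{\rho\otimes O_\p,i}(t)$, builds the Iwasawa module by Mittag--Leffler, and applies the Iwasawa isomorphism, the character decomposition over $\Z/m\Z$, $p$-adic Weierstrass preparation, and the standard Washington/Ochiai computation. The stabilization you flag as the remaining bookkeeping is handled in the paper by the observation that $(t^{mp^r}-1)\mid(t^{mp^{r+1}}-1)$ forces the bounded sequences $(c_{mp^r})_r$ and $(k_{mp^r})_r$ of Theorem \ref{thm.Fox}~(2) to become constant for $r\gg 0$, so their contribution is absorbed into $\nu$ exactly as you intend.
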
 

We call these $\lambda,\mu,\nu$ the $i$-th Iwasawa invariants of the representation $\rho$ and the $\Z/m\Z\times \Zp$-cover. We write $\lambda=\lambda_p=\lambda_i=\lambda_{p,i}$ if we needed to clarify. 
Now recall that the Reidemeister torsion satisfies 
$\tau_{\rho\otimes \alpha}(t)=\Delta_{\rho,1}(t)/\Delta_{\rho,0}(t)$. 
Again by the $p$-adic Weierstrass preparation theorem, we have 
\[\prod_{\zeta^n=1}{\rm Nr}_{F/\Q}\tau_{\rho\otimes \alpha}(\zeta(1+T))\,\dot{=}\,p^{\mu_\tau} (T^{\lambda_\tau}+p(\text{lower\ terms}))\] in $\Zp[[T]]$ 
for the integers $\mu_\tau=\mu_1-\mu_0$, $\lambda_\tau=\lambda_1-\lambda_0$. We further put $\nu_\tau=\nu_1-\nu_0$ and call them the Reidemeister--Iwasawa invariants. They essentially convey topological information. 
We remark that $\mu$ and $\lambda$ of $\rho$ depend on $p$, since $\mu$ is the $p$-exponent of the polynomials in both $t$ and $T$, and $\lambda$ counts the roots of the polynomial in $t$ on the $p$-adic unit disc $\{z\in \C_p\mid |z-1|_p\leq 1\}$. 

If $\rho:\pi\to {\rm GL}_NO$ is given and the residual representation $\ol{\rho}=\rho\mod \p:\pi\to {\rm GL}_N(O/\p)$ is irreducible and non-abelian, then 
we have $\Delta_{\rho, 0}(t)$ $\dot{=}$ $1$ and $\Delta_{\rho,1}(t)$ $\dot{=}$ $ \tau_{\rho\otimes\alpha}(t)$ in $\Lambda_{O_\p}=O_\p[t^\Z]$ by \cite[Corollary 3]{TangeRyoto2018JKTR}, so that 
$\mu_0=\lambda_0=0$, $\mu_\tau=\mu_1$, and $\lambda_\tau=\lambda_1$ hold.

\begin{eg} \label{eg.holonomy} 
Let $K=4_1$ (the figure-eight knot) in $S^3$ and consider the two lifts $\rho_{\pm}:\pi\to \GL_2 \C$ of the holonomy representation, 
which may be regarded as representations over the ring $O=\Z[\frac{1+\sqrt{-3}}{2}]$ of integers of a quadratic field. 
Then we have $\tau_{\rho_{\pm}\otimes \alpha}(t)=\Delta_{\rho_\pm,1}(t)=t^2\pm4t+1$. 
For $\rho_+$, we have $\Delta_{\rho_+,1}(1+T)=T^2+6T+6$, and hence $\lambda_2/2=\lambda_3/2=2$, $\lambda_p=0$ $(p\neq 2,3)$. 
For $\rho_-$, we have $\Delta_{\rho_-,1}(1+T)=T^2-2T+2$, and hence $\lambda_2/2=2$, $\lambda_p=0$ $(p\neq 2)$. 
In both cases, we have $\mu=0$ for any $p$. 
Since each lift corresponds to a spin structure, we may say that the Iwasawa $\lambda$-invariants of the $\Zp$-covers distinguish the spin structures of $K=4_1$. 
\end{eg}

\section{The degree and the $\lambda$-invarians}%$\Z/m\Z\times \Zp$-covers} 
Here we investigate the relation between the degree of the Alexander polynomial and the Iwasawa $\lambda$-invariants.
A Laurent polynomial $f(t)\in R[t^\Z]$ is said to be of degree $d$ if $f(t)=\sum_{j\leq i\leq j+d} a_i t^i$ with $a_j,a_{j+d}\neq 0$.

For a fixed $m\in \Z$ with $p\nmid m$, \emph{the $\Z/m\Z\times \Zp$-cover} of a knot is the compatible system ($X_{mp^r}\to X)_r$ of the $\Z/mp^r\Z\cong \Z/m\Z\times \Z/p^r\Z$-covers obtained from the $\Z$-cover via the natural surjective homomorphism $\Z\surj \Z/mp^r$.  
In what follows, we always assume that the $i$-th Alexander polynomial of a given representation is defined. 
The relation between $\lambda$'s and the degrees of the polynomials is given by the following assertion: 

\begin{thm} \label{thm.degree} 
Let $\rho:\pi\to {\rm GL}_NO$ be a representation with $\Delta_{\rho,1}(t)\neq 0$. Then for any $m$ with $p\nmid m$, the $\lambda$-invariant of $\rho$ and $\Z/m\Z\times \Zp$-cover satisfies the inequality 
\[\lambda_i \leq \deg \ul{\Delta}_{\rho,i}(t).\] 
For all but finitely many $p$, there exists some $0\neq m' \in \Z$ with $p\nmid m'$ such that for any $m\in \Z$ with $p\nmid m$ and $m'|m$, the equality $\lambda={\rm deg}(\ul{\Delta}_{\rho,i}(t))$ holds. 

If instead $\rho_\p:\pi\to {\rm GL}_NO_\p$ is given, then $\lambda_i \leq \deg \ul{\Delta}_{\rho_\p,i}(t)$ holds. 
\end{thm}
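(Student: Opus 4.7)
\emph{Proof plan.} The approach is to reinterpret the $\lambda$-invariant via the $p$-adic Weierstrass preparation theorem as a count of zeros of an explicit polynomial in the open $p$-adic unit disk of $\Cp$, and then bound this count against the degree of $\ul{\Delta}_{\rho,i}(t)$ by exploiting the injectivity of reduction on $m$-th roots of unity when $p\nmid m$.

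Set $f(T):=\prod_{\zeta^m=1}\ul{\Delta}_{\rho,i}(\zeta(1+T))\in\Zp[T]$. By Theorem \ref{thm.Iwasawa} together with the $p$-adic Weierstrass preparation theorem, $\lambda$ equals the number of zeros of $f(T)$ in the open unit disk $D:=\{T\in\Cp \mid |T|_p<1\}$, counted with multiplicity. Factoring $\ul{\Delta}_{\rho,i}(t)=a\prod_{j=1}^d(t-t_j)$ in $\ol{\Q}[t]$ with $d=\deg\ul{\Delta}_{\rho,i}(t)$, each zero of $f(T)$ in $\Cp$ has the form $T_{j,\zeta}=t_j/\zeta-1$ for some $j$ and some $\zeta$ with $\zeta^m=1$. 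Since $|\zeta|_p=1$, the condition $|T_{j,\zeta}|_p<1$ is equivalent to $|t_j-\zeta|_p<1$, which in turn says that $t_j$ and $\zeta$ have the same image in the residue field $\ol{\F_p}$ of the valuation ring of $\Cp$. Because $p\nmid m$, reduction embeds the group of $m$-th roots of unity injectively into $\ol{\F_p}^\times$, so for each $j$ at most one $\zeta$ qualifies. Summing with multiplicities yields $\lambda\leq d=\deg\ul{\Delta}_{\rho,i}(t)$.

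For the equality, observe that each algebraic number $t_j\in\ol{\Q}^\times$ has nonzero $p$-adic valuation for only finitely many $p$, so for all but finitely many $p$ every $t_j$ is a $p$-adic unit. For any such $p$ the reduction $\ol{t_j}\in\ol{\F_p}^\times$ is a root of unity of some order $n_j$, automatically coprime to $p$. Set $m':=\mathrm{lcm}(n_1,\ldots,n_d)$, so $p\nmid m'$. Then for any $m$ with $m'\mid m$ and $p\nmid m$, each $\ol{t_j}$ is an $m$-th root of unity, producing a zero in $D$ for every $j$ and hence $\lambda=d$. The $\p$-adic case runs identically, giving only the inequality $\lambda_i\leq\deg\ul{\Delta}_{\rho_\p,i}(t)$ since with $p$ fixed we cannot invoke algebraicity over $\Q$ to discard bad primes.

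The main delicate step is tracking multiplicities correctly when substituting $t\mapsto\zeta(1+T)$ and taking the $m$-fold product, as well as verifying that $u(T)$ in the Weierstrass factorization contributes no zeros in $D$; once this bookkeeping is in place, the residue-field counting is essentially routine.
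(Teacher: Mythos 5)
Your argument is correct and is essentially the paper's own proof: both pass through the $p$-adic Weierstrass preparation theorem to read $\lambda$ as the number of zeros of $\prod_{\zeta^m=1}\ul{\Delta}_{\rho,i}(\zeta(1+T))$ in the open $p$-adic unit disc, and both use the residue-field (Teichm\"uller) correspondence between unit-circle roots and prime-to-$p$ roots of unity to bound this count by $\deg\ul{\Delta}_{\rho,i}(t)$ and to force equality for suitable $m'\mid m$ once all roots are $p$-adic units (which excludes only finitely many $p$ by algebraicity, and is exactly why only the inequality survives in the $O_\p$ case). Your bookkeeping is in fact slightly cleaner than the paper's (taking $m'$ to be the least common multiple of the orders of the reductions, rather than the paper's ``gcd'', and handling roots off the unit circle uniformly via $|t_j/\zeta-1|_p$ instead of a case split on the leading coefficient), so no gaps remain.
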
 

\begin{proof} Put $\Delta(t)=\ul{\Delta}_{\rho,i}(t)$. Take an algebraic closure $\ol{\Q}$ of $\Q$. For each prime number $p$, let $\C_p$ denote the completion of an algebraic closure of $\Qp$ and fix an embedding $\ol{\Q}\inj \C_p$. 
Recall that by the $p$-adic Weierstrass preparation theorem and the strong triangle inequality, the image of $\Delta(t)$ via \[\Zp[t^\Z]\to \Zp[[T]]; t\mapsto 1+T\] only knows the roots of $\Delta(t)$ on the $p$-adic unit disc $|z-1|_p\leq 1$. 
Let $g$ denote the gcd of the coefficients of $\Delta(t)$. 
If the leading coefficient of $\Delta(t)/g$ is not divisible by $p$, then all roots lie on the $p$-adic unit circle $|z|_p=1$ in $\C_p$. For each $\alpha \in \ol{\Q}$ with $|\alpha|_p=1$, there exists a unique $p$-prime root $\zeta$ of unity in $\ol{\Q}$ such that $|\alpha-\zeta|_p<1$ holds. 
Write $\Delta(t)=\prod_j (t-\alpha_j)$ and suppose that each $\alpha_j$ corresponds to a primitive $m'_j$-th roots of unity. 
Since the polynomial of $\Z/m\Z\times \Zp$-cover is given by $\prod_{\zeta^m=1}\Delta(\zeta t)$, by putting $m'={\rm gcd}\{m'_j\}_j$, we obtain $\lambda=\deg \Delta(t)$ for any $m$ with $m'|m$. 
If the leading coefficient of $\Delta(t)/g$ is divisible by $p$, then $\Delta(t)$ has divisors $t-\alpha$ with $|\alpha|_p>1$, 
hence so does every $\Delta(t\zeta)$, while other roots behave as in the previous case. Thus we obtain the first assertion. 
The second assertion is proved by the same argument. 
\end{proof} 

Note that in the statement of Theorem \ref{thm.degree} the term $\deg \ul{\Delta}_{\rho,i}(t)$ appears, but not the term $\deg \prod_{\zeta^m=1}\ul{\Delta}_{\rho,i}(\zeta t)$.  
%there in Theorem \ref{thm.degree} appears $\deg \ul{\Delta}_{\rho,i}(t)$, but not does $\deg \prod_{\zeta^m=1}\ul{\Delta}_{\rho,i}(\zeta t)$. 
We refer to the previous article \cite{Ueki4} for all basic facts of $p$-adic numbers used in the proof above. 
A similar method was used in \cite{Ueki6} to investigate the profinite rigidity of twisted Alexander polynomials.

If we put $N=1$ and $\rho=1$, then $\Delta_{\rho,1}(t)=\Delta_K(t)$ is the classical Alexander polynomial. 
It is classically known that if $K$ is a fibered knot, then $\Delta_K(t)$ is monic and its degree coincides with twice the degree, yielding the following. 

\begin{thm} \label{thm.fiberedknot} 
If $K$ is a fibered knot, then the Iwasawa $\lambda$-invariants of $\Z/m\Z\times \Zp$-covers with $i=1$ and $\rho=1$ determines the genus $g(K)$, that is, we have $\lambda_1\leq 2g(K)$ and there is some $m'$ such that $m'|m$ implies the equality. 
\end{thm}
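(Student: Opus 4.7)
The plan is to reduce Theorem \ref{thm.fiberedknot} to Theorem \ref{thm.degree} using the classical theorem of Neuwirth--Stallings. For $N=1$ and trivial $\rho$, the module $\mca{A}_{\rho,1}$ is the classical Alexander module, so $\ul{\Delta}_{\rho,1}(t) \,\dot{=}\, \Delta_K(t)$. Fiberedness of $K$ forces $\Delta_K(t)$ to be monic of degree exactly $2g(K)$. Substituting into the inequality $\lambda_1 \leq \deg \ul{\Delta}_{\rho,1}(t)$ from Theorem \ref{thm.degree} immediately yields $\lambda_1 \leq 2g(K)$.

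For the equality statement, I would invoke the second half of Theorem \ref{thm.degree}. The key observation is that monicness of $\Delta_K(t)$ combined with the palindromic symmetry $\Delta_K(t^{-1}) \,\dot{=}\, \Delta_K(t)$ forces both the leading coefficient and the constant term of $\Delta_K(t)$ to be $\pm 1$, hence $p$-adic units for \emph{every} prime $p$. A Newton polygon argument then shows that every root of $\Delta_K(t)$ in $\C_p$ lies exactly on the $p$-adic unit circle $|z|_p = 1$, so the ``all but finitely many'' clause in Theorem \ref{thm.degree} is vacuous in this situation. Following the proof of Theorem \ref{thm.degree}, each root $\alpha_j$ is $p$-adically close to a unique $p$-prime root of unity $\zeta_{m'_j}$, and taking $m'$ to be the least common multiple of the orders $m'_j$ yields $\lambda_1 = \deg \Delta_K(t) = 2g(K)$ whenever $m' \mid m$ and $p \nmid m$.

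The main subtlety --- and essentially the only step not already packaged in Theorem \ref{thm.degree} --- is verifying that the monic and palindromic nature of $\Delta_K(t)$ eliminates the finitely many exceptional primes allowed in the general statement. Apart from this observation, the proof is a direct specialization of Theorem \ref{thm.degree}, so I do not anticipate any genuine obstacle.
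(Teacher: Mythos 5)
Your proposal is correct and follows essentially the same route as the paper, which derives the statement in one step from Theorem \ref{thm.degree} together with the classical fact that a fibered knot has monic Alexander polynomial of degree $2g(K)$. Your added observation that monicness plus the palindromic symmetry makes the leading and constant coefficients units for \emph{every} $p$, so the ``all but finitely many $p$'' caveat of Theorem \ref{thm.degree} disappears, is a useful clarification the paper leaves implicit (and your choice of $m'$ as the least common multiple of the orders $m'_j$ is the right one, whereas the proof of Theorem \ref{thm.degree} writes their gcd, apparently a slip).
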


\begin{eg} \label{eg.fig8} 
Consider the $\Z/m\Z\times \Zp$-covers of the figure-eight knot $K=4_1$, which is a hyperbolic fibered knot with genus 1. 

If $m=1$, then $\Delta_K(t)=t^2-3t+1$, $\Delta_K(1+T)=T^2-T-1$ $\dot{=}$ $1$ in $\Zp[[T]]$ and $\lambda_p=0$ for any $p$.

If $m=2$, then 
\begin{align*}
\prod_{\zeta^2=1}\Delta_K(\zeta t) 
=(t^2-3t+1)(t^2+3t+1)
\overset{t\mapsto T+1}{=}(T^2-T-1)(T^2+5T+5)
\,\,\dot{=}\,\,T^2+5T+5
\end{align*}  
in $\Zp[[T]]$, $\lambda_5=2$, $\lambda_p=0$ ($p\neq 5$). 

If $m=4$, then 
\begin{align*}
\prod_{\zeta^4=1}\Delta_K(\zeta t)&=(t^2-3t+1)(t^2+3t+1)(t^4+7t^2+1) \\ 
& \hspace{-10mm} \overset{t\mapsto T+1}{=} (T^2-T-1)(T^2+5T+5)(T^4 + 4 T^3 + 13 T^2 + 18 T + 9)\,\,\dot{=}\,\,T^2+5T+5
\end{align*} 
in $\Zp[[T]]$, 
$\lambda_5=2$, $\lambda_p=0$ ($p\neq 5$). 
\end{eg}

\section{Monicness} 
On the Iwasawa $\mu$-invariants and the monicness of the polynomials, we have the following. 

\begin{thm} \label{thm.mu} Let $\rho$ or $\rho_\p$ be as in Theorem \ref{thm.degree} and put $\ul{\Delta}(t)=\ul{\Delta}_{\rho,i}(t)$ or $\ul{\Delta}_{\rho_\p,i}(t)$. Then for a $\Z/m\Z\times \Zp$-cover, 
\[p^{-\mu}=\mh_p(\prod_{\zeta^m=1} \ul{\Delta}(\zeta t))=\mh_p(\ul{\Delta}(t))^m\] holds. 
Let $\mu'$ denote the $\mu$-invariant of the $\Zp$-cover. Then $\mu=m\mu'$ holds, 
so that $\mu=0$ is equivalent to $\mu'=0$.  
\end{thm}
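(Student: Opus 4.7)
The plan is to reduce both sides of the first equality to the $p$-adic Gauss norm and exploit its multiplicativity. Set $h(t) := \prod_{\zeta^m=1} \ul{\Delta}(\zeta t) \in \Zp[t^\Z]$; by Galois symmetry its coefficients indeed lie in $\Zp$. By the $p$-adic Weierstrass preparation theorem, the exponent $\mu$ defined in Theorem~\ref{thm.Iwasawa} coincides with the smallest $p$-adic valuation among the coefficients of $h(1+T) \in \Zp[[T]]$, so that $p^{-\mu} = \mh_p(h(1+T))$. The identity $\mh_p(h(t)) = \mh_p(\ul{\Delta}(t))^m$ will follow from multiplicativity of the Gauss norm (Gauss's lemma), together with the observation that $|\zeta|_p = 1$ for every $m$-th root of unity when $p \nmid m$, which gives $\mh_p(\ul{\Delta}(\zeta t)) = \mh_p(\ul{\Delta}(t))$. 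Thus the crux is to establish $\mh_p(h(1+T)) = \mh_p(h(t))$.

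The hard part will be verifying that the substitution $t \mapsto 1+T$ preserves $p$-adic valuations of Laurent polynomials in $\Zp[t^\Z]$. The inequality $v_p(h(1+T)) \geq v_p(h(t))$ is immediate from the ring homomorphism $\Zp[t^\Z] \to \Zp[[T]]$. For the reverse, it suffices to show that the induced map $(\Z/p^n\Z)[t^\Z] \to (\Z/p^n\Z)[[T]]$ sending $t \mapsto 1+T$ is injective for every $n \geq 1$. Over $\F_p$, this reduces to the transcendence of $1+T$ over $\F_p$ in $\F_p[[T]]$: any nontrivial relation $\sum a_i (1+T)^i = 0$ with $a_i \in \F_p$ would have a nonzero leading coefficient in the $T$-expansion, a contradiction. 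The general $\Z/p^n\Z$ case then follows by a standard induction: if $\sum b_k (1+T)^k \equiv 0 \pmod{p^n}$ with some $b_k \notin p^n\Zp$, divide through by the largest common power of $p$ dividing all the $b_k$ to obtain a relation mod $p$ with not all coefficients vanishing, contradicting the $\F_p$-injectivity.

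With this key lemma in hand, the first assertion is complete. For the second, specialize to $m = 1$ to obtain $p^{-\mu'} = \mh_p(\ul{\Delta}(t))$, whence $p^{-\mu} = \mh_p(\ul{\Delta}(t))^m = p^{-m\mu'}$, giving $\mu = m\mu'$. Since $m \neq 0$, this immediately yields $\mu = 0 \iff \mu' = 0$. The argument proceeds verbatim with $\rho_\p$ in place of $\rho$, as only the ambient coefficient ring $\Zp$ (after applying $\mathrm{Nr}_{F_\p/\Qp}$) enters the proof.
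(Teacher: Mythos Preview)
Your proof is correct, but it takes a genuinely different route from the paper's. The paper's argument is a one-line comparison of asymptotic growth rates: Theorem~\ref{thm.Fox}(3) gives $\lim_{n\to\infty}||H_i(X_n,\rho)_{\rm tor}||_p^{1/n}=\mh_p(\ul{\Delta}(t))$, while Theorem~\ref{thm.Iwasawa} gives $||H_i(X_{mp^r},\rho)||_p=p^{-(\lambda r+\mu p^r+\nu)}$. Taking the limit along $n=mp^r$ yields $p^{-\mu/m}=\mh_p(\ul{\Delta}(t))$ immediately; the middle expression then follows from multiplicativity of $\mh_p$ and $\mh_p(\ul{\Delta}(\zeta t))=\mh_p(\ul{\Delta}(t))$, which you also use.

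Your approach instead works entirely at the level of coefficient valuations, proving directly that the substitution $t\mapsto 1+T$ preserves the Gauss norm via injectivity of $(\Z/p^n\Z)[t^\Z]\to(\Z/p^n\Z)[[T]]$. This is more self-contained: it does not invoke the asymptotic formulas, only the Weierstrass preparation theorem and Gauss's lemma. The paper's route is shorter precisely because it leverages machinery already established earlier, whereas yours would stand on its own even without Theorem~\ref{thm.Fox}(3). One small point: you implicitly extend $\mh_p$ from Laurent polynomials to power series as the supremum of $|a_i|_p$; this is harmless but worth making explicit, and in your injectivity argument over $\F_p$ you should clear negative powers of $1+T$ before reading off the leading $T$-coefficient.
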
 

\begin{proof} Noting $\mh_p(\ul{\Delta}(t))=\mh_p(\ul{\Delta}(\zeta t))$ for each $\zeta$ with $\zeta^m=1$, 
the comparison between the latter half of Theorem \ref{thm.Fox} (3) and Theorem \ref{thm.Iwasawa} yields the assertion. 
\end{proof} 

\begin{thm} \label{thm.monic} 
%If we regard $\mu$ and $\lambda$ as functions of indeterminacy $p$ and $m$, then they determine whether $\Delta_{\rho,i}(t) \in O[t^\Z]$ is monic in the following sense; 
If $p$ and $m$ are regarded as indeterminacies, then the set of $\mu$'s and $\lambda$'s determine whether $\Delta_{\rho,i}(t) \in O[t^\Z]$ is monic in the following sense; 
%If $p$ and $m$ move, then $\mu$'s and $\lambda$'s determine whether $\Delta_{\rho,i}(t) \in O[t^\Z]$ is monic, that is, 
$\mu_p=0$ holds for all $p$ and the sequence $({\rm max}\,\{\lambda_{p,m}\mid m\}\,)_p$ is a constant if and only if $\Delta_{\rho,i}(t)$ is monic. 
\end{thm}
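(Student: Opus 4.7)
The plan is to translate the two hypotheses separately into conditions on the Laurent polynomial $\ul{\Delta}:=\ul{\Delta}_{\rho,i}(t)\in\Z[t^\Z]$ and then combine them. Write $\ul{\Delta}=\sum_i a_i t^i$, put $d=\deg\ul{\Delta}$, and let $c$ denote the leading coefficient. First I would reduce monicness of $\Delta_{\rho,i}(t)\in O[t^\Z]$ to monicness of $\ul{\Delta}$ in $\Z[t^\Z]$: the leading coefficient of $\ul{\Delta}=\mathrm{Nr}_{F/\Q}(\Delta_{\rho,i})$ is the norm of the leading coefficient of $\Delta_{\rho,i}$, and an element of $O=O_{F,S}$ is a unit precisely when its absolute norm equals $\pm 1$.

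The hypothesis ``$\mu_p=0$ for every $p$'' is then handled via Theorem~\ref{thm.mu} together with the formula $\mh_p(f)=\max_i|a_i|_p$ recalled just before Theorem~\ref{thm.Fox}: one has $p^{-\mu_{p,m}}=\mh_p(\ul{\Delta})^m$, so ``$\mu_p=0$ for every $p$'' is equivalent to $\mh_p(\ul{\Delta})=1$ for every $p$, hence to $\gcd_i a_i=\pm 1$, i.e.\ to primitivity of $\ul{\Delta}$.

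Assuming $\ul{\Delta}$ primitive, I would reuse the argument inside the proof of Theorem~\ref{thm.degree} to set up a dichotomy. A root $\alpha\in\ol{\Q}$ of $\ul{\Delta}$ with $|\alpha|_p=1$ can be approximated in $\C_p$ by $p$-prime roots of unity, so it is captured by a suitable $\Z/m\Z\times\Zp$-cover and contributes to $\lambda_{p,m}$ once $m$ is large enough; a root with $|\alpha|_p>1$ satisfies $|\alpha/\zeta-1|_p=|\alpha-\zeta|_p=|\alpha|_p>1$ for every $\zeta$ with $\zeta^m=1$ (strong triangle inequality and $|\zeta|_p=1$), so it never lies in the $p$-adic disc $|z-1|_p\leq 1$ and contributes nothing. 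Under primitivity, ``$|\alpha|_p=1$ for every root'' is the same as $p\nmid c$, yielding the dichotomy
\begin{align*}
 p\nmid c\;&\Longrightarrow\;\max_{m}\lambda_{p,m}=d,\\
 p\mid c\;&\Longrightarrow\;\max_{m}\lambda_{p,m}<d.
\end{align*}
Since Theorem~\ref{thm.degree} already gives $\max_{m}\lambda_{p,m}=d$ for all but finitely many $p$, constancy of $(\max_{m}\lambda_{p,m})_p$ forces the common value to be $d$, and therefore $c=\pm 1$; conversely, if $\ul{\Delta}$ is monic then both conditions are immediate from the same dichotomy.

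The step I expect to be most delicate is the $p$-adic bookkeeping in the dichotomy above: one must check that no twist $t\mapsto\zeta t$ with $\zeta^m=1$ can drag a root lying outside the $p$-adic unit circle into the disc $|z-1|_p\leq 1$, and conversely that every root on the unit circle is genuinely captured by some finite $m$ coprime to $p$. Both follow from the strong triangle inequality and from the density of $p$-prime roots of unity (in $\C_p$) on the $p$-adic unit circle, but they are the technical core that powers the equivalence; once the dichotomy is in place, assembling Theorems~\ref{thm.mu} and~\ref{thm.degree} finishes the proof.
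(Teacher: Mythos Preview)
Your proposal is correct and follows essentially the same route as the paper: reduce monicness of $\Delta_{\rho,i}(t)$ to that of $\ul{\Delta}_{\rho,i}(t)$, read off the content $g=\prod_p p^{\mu_p}$ from the $\mu$-invariants via Theorem~\ref{thm.mu}, and then use the dichotomy from the proof of Theorem~\ref{thm.degree} (roots with $|\alpha|_p=1$ are eventually captured by some $m$, roots with $|\alpha|_p>1$ never are) to see that $\max_m\lambda_{p,m}=d$ for all $p$ precisely when the leading coefficient is a unit. One small imprecision: your claim that $a\in O_{F,S}^\times$ iff $\Nr_{F/\Q}(a)=\pm1$ fails when $S\neq\emptyset$; the correct statement is $\Nr_{F/\Q}(a)\in\Z_{\ul S}^\times$, and the equivalence ``$\Delta_{\rho,i}$ monic $\Leftrightarrow$ $\ul{\Delta}_{\rho,i}$ monic'' then relies on the normalization of $\ul{\Delta}_{\rho,i}$ in $\Z[t^\Z]$ --- the paper asserts this equivalence without comment, so your argument is no less complete than the original on this point.
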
 

\begin{proof} We see that $\Delta_{\rho,i}(t)$ is monic if and only if $\ul{\Delta}_{\rho,i}(t)$ is monic. 
The gcd of the coefficients of $\ul{\Delta}_{\rho,i}(t)$ is given by $g=\prod_p p^{\mu_p}$, where $\mu_p$'s denote the $\mu$-invariants of the $\Zp$-covers. 
The leading term of $\ul{\Delta}_{\rho,i}(t)/g$ is divisible by $p$ if and only if for any $m$ the $\lambda$-invariant of $\Z/m\Z\times \Zp$ is less than $\deg \ul{\Delta}_{\rho,i}(t)/g$. The degree $\deg \ul{\Delta}_{\rho,i}(t)/g$ is determined by $\lambda$'s by Theorem \ref{thm.degree}. This completes the proof. 
\end{proof}

\begin{eg} \label{eg.5_2}
The knot $K=J(2,4)=5_2$ is a non-fibered hyperbolic twist knot with a non-monic classical Alexander polynomial $\Delta_K(t)=2t^2-3t+2$. 

If $p=2$, we have $\Delta_K(t)=t\,\dot{=}\,1$ in $\F_2[t^\Z]$. Hence for any $m>0$ we have $\prod_{\zeta^m=1}\Delta_K(\zeta t)\,\dot{=}\,1$ and $\lambda_2=0$. 

If $p\neq 2$, then $\Delta_K(1+T)$ is monic. Hence there exists some $0\neq m' \in \Z$ such that for any $m\in \Z$ with $m'|m$, the degree of $\deg \prod_{\zeta^m=1} \Delta_K(\zeta(1+T))=2$ is 2 
and $\lambda_p=2$ holds. 
If $p=3$ for instance, then we have $\Delta_K(t)=2(t^2+1)$ in $\F_3[t^\Z]$. Hence for any $m$ with $4|m$ the degree of $\prod_{\zeta^m=1} \Delta_K(\zeta(1+T))$ is 2 and $\lambda_3=2$ holds. 

Thus, $\lambda$'s determine that $\Delta_K(t)$ is non-monic, hence that $K=5_2$ is not fibered. 
\end{eg}

\section{Fiberedness, genus, and profinite rigidity} 
Let $x_K$ denote the Thurston norm of the abelianization map $\alpha:\pi\surj \Z$ of the knot group $\pi$ of a knot $K$, so that we have $x_K=1$ if $K$ is the unknot and $x_K=2g(K)-1$ if otherwise. 
Here we recollect implications for knot group representations of deep results due to Friedl, Kim, Vidussi, and Nagel, that are based on works of Agol, Przytycki, Wise, and others.  
%Friedl--Vidussi. 
(For (1) and (3), see also \cite[Theorem 3.3]{BoileauFriedl2020AMS}.)
\begin{prop} \label{prop.FV} 

{\rm (1) \cite[Theorem 1.2]{FriedlVidussi2015Crelle}} %\magenta{+\cite[Theorem 1.3]{FriedlNagel2015Illinois}}} 
{\rm (i)} 
Let $\ol{\rho}:\pi\surj G$ be a surjective representation onto a finite group with $0\neq \Delta_{\ol{\rho},1}(t)$ $\in \Z[t^\Z]$. Then 
\[x_K\geq {\rm max}\{0, \frac{1}{|G|}(-{\rm deg}(\Delta_{\ol{\rho},0}(t)+{\rm deg}\Delta_{\ol{\rho},1}(t)-{\rm deg}\Delta_{\ol{\rho},2}(t))\}.\]

{\rm (ii)} There is a surjective representation $\ol{\rho}:\pi\surj G$ onto a finite group  with $0\neq \Delta_{\ol{\rho},1}(t)$ $\in \Z[t^\Z]$ satisfying 
\[x_K = {\rm max}\{0, \frac{1}{|G|}(-{\rm deg}(\Delta_{\ol{\rho},0}(t)+{\rm deg}\Delta_{\ol{\rho},1}(t)-{\rm deg}\Delta_{\ol{\rho},2}(t))\}.\] 

Similar assertions hold for $\ol{\rho}$'s over a finite field $\F_p$ 
%Such a representation $\ol{\rho}$ can be replaced with one over a finite field $\F_p$ 
for any fixed prime number $p$, %almost all prime number $p$, 
in which cases we have $\Delta_{\ol{\rho},i}(t) \in \F_p[t^\Z]$. 
%【2015ではユニタリ表現$U(k)$となっている．】

{\rm (2) \cite[Theorem 1.1]{FriedlKim2008}+\cite[Theorem 1.2]{FriedlVidussi2011AnnMath}} 
The knot $K$ is fibered if and only if for every surjective representation $\ol{\rho}:\pi\surj G$ onto any finite group, %= {\rm GL}_N \F_p$ and $p$, 
$\Delta_{\ol{\rho},1}(t)$ is monic in $\Z[t^\Z]$ and the equality in {\rm (1) (ii)} holds. 
%${\rm deg}\Delta_{\rho,1}(t)=|G|x_K+(2div \alpha_\rho)$ holds. 

{\rm (3) \cite[Theorem 1.1]{FriedlVidussi2013JEMS}+\cite[Theorem1.3]{FriedlNagel2015Illinois}} 
The knot $K$ is fibered if and only if every surjective representation 
$\ol{\rho}:\pi\to G$ onto a finite group 
%$\rho:\pi\to {\rm GL}_N\F_p$ 
satisfies $0\neq \Delta_{\ol{\rho},1}(t)$ in $\Z[t^\Z]$. 
A similar assertion holds for $\ol{\rho}$ over $\F_p$ for almost all $p$. 
%【FV2013はonto a finite group，BF2020ではFp-ver．】
\end{prop}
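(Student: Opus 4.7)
The plan is to treat this proposition as a compilation of deep existing results, so the proof is essentially an organized citation with a careful translation layer that promotes the integral statements to $\F_p$-coefficient statements for the application later in the paper. I would write the proof in three parts matching the three items of the proposition.

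For (1), the $\Z$-coefficient statements in (i) and (ii) are \emph{verbatim} Friedl--Vidussi \cite[Theorem 1.2]{FriedlVidussi2015Crelle}: the lower bound on the Thurston norm $x_K$ in terms of degrees of twisted Alexander polynomials, and the existence of a finite-group representation saturating that bound. To obtain the $\F_p$-analogue of (i), I would take an arbitrary surjection $\ol{\rho}:\pi\surj G$ with the abelianization $\alpha$ and note that $\Delta_{\ol\rho,i}(t) \in \F_p[t^\Z]$ arises as the mod-$p$ reduction of the integral twisted polynomial associated to the regular representation of $G$ on $\F_p[G]$, viewed as reduction from $\Z[G]$; since reduction mod~$p$ cannot increase degrees of Fitting ideal generators, the inequality in (i) transfers from $\Z$ to $\F_p$. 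For the $\F_p$-analogue of (ii), I would lift $\ol\rho$ back to a $\Z[G]$-representation, apply the integral (ii), and then observe that fixing a suitable $p$ keeps the leading coefficients non-vanishing, preserving degrees upon reduction.

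For (2), the "only if" direction (fiberedness implies monicness with equality) is the content of Friedl--Kim \cite[Theorem 1.1]{FriedlKim2008}, built on the classical fact that the Alexander polynomial of a fibered knot is monic of degree $2g(K)$ and extended to twisted polynomials. The "if" direction, that universal monicness plus degree equality over finite-group representations forces fiberedness, is the much deeper Friedl--Vidussi \cite[Theorem 1.2]{FriedlVidussi2011AnnMath}, whose proof uses Agol, Przytycki--Wise virtual fibering technology. My proof would simply assemble these two references.

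For (3), the "only if" direction is the striking result of Friedl--Vidussi \cite[Theorem 1.1]{FriedlVidussi2013JEMS} that fiberedness is detected by the non-vanishing of twisted Alexander polynomials over \emph{all} finite quotients. The $\F_p$-refinement "almost all $p$" comes from Friedl--Nagel \cite[Theorem 1.3]{FriedlNagel2015Illinois}, who show that only finitely many primes can obstruct the detection. The "if" direction is immediate: a fibered knot has free-by-cyclic knot group of the fiber, which makes each $H_i(X_\infty, \ol\rho)$ finitely generated over $\F_p$ (or $\Z$), forcing $\Delta_{\ol\rho,1}(t)\neq 0$.

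The main obstacle I anticipate is the $\F_p$-refinement of (1)(ii) and the "almost all $p$" clause of (3); both require controlling which primes can cause a degree drop (or vanishing) upon mod-$p$ reduction of an integral twisted Alexander polynomial. For (1)(ii) the fix is to choose $p$ not dividing a fixed leading coefficient. For (3) the genuine obstacle is bounding the set of bad primes, and here I would cite \cite{FriedlNagel2015Illinois} rather than reproduce the argument, which rests on Reidemeister torsion computations over $\F_p[G]$ and a compactness argument in the representation variety.
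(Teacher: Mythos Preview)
Your approach matches the paper's: this proposition is stated as a compilation of cited results with no proof given in the paper itself, beyond a few clarifying remarks afterward (e.g., that (3) implies (2) via Friedl--Kim, and that the $\F_p$-version of (3) is due to Friedl--Nagel). Treating it as organized citation is exactly right.

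Two small issues to fix. First, in part (3) you have the directions swapped: the deep Friedl--Vidussi result \cite{FriedlVidussi2013JEMS} is the \emph{if} direction (all $\Delta_{\ol\rho,1}\neq 0$ implies fibered, equivalently non-fibered implies some $\Delta_{\ol\rho,1}=0$), while the easy direction (fibered implies every $\Delta_{\ol\rho,1}\neq 0$, via the free-by-cyclic structure) is the \emph{only if}. Second, your reduction argument for the $\F_p$-version of (1)(ii) only yields the claim for all but finitely many $p$, not for \emph{any fixed} $p$ as stated; the correct route is simply that \cite{FriedlVidussi2015Crelle} already works over an arbitrary field, so the $\F_p$-statement is directly contained in the cited reference rather than obtained by reducing the integral one.
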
 

Friedl--Kim \cite[Theorem 1.1]{FriedlKim2008} (as well as \cite{Cha2003} and \cite{GodaKitanoMorifuji2005}) asserts that if a knot is fibered then every $\Delta_{\ol{\rho},1}(t)$ is monic and the equation on the degree holds, so %Under this theorem, 
(3) implies (2) for $\Delta_{\ol{\rho},1}(t)$ in $\Z[t^\Z]$. The version of (3) over $\F_p$ is due to Friedl--Nagel and it played an important role in proving the profinite rigidity of the fiberedness of knots in \cite{BoileauFriedl2020AMS}.  

Note that a surjective representation $\ol{\rho}$ onto a finite group $G$ may be replaced by 
a representation to ${\rm Aut}\Z[G]\cong {\rm GL}_{|G|}\Z$, that induces a representation to ${\rm GL}_{|G|}\F_{p^r}$ for any prime number $p$ and any $r\in \Z_{>0}$. 
Any $\ol{\rho}:\pi\to {\rm GL}_N\F_p$ lifts to $\rho:\pi\to {\rm GL}_NO$ for $O=\Zp$ or its finite extension. 
Since $\Delta_{\ol{\rho},1}(t)=\Delta_{\rho,1}(t)$ mod $p$, 
we have $\Delta_{\ol{\rho},1}(t)= 0$ if and only if $p| \Delta_{\rho,1}(t)$. 
Thus, if there exists a lift $\rho$ with $\Delta_{\rho,1}(t)\neq 0$, then we obtain $\mu_1>0$. 
Hence by Proposition \ref{prop.FV} and Theorems \ref{thm.degree}, \ref{thm.monic}, we obtain the following. 

\begin{thm} \label{thm.fib.genus} 
The twisted Iwasawa invariants of representations $\rho:\pi\to {\rm GL}_N O_\p$ and $\Z/m\Z\times \Zp$-covers 
determine the fiberedness and the genus of any knot. 
More precisely, for each $\rho:\pi\to {\rm GL}_N O_\p$, where $\p$ is a prime ideal of $O$ over a prime number $p$, put $d={\rm deg}O_\p/\Zp$. 

{\rm (1)} 
For almost every prime number $p$, 
%if $\rho$ and $m$ run, 
if $\rho$ and $m$ are regarded as indeterminacies, 
then the set of $\lambda_\tau=\lambda_1-\lambda_0$ 
%\red{the set $\{\lambda_\tau=\lambda_1-\lambda_0 \mid \rho, m\}$} 
determines the genus $g(K)$ of a knot in the sense that %, that is, 
\[x_K={\rm max}\{0, \lambda_\tau/Nd \mid \rho, m\}\] holds. 
In addition, $x_K={\rm max}\{0, \lambda_\tau/Nd \mid \rho, m, p\}$ holds. 

{\rm (2)} Let $p$ be a fixed prime number. 
The knot $K$ is fibered if and only if for any representation $\rho:\pi\to {\rm GL}_NO_\p$, 
$\mu_1$ and $\lambda_1$ are defined, $\mu_1=0$ holds, and the set of $\lambda_\tau$'s determines $g(K)$ 
in the sense that 
%that is, 
\[x_K={\rm max}\{0, \lambda_\tau/Nd \mid m\}\] holds. 

{\rm (3)} 
Let $p$ be a fixed prime number. 
The knot $K$ is fibered if and only if for every $\rho:\pi\to {\rm GL}_NO_\p$, $\mu_1$ is defined and $\mu_1=0$ holds. 
\end{thm}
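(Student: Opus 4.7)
The strategy is to translate each clause of Proposition \ref{prop.FV} into the language of Iwasawa invariants using Theorems \ref{thm.degree} and \ref{thm.monic} as two dictionaries: the degree of $\ul{\Delta}_{\rho, i}$ is recorded by $\lambda$, and divisibility of its coefficients by $p$ is recorded by $\mu$. I would tackle (1) first. For any $\rho: \pi \to \GL_N O_\p$ with $\Delta_{\rho, 1}(t) \neq 0$, Proposition \ref{lem.Delta} forces $\Delta_{\rho, 2}(t) \dot{=} 1$, so the Friedl--Vidussi inequality in Proposition \ref{prop.FV}(1)(i), applied to a mod-$\p$ reduction $\ol\rho$ after restriction of scalars to $\F_p$, reduces to $x_K \geq (\deg \ul{\Delta}_{\rho, 1} - \deg \ul{\Delta}_{\rho, 0})/(Nd)$, the factor $d$ coming from $\Nr_{F_\p/\Qp}$ multiplying degrees by $d$. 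Theorem \ref{thm.degree} then guarantees that for almost all $p$ and a suitable $m$ coprime to $p$, $\lambda_i$ attains $\deg \ul{\Delta}_{\rho, i}$, yielding $x_K \geq \lambda_\tau/(Nd)$. For the attainment of the maximum I would invoke Proposition \ref{prop.FV}(1)(ii), pick a $\ol\rho$ saturating $x_K$, lift to characteristic zero using the smoothness of $\GL_N$ over $\Zp$, and apply Theorem \ref{thm.degree} once more to select a saturating $m$.

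Next I would handle (3), which is the cleanest. By the $p$-adic Weierstrass preparation theorem, ``$\mu_1$ defined and $\mu_1 = 0$'' for $\rho_\p$ is equivalent to $\ul{\Delta}_{\rho_\p, 1}(t) \not\equiv 0 \pmod p$, and hence to $\Delta_{\ol\rho, 1}(t) \neq 0$ in $\F_{p^r}[t^\Z]$ for $\ol\rho = \rho_\p \bmod \p$. Since every such $\ol\rho$ lifts to some $\rho_\p$, running $\rho$ over all $\p$-adic representations amounts to running $\ol\rho$ over all finite-quotient representations, and Proposition \ref{prop.FV}(3) in its $\F_p$-form yields the equivalence. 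Part (2) then follows by combining (1) and (3) via Theorem \ref{thm.monic}: Proposition \ref{prop.FV}(2) characterizes fiberedness by the conjunction that $\Delta_{\ol\rho, 1}(t)$ is monic for every finite $\ol\rho$ and that equality holds in the Thurston-norm inequality; Theorem \ref{thm.monic} translates monicness of $\Delta_{\rho_\p, 1}(t)$ for all $\rho_\p$ into ``$\mu_1 = 0$ universally together with $\lambda_1$ attaining $\deg \ul{\Delta}_{\rho_\p, 1}$'', and the degree equality with $x_K$ becomes the max-equality from part (1).

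The main obstacle is the faithful translation between the Friedl--Vidussi--Nagel finite-group/$\F_p$ framework and our $\p$-adic viewpoint. I would need to verify that restriction of scalars along $O_\p/\Zp$ introduces the expected factor $d$ in degrees, that lifts from $\GL_N \F_p$ to $\GL_N O$ neither create nor destroy the non-vanishing properties of the $\Delta_i$'s in the relevant sense (so that the Iwasawa invariants of lifts genuinely control the $\F_p$-polynomials used by FV), and that the ``almost all $p$'' caveats in Proposition \ref{prop.FV}(1) and (3) can be absorbed consistently into the statement. These are bookkeeping matters, but they are where the real technical work of the proof lies.
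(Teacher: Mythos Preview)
Your overall strategy---translate Proposition~\ref{prop.FV} through Theorems~\ref{thm.degree} and~\ref{thm.monic}---is exactly the paper's approach, and your handling of~(1) and~(2) matches the intended argument.

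There is, however, a genuine gap in your treatment of~(3). You invoke ``Proposition~\ref{prop.FV}(3) in its $\F_p$-form'', but that form (the Friedl--Nagel statement) is asserted only for \emph{almost all}~$p$, whereas Theorem~\ref{thm.fib.genus}(3) is claimed for an \emph{arbitrary} fixed prime. Your closing remark that the ``almost all~$p$'' caveat is mere bookkeeping is incorrect here: for the finitely many exceptional primes the $\F_p$-version gives nothing, and no amount of lifting or restriction of scalars repairs this. The paper's route is different. It uses the $\Z$-version of Proposition~\ref{prop.FV}(3) (Friedl--Vidussi over~$\Z$, valid without restriction on~$p$) together with the observation recorded just before the theorem: a surjection $\pi\twoheadrightarrow G$ onto a finite group yields, via the regular representation, a representation $\rho:\pi\to\GL_{|G|}\Z\subset\GL_{|G|}\Zp$. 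If $K$ is not fibered, the $\Z$-version supplies a~$G$ with $\Delta_{G,1}(t)=0$ in $\Z[t^\Z]$, hence $\Delta_{\rho,1}(t)=0$ in $\Zp[t^\Z]$, so $\mu_1$ is undefined for this~$\rho$---and this works at \emph{every} prime~$p$. The forward direction (fibered $\Rightarrow$ $\mu_1=0$) you handle correctly via Friedl--Kim.

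So the repair is not bookkeeping but a change of input: replace the $\F_p$-criterion by the $\Z$-criterion plus the regular-representation embedding.
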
 

It is known that if $K$ is a torus knot, which is a fibered knot, then every irreducible ${\rm SL}_2$-representation has the same $\Delta_{\rho}(t)$. However, it seems that for a generic non-fibered knot $K$, $\Delta_{\rho,1}(t)$ is not a constant on the character variety, and a generic lift $\rho$ of $\ol{\rho}$ with $\Delta_{\ol{\rho},1}(t)=0$ satisfies $\Delta_{\rho,1}(t)\neq 0$. 
We optimistically attach the following conjecture. 
\begin{conj} \label{conj.mu}
Let $p$ be a fixed prime number. If $K$ is non-fibered, then there exists some $\rho:\pi\to {\rm GL}_NO_\p$ such that $\mu_1$ is defined and $\mu_1>0$ holds. 
\end{conj}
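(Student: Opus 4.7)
The plan is to combine Proposition \ref{prop.FV} (3) with a lifting–deformation argument, so that the desired $\rho$ arises as a carefully chosen $p$-adic lift of a representation which detects non-fiberedness in characteristic $p$. If $K$ is non-fibered, Proposition \ref{prop.FV} (3), either directly over $\F_p$ (for almost all $p$) or via the integral version followed by reduction modulo $p$, furnishes $\ol{\rho}:\pi\to {\rm GL}_N\F_p$ with $\Delta_{\ol{\rho},1}(t)=0$ in $\F_p[t^\Z]$. The key tautological observation is that if $\rho:\pi\to {\rm GL}_NO_\p$ is any lift of $\ol{\rho}$ with $\Delta_{\rho,1}(t)\neq 0$ in $O_\p[t^\Z]$, then its reduction modulo $\p$ vanishes, so every coefficient of $\Delta_{\rho,1}(t)$ lies in $\p$, forcing $\mu_1\geq 1>0$ automatically. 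The conjecture therefore reduces to producing such a lift.

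To produce it, I would parameterize the lifts of $\ol{\rho}$ by Mazur's universal deformation ring $R=R(\ol{\rho})$ with universal representation $\rho^{\rm univ}:\pi\to {\rm GL}_NR$. Reading $\Delta_{\rho^{\rm univ},1}(t)$ off the Fox matrix of a finite presentation of $\pi$, the vanishing locus cuts out a closed subscheme $V\subset \Spec R$; the goal is then to show $V\neq \Spec R$, so that some specialization $R\surj O_\p$ avoids $V$. A concrete tactic is to find an explicit one-parameter family of lifts, for instance along an irreducible component of the character variety through $\ol{\rho}$, or arising from the hyperbolic or $\text{CAT}(0)$-cubical structure granted by the Agol--Wise machinery that underlies Proposition \ref{prop.FV}, and to compute $\Delta_{\rho,1}(t)$ along it, verifying non-vanishing at the generic point.

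The main obstacle is precisely the non-triviality of $V$, i.e., ruling out the degenerate scenario in which every characteristic-zero deformation of $\ol{\rho}$ has vanishing twisted Alexander polynomial. The authors themselves label this as ``optimistic,'' and their heuristic that $\Delta_{\rho,1}(t)$ is not constant on the character variety of a non-fibered knot is what must be made rigorous. The explicit calculation of Morifuji--Suzuki for $K=9_{35}$ yielding $\mu=4$, together with the twist-knot computations of Theorem \ref{thm.twistknot}, indicate feasibility in special cases, but converting these into a uniform proof will likely require a conceptual link between non-fiberedness of $K$ and the generic structure of $p$-adic deformations of its finite-image representations, perhaps mediated by Thurston-norm considerations or by the $L^2$-type invariants underlying the Friedl--Vidussi--Nagel technology.
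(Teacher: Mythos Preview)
The statement you are attempting to prove is labelled a \emph{Conjecture} in the paper and is explicitly introduced with the words ``We optimistically attach the following conjecture.'' There is no proof in the paper to compare against. In the surrounding discussion the authors make exactly the reduction you describe: given $\ol{\rho}$ with $\Delta_{\ol{\rho},1}(t)=0$ supplied by Proposition~\ref{prop.FV}~(3), any lift $\rho$ with $\Delta_{\rho,1}(t)\neq 0$ automatically has $\mu_1>0$, since $\Delta_{\ol{\rho},1}(t)=\Delta_{\rho,1}(t)\bmod p$. They then state, in the final Remarks section, that ``the unsolved part is the existence of a lift $\rho$ of a higher dimensional non-acyclic representation $\ol{\rho}$ with $\Delta_{\rho}(t)\neq 0$.''

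Your proposal reproduces this reduction faithfully and correctly isolates the same gap: showing that the vanishing locus $V\subset \Spec R(\ol{\rho})$ is a proper subscheme. But you do not close the gap; the deformation-theoretic language and the suggestion to look along components of the character variety are reasonable heuristics, yet they do not yield an argument that $V\neq \Spec R$. The evidence you cite (Morifuji--Suzuki for $9_{35}$, the twist-knot calculations) is the same evidence the paper cites, and in neither case does it generalize. In short, your write-up is an accurate summary of why the conjecture is plausible and where the difficulty lies, but it is not a proof, and the paper does not claim one either.

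One minor point: for a \emph{fixed} prime $p$ the $\F_p$-version of Proposition~\ref{prop.FV}~(3) is only asserted for almost all $p$, so to cover every $p$ you must go through the integral version and reduce, as you note parenthetically; this is harmless (vanishing of the integral Fitting ideal forces vanishing mod $p$), but it should be the primary route rather than an aside.
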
 

%\begin{proof} We prove the existence of a lift of $\ol{\rho}$ with $\Delta_{\ol{\rho}}=0$ such that $\mu_1>0$. Recall that the $\GL_N$-character variety of $\pi$ is given by finite set of polynomials $\{f_i\}_i$ and that the coefficients $a_j$ of $\Delta_{\rho,i}(t)$ are also polynomial functions on the character variety. The trace of $\ol{\rho}$ with $\Delta_{\ol{\rho},i}(t)=0$ corresponds to a common zero $\ol{\alpha}$ of $f_i$'s and $a_j$'s mod $p$, which lifts to a common zero $\alpha$ of $f_i$'s and $a_j-p$'s by B\'ezout's theorem. Let $\rho$ be a representation whose trace corresponds to $\alpha$. Then we have $\mu_1=1>0$. \end{proof}

In general, it is a difficult problem to find a representation $\rho$ that detects the non-fiberedness of a knot. In this view, we study ${\rm SL}_2$-representations of twist knots in Sections 8, 9. 
Recently, Morifuji and Suzuki found several examples in a systematic manner \cite{MorifujiSuzuki2022IJM}. Among other things, they found that $K=9_{35}$ admits a ${\rm GL}_6(\F_2)$-representation $\ol{\rho}$ with $\Delta_{\ol{\rho}}(t)=0$, as well as this $\ol{\rho}$ lifts to a ${\rm GL}_6(\Z)$-representation $\rho$ such that $\Delta_{\rho}(t)=2^4(t+1)^2(t-1)^2(7t^2+13t+7)(7t^2-13t+7)$. 
This means that we have $\mu=4>0$ for $p=2$. 
We wonder if every lift of $\ol{\rho}$ to $\Z_2$ satisfies $\mu=4$, 
and if this concrete value $\mu=4$ conveys any information. 

The ${\rm GL}_2$-detection theory of the Thurston norms has been developed by Agol--Dunfield \cite{AgolDunfield2020AMS} 
from a viewpoint of the Dunfield--Friedl--Jackson conjecture, which claims that information of hyperbolic knots may be conveyed by the twisted Alexander polynomials of the holonomy representations \cite{DunfieldFriedlJackson2012}. 

The detection of the fiberedness and the genera is based on the fact that the estimate by the degrees of the Alexander polynomials is sharp. 
Since the cyclic resultants are profinite invariants (cf.~\cite[Remark 4.2]{Ueki5}), so are the Iwasawa invariants, noting that our Iwasawa invariants are defined if and only if the Iwasawa module $\mca{A}_{\rho,i}$ is a torsion $O_\p[[t^{\Zp}]]$-module.   
Hence we obtain the following result on the profinite rigidity, refining \cite[Theorem 1.2]{BoileauFriedl2020AMS} and \cite{Ueki7}. 

\begin{thm} \label{thm.profinite} 
The fiberedness and the genus of a knot $K$ are determined by the isomorphism class of the profinite completion of the knot group $\pi$, via the twisted Iwasawa invariants. 
\end{thm}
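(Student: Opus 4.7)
The plan is to combine Theorem \ref{thm.fib.genus} with the observation that the twisted Iwasawa invariants are intrinsic to $\widehat{\pi}$. It suffices to show that for any two knots $K,K'$ and any profinite isomorphism $\varphi:\widehat{\pi}\overset{\sim}{\to}\widehat{\pi'}$, the collection of Iwasawa triples $(\lambda_{p,i},\mu_{p,i},\nu_{p,i})$ arising from $\mathrm{GL}_N O_\p$-representations and $\Z/m\Z\times \Zp$-covers coincides on the two sides; the theorem then follows directly from Theorem \ref{thm.fib.genus} (1) and (3).

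First I would note that $O_\p=\varprojlim_r O/\p^r$ is a profinite topological ring, so $\mathrm{GL}_N O_\p$ is profinite, and every representation $\rho:\pi\to \mathrm{GL}_N O_\p$ extends uniquely to a continuous representation $\widehat{\rho}:\widehat{\pi}\to \mathrm{GL}_N O_\p$. Composing with $\varphi^{-1}$ and restricting to $\pi'$ yields a representation $\rho':\pi'\to \mathrm{GL}_N O_\p$, giving a natural bijection between $\mathrm{GL}_N O_\p$-representations of the two knot groups. Because the abelianization $\pi\twoheadrightarrow \Z$ and its reduction $\pi\twoheadrightarrow \Z/mp^r\Z$ factor through the profinite completion, the finite-index subgroups $\pi_{mp^r}=\mathrm{Ker}(\pi\twoheadrightarrow \Z/mp^r\Z)$ and $\pi'_{mp^r}$ correspond compatibly under $\varphi$.

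Second, I would argue that under this correspondence the sizes $\|H_i(X_{mp^r},\rho)_{\rm tor}\|_p$ agree. By Theorem \ref{thm.Fox} (2) these are products of absolute values of cyclic resultants of $\ul{\Delta}_{\rho,i}(t)$ together with bounded correction factors $c_n,k_n$; the cyclic resultants are profinite invariants by \cite[Remark 4.2]{Ueki5}, and $c_n,k_n$ are determined by torsion subgroups of twisted homologies of finite covers, hence also by profinite data. Equivalently, since $X$ is aspherical, $H_i(X_{mp^r},\rho)\cong H_i(\pi_{mp^r},\rho|_{\pi_{mp^r}})$, and the $p$-primary part of this finite abelian group depends only on $\rho$ modulo $\p^s$ for $s\gg 0$. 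The Iwasawa type formula (Theorem \ref{thm.Iwasawa}) then recovers $(\lambda,\mu,\nu)$ uniquely from the asymptotic sequence of these sizes, so each twisted Iwasawa invariant, including the Reidemeister--Iwasawa variants, is a profinite invariant.

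An application of Theorem \ref{thm.fib.genus} (1) and (3) now yields that both the genus and the fiberedness of $K$ are determined by $\widehat{\pi}$, which is the claim. The principal technical point is the profinite invariance in the second step; it is made transparent either by the cyclic resultant formulation of Theorem \ref{thm.Fox} or by the asphericity of $X$ together with the finiteness of the $p$-torsion in question. No additional topological input beyond Theorem \ref{thm.fib.genus} is needed, so the entire proof reduces to bookkeeping of profinite invariance of the finite abelian groups $H_i(X_{mp^r},\rho)[p^\infty]$.
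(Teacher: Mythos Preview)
Your argument follows the same line as the paper's: reduce to Theorem~\ref{thm.fib.genus} after checking that the twisted Iwasawa invariants depend only on $\widehat{\pi}$, invoking the profinite invariance of cyclic resultants from \cite[Remark~4.2]{Ueki5}. The paper, however, isolates one point you pass over. A profinite isomorphism $\varphi$ need not carry the abelianization $\alpha$ of $\pi$ to the abelianization $\alpha'$ of $\pi'$; on $\widehat{\Z}$ it is multiplication by some unit $v\in\widehat{\Z}^{*}$, so the transported Alexander polynomial is a priori $\Delta_{\rho,i}(t^{v})$ rather than $\Delta_{\rho',i}(t)$. The paper resolves this by noting that cyclic resultants are invariant under $t\mapsto t^{v}$, so the Iwasawa invariants computed from either choice agree. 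Your phrasing via the subgroups $\pi_{mp^{r}}$ actually absorbs this issue, since multiplication by a unit of $\widehat{\Z}$ preserves every finite-index subgroup; but your sentence that ``the abelianization $\pi\twoheadrightarrow\Z$ factors through the profinite completion'' is false as written (only its finite quotients do), and the unit argument deserves to be made explicit.

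Your assertion that $c_{n}$ and $k_{n}$ are determined by profinite data is neither justified nor present in the paper; these sequences involve the Alexander module of the infinite cyclic cover, not of a finite one. It is also unnecessary: Theorem~\ref{thm.fib.genus} uses only $\lambda$ and $\mu$, and these are recovered from the growth of the cyclic resultants $|r_{mp^{r}}|_{p}$ alone, the bounded factors $c_{n},k_{n}$ contributing only to $\nu$. Likewise, your alternative route through asphericity and group homology tacitly assumes that knot groups are \emph{good} in Serre's sense (so that $H_{i}(\pi_{n},M)\cong H_{i}^{\mathrm{cont}}(\widehat{\pi_{n}},M)$ for finite $M$); this is true but nontrivial, and the paper avoids the issue entirely by staying with cyclic resultants.
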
 

\begin{proof} 
The profinite completion of the group $\pi$ is defined by $\wh{\pi}=\varprojlim_{\Gamma \lhd \pi} \pi/\Gamma$, where $\Gamma$ runs through subgroups of finite indices. 
Note that if an unknown isomorphism $\wh{\pi}\cong \Pi$ to a profinite group $\Pi$ is given, we cannot detect the image of $\pi$ in $\Pi$. Thus we need to take into account all representations over $O_\p$ and surjective homomorphisms $\alpha:\wh{\pi}\surj t^{\wh{\Z}}$, where $\wh{\Z}=\varprojlim \Z/n\Z$ denotes the Pr\"uffer ring. 
However, for an arbitrary taken generator $s \in t^\wh{\Z}$, we have $s^v=t$ for some unit $v\in \wh{\Z}^*$ and the Alexander polynomial is of the form $\Delta_{\rho,i}(s^v)$. Since the cyclic resultants persist under $t\mapsto t^v$, it suffices only to consider the Iwasawa invariants of the covers obtained from the $\Z$-cover. 
Now Theorem \ref{thm.fib.genus} yields the assertion. 
\end{proof}

We remark that the profinite rigidity of the fiberedness of a general compact 3-manifold is proved by Jaikin-Zapirain \cite{JaikinZapirain2020GT}. 
Wilkes proved among other things that if $J$ is a prime graph knot and $K$ is another knot, then $\wh{\pi}_J\cong \wh{\pi}_K$ implies that $S^3-J$ is homeomorphic to $S^3-K$ \cite[Theorem B]{Wilkes2019Israel}. 
Even though there have been made much progress by many researchers, it seems that we still have a lot to do in this direction (cf. \cite{BJZR2023.problems}).  

\section{$\mu=0$ for ${\rm SL}_2$-representations of twist knots} 
Theorem \ref{thm.fib.genus} (3) asserts that if $K$ is not fibered, then there is some $\rho$ with 
$\Delta_\rho(t)=0$ or with  
$\mu> 0$. However, finding such a representation is difficult and remains as an interesting problem. 
In this section, we establish the $\mu=0$ theorem for any ${\rm SL}_2$-representation of twist knots $J(2,2n)$ with $n\in \Z$, which is fibered only for $n=0,\pm 1$. 
We continue to suppose that $O=O_{F,S}$ and $\F=O/\p$ for a maximal ideal $\p$ of $O$ above a prime number $p$. 
We first prove a slightly general assertion: 

\begin{thm} \label{thm.mu=0} 
Let $\rho:\pi\to {\rm SL}_2O$ be a knot group representation. 
If the residual representation $\ol{\rho}=\rho \mod \p$ is irreducible and $\Delta_{\ol{\rho},1}(t)/\Delta_{\ol{\rho},0}(t)\neq 0$ in $\F[t^\Z]$, then $\mu_{p,1}=0$ holds. 
If instead $\mu_{p,1}\neq 0$, then $\ol{\rho}$ is non-acyclic. 
The similar assertion holds for $\rho_\p: \pi\to {\rm SL}_2O_\p$. 
\end{thm}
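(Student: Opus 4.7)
The plan is to characterise $\mu_{p,1}=0$ via a Gauss-norm computation and then establish the vanishing by reducing modulo $\p$. By Theorem~\ref{thm.mu}, $\mu_{p,1}=0$ is equivalent to $\mh_p(\ul{\Delta}_{\rho,1}(t))=1$, and since $\ul{\Delta}_{\rho,1}(t)=\prod_{\p'\mid p}\ul{\Delta}_{\rho\otimes O_{\p'},1}(t)$ with $\mh_p$ multiplicative, it suffices to show $\mh_p(\ul{\Delta}_{\rho\otimes O_\p,1}(t))=1$ at the $\p$ of the hypothesis. I would therefore work locally with $\rho\otimes O_\p$, whose residual representation modulo $\p O_\p$ is exactly $\ol{\rho}$; the final $\rho_\p$-version stated in the theorem is then handled by running the same local argument verbatim with $\rho_\p$ in place of $\rho\otimes O_\p$.

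The key step is the mod-$\p$ identification $\Delta_{\rho\otimes O_\p,1}(t)\bmod\p\,\dot{=}\,\Delta_{\ol{\rho},1}(t)$ in $\F[t^\Z]$. Because $\rho\otimes O_\p$ takes values in ${\rm SL}_2$, irreducibility of $\ol{\rho}$ forces non-abelianness, so \cite[Corollary 3]{TangeRyoto2018JKTR} (quoted in the excerpt immediately before Example~\ref{eg.holonomy}) applies to both $\rho\otimes O_\p$ and $\ol{\rho}$, yielding $\Delta_{\rho\otimes O_\p,0}(t)\,\dot{=}\,\Delta_{\ol{\rho},0}(t)\,\dot{=}\,1$ together with $\Delta_{\rho\otimes O_\p,1}(t)\,\dot{=}\,\tau_{\rho\otimes\alpha}(t)$ in $O_\p[t^\Z]$ and $\Delta_{\ol{\rho},1}(t)\,\dot{=}\,\tau_{\ol{\rho}\otimes\alpha}(t)$ in $\F[t^\Z]$. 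The Reidemeister torsion is a ratio of determinants of square submatrices of a Fox Jacobian with entries in $O_\p[t^\Z]$, whose denominator is a unit precisely because $\Delta_{\rho\otimes O_\p,0}(t)\,\dot{=}\,1$; reduction modulo $\p$ commutes with these determinants and with division by a unit, and sends the Fox matrix of $\rho\otimes O_\p$ to that of $\ol{\rho}$. The desired identification follows.

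To conclude, note that the hypothesis $\Delta_{\ol{\rho},1}(t)/\Delta_{\ol{\rho},0}(t)\neq 0$ reduces to $\Delta_{\ol{\rho},1}(t)\neq 0$ in $\F[t^\Z]$, so some coefficient of $\Delta_{\rho\otimes O_\p,1}(t)$ is an $O_\p$-unit, giving $\mh_p(\Delta_{\rho\otimes O_\p,1}(t))=1$ for the canonical $p$-adic absolute value on $\C_p$. Since Galois acts by isometries on this absolute value, multiplicativity of the Mahler measure gives
\[\mh_p(\ul{\Delta}_{\rho\otimes O_\p,1}(t))=\mh_p({\rm Nr}_{F_\p/\Qp}\Delta_{\rho\otimes O_\p,1}(t))=\mh_p(\Delta_{\rho\otimes O_\p,1}(t))^{[F_\p:\Qp]}=1,\]
and Theorem~\ref{thm.mu} delivers $\mu_{p,1}=0$. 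The second assertion is the contrapositive.

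The main obstacle is the mod-$\p$ identification of Alexander polynomials, because Fitting ideals do not in general commute with base change and one could worry about spurious factors appearing upon reduction. The essential input is residual irreducibility, which forces $\Delta_{\rho\otimes O_\p,0}(t)\,\dot{=}\,1$ and hence places the Reidemeister torsion literally in $O_\p[t^\Z]$; it is precisely here that the ${\rm SL}_2$/irreducibility hypothesis really does its work.
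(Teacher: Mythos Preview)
Your argument for the first assertion follows the same route as the paper's: identify $\Delta_{\ol{\rho},1}(t)$ with the reduction of $\Delta_{\rho,1}(t)$ modulo $\p$, then observe that nonvanishing mod $\p$ forces Gauss norm $1$ and hence $\mu=0$ via Theorem~\ref{thm.mu}. Your extra care in justifying the mod-$\p$ identification through the Reidemeister torsion is fine, and the local norm computation is correct since the embeddings $F_\p\hookrightarrow\C_p$ are $\Q_p$-linear and therefore isometric. One quibble: the sentence ``it suffices to show $\mh_p(\ul{\Delta}_{\rho\otimes O_\p,1}(t))=1$ at the $\p$ of the hypothesis'' is not literally right, since multiplicativity only gives $\mh_p(\ul{\Delta}_{\rho,1})=\prod_{\p'\mid p}\mh_p(\ul{\Delta}_{\rho\otimes O_{\p'},1})$ and each factor is $\leq 1$, so controlling one factor does not by itself force the product to be $1$. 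The paper's proof is equally terse on this point, however.

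The genuine gap is your treatment of the second assertion. It is \emph{not} the contrapositive of the first. The contrapositive would read: if $\mu_{p,1}\neq 0$ then either $\ol{\rho}$ is reducible or $\Delta_{\ol{\rho},1}(t)/\Delta_{\ol{\rho},0}(t)=0$ in $\F[t^\Z]$. The theorem instead concludes that $\ol{\rho}$ is \emph{non-acyclic}, meaning $H_i(\pi,\ol{\rho})\neq 0$ for some $i$; this is a statement about the cohomology of $\ol{\rho}$ itself, not of $\ol{\rho}\otimes\alpha$, and it neither implies nor is implied by the disjunction above without further input. The paper fills this in with a short separate argument: from $\mu_{p,1}\neq 0$ one has $\Delta_{\rho,1}(1)\equiv 0$ in $\F$, and then one invokes the characterisation of non-acyclic representations as zeros of the acyclic torsion function $\tau(x,y)=\Delta_{\rho,1}(1)/\Delta_{\rho,0}(1)$ on the character variety. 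You need to add this step; the single word ``contrapositive'' does not cover it.
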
 

\begin{proof} By $\Delta_{\ol{\rho},i}(t)=\Delta_{\rho,i}(t) \mod p$ and $p^{-\mu_p}=\mh_p(\ul{\Delta}_{\rho,i}(t))^m$, 
$\Delta_{\ol{\rho},i}(t)\neq 0$ in $\F[t^\Z]$ implies $\mu_{p,i}=0$. 
If $\ol{\rho}=\rho \mod \p$ is irreducible, then by $\Delta_{\rho,0}(t)$ $\dot{=}$ $1$, we have $\mu_{p,0}=0$ and $\mu_{p,\tau}=\mu_{p,1}$. Hence if $\Delta_{\ol{\rho},1}(t)/\Delta_{\ol{\rho},0}(t)\neq 0$, then $\mu_{p,\tau}=\mu_{p,1}=0$ holds. 

Recall that $\rho$ is said to be non-acyclic if $H_i(\pi,\rho)\neq0$ for some $i$. 
The conjugacy classes of non-acyclic irreducible representations correspond to the zeros of the acyclic torsion function $\tau(x,y)=\Delta_{\rho,1}(1)/\Delta_{\rho,0}(1)$ on some components of the character variety. 
If $\mu_{p,1}\neq 0$, then $\Delta_{\rho,1}(1)=0$ in $\F$, and hence $\ol{\rho}$ is non-acyclic. 
\end{proof}

Now we investigate the Iwasawa $\mu$-invariants of twist knot representations. 
For each $n\in \Z$, \emph{the twist knot} $K=J(2,2n)$ is defined by the following diagram, 
where the full twist is right-handed if $n>0$ and left-handed if $n<0$; %\\[-10mm]
\begin{center} 
%\ \\[-10mm] 
\includegraphics[width=4cm]{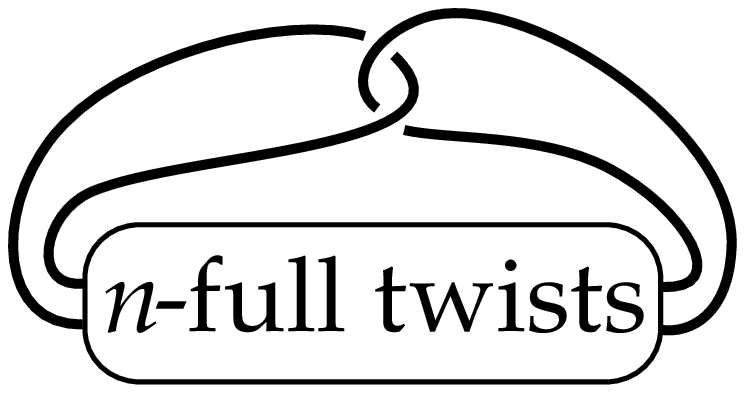}\\[6mm] %\\[-8mm]
\end{center} 
We have $J(2,0)=0_1$ (unknot), $J(2,2)=3_1$ (trefoil), $J(2,4)=5_2$, $J(2,-2)=4_1$ (figure-eight knot). 
If we regard a 1/2-full twist as a half twist, then $J(2, -2n)$ and $J(2,2n+1)$ are the mirror image of each other. 
The group $\pi=\pi_1(S^3-J(2,2n))$ of $J(2,2n)$ has the following standard presentation 
\cite[Proposition 1]{HosteShanahan2004JKTR}: 
\[\pi=\left<a,b \,|\, aw^n=w^nb\right>, \ \  	w=[a,b^{-1}]=ab^{-1}a^{-1}b.\]
By the theory of character varieties, conjugacy classes of ${\rm SL}_2$-representations are parametrized by $x=\tr \rho(a)$ and $y=\tr \rho(ab)$. We also put $z=z(x,y)=\tr \rho(w)=2x^2-x^2y+y^2-2$. 
Define the Chebyshev polynomials $\mca{S}_n(z)\in \Z[z]$ of the second kind by $\mca{S}_n(2\cos \theta)=\dfrac{\sin n\theta}{\sin \theta}$, $\theta\in \R$. (Note that in some other articles a slightly different convention $S_{n+1}=\mca{S}_n$ is used.)

\begin{prop}[{\cite[Theorem 3.3.1]{Le1993}, \cite[(1.2), (1.3)]{NagasatoTran2016OJM}, \cite[Propositions 3.5, 3.6]{TangeTranUeki2022IMRN}}] \label{prop.charvar} 
Each conjugacy class of irreducible ${\rm SL}_2\C$-representations corresponds to each zero of $f_n(x,y)=(y-1)\mca{S}_n(z)-\mca{S}_{n-1}(z)$ with $x^2-y-2\neq 0$. 
Each conjugacy class of non-abelian reducible $\SL_2\C$-representations corresponds to each zero of $x^2-y-2$. 

If we replace $\C$ by a finite field $\F$, then a similar assertion holds for absolutely irreducible/reducible representations. 
\end{prop}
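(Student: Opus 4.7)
The plan is to work in trace coordinates on the $\SL_2$-character variety of $\pi$ and reduce the single defining relation $aw^n = w^n b$ to one polynomial equation via Cayley--Hamilton and the Chebyshev recursion. Since $a$ and $b$ are conjugate meridians of the two-bridge knot $J(2,2n)$, any $\SL_2$-representation $\rho$ satisfies $\tr\rho(a) = \tr\rho(b) = x$, and the standard $\SL_2$-trace identities give $\tr\rho(ab^{-1}) = x^2 - y$ together with the Fricke commutator formula
\[\tr\rho([a,b^{-1}]) = \tr\rho(a)^2 + \tr\rho(b^{-1})^2 + \tr\rho(ab^{-1})^2 - \tr\rho(a)\tr\rho(b^{-1})\tr\rho(ab^{-1}) - 2,\]
which after substitution reproduces the stated identity $z = 2x^2 - x^2 y + y^2 - 2$.

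For the irreducible statement, I would put $\rho(a) = \begin{pmatrix}\alpha & 1 \\ 0 & \alpha^{-1}\end{pmatrix}$ and $\rho(b) = \begin{pmatrix}\alpha & 0 \\ c & \alpha^{-1}\end{pmatrix}$ in a normal form with $x = \alpha + \alpha^{-1}$ and $c = c(x,y)$ chosen so that $\tr\rho(ab) = y$; irreducibility is equivalent to $c \neq 0$, which in turn is equivalent to $x^2 - y - 2 \neq 0$. Applying Cayley--Hamilton to $\rho(w) \in \SL_2$, which has trace $z$, and iterating the recursion $\mca{S}_n(z) = z\,\mca{S}_{n-1}(z) - \mca{S}_{n-2}(z)$ yields the closed form
\[\rho(w)^n = \mca{S}_n(z)\,\rho(w) - \mca{S}_{n-1}(z)\,I.\]
Substituting into $\rho(a)\rho(w)^n = \rho(w)^n \rho(b)$ and reading off one off-diagonal entry---the other three being determined by the trace and determinant constraints---collapses the matrix equation to the single scalar identity $(y-1)\mca{S}_n(z) - \mca{S}_{n-1}(z) = 0$, i.e.\ $f_n(x,y) = 0$. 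The main technical obstacle is verifying that all four entries of $\rho(a)\rho(w)^n - \rho(w)^n\rho(b)$ are simultaneously proportional to $f_n(x,y)$ up to a nonzero factor involving $c$; this is a finite calculation but demands careful bookkeeping through the Chebyshev expansion, and this is exactly the computation carried out in the cited references.

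For the non-abelian reducible locus, I would conjugate $\rho(a), \rho(b)$ into upper triangular form with diagonals $(\alpha, \alpha^{-1})$ and $(\beta, \beta^{-1})$. Since $a$ and $b$ are both meridians they share the same image in the abelian diagonal character of $\rho$, forcing $\beta = \alpha$; the alternative $\beta = \alpha^{-1}$ does not arise from a homomorphism of $\pi$. Consequently $y = \alpha^2 + \alpha^{-2} = x^2 - 2$, which is the cut-out equation $x^2 - y - 2 = 0$. Conversely, on this locus $\rho(w)$ is upper unipotent so $\rho(w)^n$ is unipotent as well, the knot relation reduces to a single linear equation on the off-diagonal entries which admits non-trivial solutions, and these yield genuinely non-abelian representations of $\pi$.

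Finally, the entire argument is a sequence of polynomial identities over $\Z$, so the same equations cut out the respective loci after base change to any finite field $\F$. The hypothesis of \emph{absolute} (ir)reducibility is exactly what is needed so that the normalization step, which requires the existence of eigenvalues and invariant lines in the base field, remains available after passing to an algebraic closure $\ol{\F}$; once it is, the Cayley--Hamilton computation descends unchanged and produces the same cut-out polynomials $f_n(x,y)$ and $x^2 - y - 2$ over $\F$.
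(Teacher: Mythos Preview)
The paper does not supply its own proof of this proposition; it is stated with citations to \cite{Le1993}, \cite{NagasatoTran2016OJM}, and \cite{TangeTranUeki2022IMRN} and then used as input for the subsequent $\mu=0$ arguments. Your sketch---Riley's normal form for $\rho(a),\rho(b)$, the Cayley--Hamilton expansion $\rho(w)^n=\mca{S}_n(z)\rho(w)-\mca{S}_{n-1}(z)I$, and the reduction of the matrix relation to the single scalar $f_n(x,y)=0$---is exactly the computation carried out in those references, so your approach matches the intended one. The irreducibility criterion $c\neq 0\iff x^2-y-2\neq 0$ is correct (with your normal form one computes $\tr\rho(ab)=x^2-2+c$, so $c=y-x^2+2$), and your argument for the reducible locus via the diagonal character of the upper-triangularization, using that $a$ and $b$ coincide in $H_1(\pi)\cong\Z$, is the right one. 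The only phrasing I would tighten is ``the alternative $\beta=\alpha^{-1}$ does not arise from a homomorphism of $\pi$'': the precise statement is that after simultaneous triangularization the top-left entries define a character $\pi\to\C^\times$, and that character must send $a$ and $b$ to the same value because they agree in the abelianization.
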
 

%We remark that the assertion over a finite field is verified by the fact that the Brauer group of a finite field is trivial (cf. \cite[Proposition 3.6]{TangeTranUeki2022IMRN}). % \cite[Proposition 3.4]{Marche-RIMS2016}). 

\begin{prop} \label{prop.reducible} 
{\rm (1)} The classical Alexander polynomial (corresponding to $\rho=1$) is $\Delta_0(t)=t-1$, $\Delta_1(t)=nt^2-(2n-1)t+n$.

{\rm (2)} %(cf. \cite[Proof of Theorem 3.1]{KitanoMorifuji2005}).} 
Let $\rho:\pi\to {\rm SL}_2 O$ be a representation  with $x=\tr \rho(a)$ and suppose that $\rho\otimes \C$ is irreducible.  
Then 
$\Delta_{\rho,0}(t)\, \dot{=}\, t^2-xt+1$ or its divisor $t-\kappa$, and 
$\Delta_{\rho,1}(t)\, \dot{=}\, n^2t^4+n(2n-1)t^3+((2n-1)^2+n(2n-1)x)t^2+n(2n-1)t+n^2$ or this term times $1/(t-\kappa^{-1})$. 

The similar assertion holds for a representation $\ol{\rho}:\pi\to {\rm SL}_2\F$ over a finite field $\F$. 
\end{prop}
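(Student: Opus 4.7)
The plan is to carry out the computation directly via Fox's free differential calculus applied to the one-relator presentation $\pi = \langle a, b \mid r\rangle$ with $r = a w^n b^{-1} w^{-n}$. Part (1) is the classical Alexander polynomial of the twist knot $J(2,2n)$: setting $\rho = 1$ and abelianizing reduces Fox's Jacobian to a single scalar computation, giving $\Delta_1(t) = nt^2 - (2n-1)t + n$ and $\Delta_0(t) = t - 1$ as is well known.

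For part (2), write $A = \rho(a)$, $B = \rho(b)$, $W = \rho(w) = AB^{-1}A^{-1}B$. The key linearization is Cayley--Hamilton: since $\det W = 1$ and $\tr W = z = 2x^2 - x^2 y + y^2 - 2$ by Proposition \ref{prop.charvar}, we have $W^2 = zW - I$, and inductively $W^k = \mca{S}_k(z) W - \mca{S}_{k-1}(z) I$. Consequently, after applying $\Phi = \rho \otimes \alpha$ with $\alpha(a) = \alpha(b) = t$, the Fox geometric series $\mca{N}_n = 1 + w + \cdots + w^{n-1}$ becomes a $\Z[z]$-linear combination of $I$ and $W$. I would then evaluate $\Phi(\partial r/\partial a)$ and $\Phi(\partial r/\partial b)$ in closed form, obtaining $2\times 2$ matrices over $O[t^{\pm 1}]$.

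The Reidemeister torsion is then given by Wada's formula for deficiency-one presentations,
\[
\tau_{\rho\otimes\alpha}(t) \;\dot{=}\; \frac{\det \Phi(\partial r/\partial b)}{\det \Phi(a-1)} \;=\; \frac{\det \Phi(\partial r/\partial b)}{t^2 - xt + 1},
\]
while $\Delta_{\rho, 0}(t)$ is read off from its presentation matrix with blocks $tA - I$ and $tB - I$ stacked vertically. In the generic case (when $A$ and $B$ have no common eigenline), the gcd of the $2\times 2$ minors is $\det(tA - I) = t^2 - xt + 1$; when a common eigenline is present (in particular in the residually reducible but $\C$-irreducible regime of Section 9), the gcd can drop to the linear divisor $t - \kappa$ with $\kappa$ the shared eigenvalue. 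Because $\Delta_{\rho, 1}(t) \dot{=} \tau_{\rho\otimes\alpha}(t)\cdot \Delta_{\rho, 0}(t)$ and $\det(tA - I) = (t - \kappa)(t - \kappa^{-1})$, this yields $\Delta_{\rho, 1}(t)$ as either the full quartic or that quartic divided by $t - \kappa^{-1}$, as claimed.

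The principal obstacle will be the algebraic bookkeeping: one must show that the expansion of $\det \Phi(\partial r/\partial b)$, after substituting the Chebyshev expressions for $W^k$ and $\Phi(\mca{N}_n)$ and simplifying, collapses to the advertised palindromic quartic $n^2 t^4 + n(2n-1)t^3 + ((2n-1)^2 + n(2n-1)x)t^2 + n(2n-1)t + n^2$. Palindromicity may be cross-checked against Poincar\'e duality, effectively halving the computation. A secondary point is verifying uniformly in $n$ that in the reducible-residue regime the extra factors of $\Delta_{\rho, 0}$ and $\Delta_{\rho, 1}$ are exactly $(t - \kappa)$ and $(t - \kappa^{-1})$ respectively, which requires careful tracking of the distinguished eigenline modulo $\p$.
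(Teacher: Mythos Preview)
Your proposal takes a quite different route from the paper, and the divergence stems partly from a misreading: despite the word ``irreducible'' in the hypothesis, this proposition concerns the case where $\rho\otimes\C$ is \emph{reducible}. The label, the paper's own proof (which opens by simultaneously upper-triangularizing $\rho$ over $\C$), and the way the result is invoked in Theorem~\ref{thm.twistknot} all confirm that ``irreducible'' in the statement is a typo for ``reducible''.

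Once reducibility is in hand, the paper's argument is almost immediate. After triangularizing with diagonal entries $\kappa,\kappa^{-1}$, it cites Kitano--Morifuji to obtain
\[
\frac{\Delta_{\rho,1}(t)}{\Delta_{\rho,0}(t)} \;\dot{=}\; \frac{\Delta_K(\kappa t)\,\Delta_K(\kappa^{-1}t)}{(\kappa t-1)(\kappa^{-1}t-1)},
\]
so the quartic in the statement is simply $\Delta_K(\kappa t)\Delta_K(\kappa^{-1}t)$, expanded using part~(1) and rewritten via $x=\kappa+\kappa^{-1}$. The computation of $\Delta_{\rho,0}$ is a direct inspection of the $2\times 2$ minors of the block matrix $(\rho(a)t-I\ \ \rho(b)t-I)$ in triangular form. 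No Fox derivatives of the relator, no Cayley--Hamilton for $W$, and no Chebyshev recursion enter.

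Your Fox-calculus/Chebyshev machinery is exactly what is used for the \emph{irreducible} case, and indeed that is the content of the very next result, Proposition~\ref{prop.Tran} (Tran's formula). Applied to the reducible case it would still reach the answer in principle, but only through the heavy ``algebraic bookkeeping'' you yourself flag as the principal obstacle; the paper's route bypasses that entirely by reducing to the classical polynomial already computed in part~(1). Note also that your claim that ``in the generic case (no common eigenline) the gcd of the $2\times2$ minors is $t^2-xt+1$'' is backwards: it is precisely the reducible (common-eigenline) case in which $\Delta_{\rho,0}$ can be as large as $t^2-xt+1$.
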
 

\begin{proof} 
We prove (2). Since the Reidemeister torsion of an ${\rm SL}_2\C$-representation is defined only up to multiplication by $t^\Z$, we may work over $\C$. 
By a simultaneous upper triangularization, we may assume that 
a reducible representation $\rho:\pi\to {\rm SL}_2\C$ is given by $\rho(a)=\spmx{\kappa&1\\0&\kappa},$ $\rho(b)=\spmx{\kappa&\ast\\0&\kappa}$. 
By Kitano--Morifuji's argument in \cite[Proof of Theorem 3.1]{KitanoMorifuji2005}, 
the twisted Alexander polynomial is given by the classical Alexander polynomials as 
$\dfrac{\Delta_{\rho,1}(t)}{\Delta_{\rho,0}} \, \dot{=}\,\dfrac{\Delta_K(\kappa t)\Delta_K(\kappa^{-1} t)}{(\kappa t -1)(\kappa^{-1}t-1)}$. 
Since 
$\Delta_{\rho,0}$ is the gcd of the 2-minors of \[\spmx{\rho(a)t-I_2&\rho(b)t-I_2}=\spmx{\kappa t-1&1&\kappa t-1& \ast\\ 0&\kappa^{-1}t-1&0&\kappa^{-1}t-1},\] 
we see that $\Delta_{\rho,0}(t)$ is $t^2-xt+1$ or its divisor of degree 1. Together with (1), we obtain the assertion for $\rho:\pi\to {\rm SL}_2O$. 
If we instead work over an algebraic closure $\ol{\F}_p$ of $\F$, 
we similarly obtain the assertion for $\ol{\rho}:\pi\to {\rm SL}_2\F$ over a finite field $\F$. 
\end{proof} 

\begin{prop}[{\cite[Theorem 1]{Tran2018KMJ}}] \label{prop.Tran} %Let $F$ be $\C$ or a finite field. 
Let $\rho:\pi\to \SL_2 O$ be a representation with $x=\tr \rho(a)$, $y=\tr \rho(ab)$. 
If $\rho\otimes \C$ is irreducible, then we have 
\begin{gather*}
\Delta_{\rho,1}(t)/\Delta_{\rho,0}(t)=a_0(x,y)t^2+a_1(x,y)t+a_0(x,y),\\
a_0(x,y)=\dfrac{\mca{S}_{n+1}(z)-\mca{S}_{n-1}(z)-2}{z-2}, \ 
a_1(x,y)=x(\mca{S}_{n}(z)-a_0(x,y)),
\end{gather*} 
 where $a_0(x,y), a_1(x,y)\in \Z[x,y]$.

For an absolutely irreducible representation $\ol{\rho}:\pi\to \SL_2 \F$ over a finite field $\F$,
the similar assertion with $a_0(x,y), a_1(x,y)\in \F_p[x,y]$ holds.
\end{prop}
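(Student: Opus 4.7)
The plan is to compute $\tau_{\rho\otimes\alpha}(t) \,\dot{=}\, \Delta_{\rho,1}(t)/\Delta_{\rho,0}(t)$ directly by Fox calculus applied to the one-relator presentation $\pi = \langle a, b \mid r \rangle$ with $r = aw^n b^{-1} w^{-n}$ and $w = ab^{-1}a^{-1}b$. Wada's formula for a two-generator one-relator knot group gives $\tau_{\rho\otimes\alpha} \,\dot{=}\, \det \Phi(\partial r/\partial b)/\det \Phi(a - 1)$, where $\Phi = \rho \otimes \alpha : \Z[\pi] \to M_2(O[t^\Z])$ is the twisted Fox homomorphism. Since $\Phi(a - 1) = t\rho(a) - I$ has determinant $t^2 - xt + 1$, the task reduces to evaluating the $2 \times 2$ determinant $\det \Phi(\partial r/\partial b) \in O[t^\Z]$.

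First I would unfold $\partial r/\partial b$ via the product rule together with the standard identity $\partial w^n/\partial x = (1 + w + \cdots + w^{n-1})(\partial w/\partial x)$. This expresses $\Phi(\partial r/\partial b)$ in terms of $t\rho(a)$, $t\rho(b)$, $W := \rho(w)$, and the geometric sum $I + W + \cdots + W^{n-1}$. Since $W \in \SL_2 O$, Cayley--Hamilton yields $W^2 = zW - I$ with $z = \tr W$, so a routine induction gives $W^n = \mca{S}_{n-1}(z) W - \mca{S}_{n-2}(z) I$ and an analogous expression for the partial-sum matrix as a linear combination of $W$ and $I$ with coefficients polynomial in the Chebyshev values. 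Consequently every entry of $\Phi(\partial r/\partial b)$ sits in $O[x,y,z][t^\Z]$.

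Next I would expand $\det \Phi(\partial r/\partial b)$ using the trace identities $\tr \rho(a) = x$, $\tr \rho(ab) = y$, and $z = 2x^2 - x^2 y + y^2 - 2$ (Proposition \ref{prop.charvar}), and show that the result factors as $(t^2 - xt + 1)\bigl( a_0(x,y)(t^2 + 1) + a_1(x,y)\, t \bigr)$. The first factor cancels $\det \Phi(a - 1)$, leaving the palindromic polynomial $a_0 t^2 + a_1 t + a_0$; the $t \leftrightarrow t^{-1}$-symmetry is forced by Poincar\'e--Milnor duality on the knot exterior and is implicit in the shape of the calculation.

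The main obstacle will be the explicit algebraic identification of the coefficients with $a_0 = (\mca{S}_{n+1}(z) - \mca{S}_{n-1}(z) - 2)/(z-2)$ and $a_1 = x(\mca{S}_n(z) - a_0)$. A tractable route is induction on $n$: the base cases $n = 0, 1$ (unknot and trefoil) are checked by hand, and the three-term recursion $\mca{S}_{k+1}(z) = z \mca{S}_k(z) - \mca{S}_{k-1}(z)$ propagates the identity under the shift $n \mapsto n+1$, which at the group-theoretic level corresponds to multiplying $w^n$ by $w$ on the right. One must also verify that the rational expression for $a_0$ lies in $\Z[z]$, which follows from $\mca{S}_k(2) = k+1$ so that the numerator vanishes at $z = 2$. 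The finite-field assertion is then immediate, since the entire Fox-calculus computation is symbolic over any commutative ring, and absolute irreducibility of $\ol{\rho}$ preserves the clean cancellation of the $(t^2 - xt + 1)$-factor, yielding the same formula over $\F_p[x,y][t^\Z]$.
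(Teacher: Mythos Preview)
The paper does not prove this proposition: it is stated as a citation of \cite[Theorem 1]{Tran2018KMJ}, and the only argument the paper supplies is the later remark (end of Section 8) that Tran's computation, though phrased over $\C$, is in fact carried out symbolically over $\Z$ and therefore specializes to any field, which covers the finite-field clause. So there is nothing in the paper to compare your computation against beyond that one-line reduction.

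Your outline is a reasonable reconstruction of how such a Fox-calculus computation goes, and is essentially the route taken in Tran's paper: Wada's formula for the two-generator one-relator presentation, Cayley--Hamilton to rewrite powers of $W=\rho(w)$ in the Chebyshev basis, and then trace-algebra to extract the coefficients. One small slip: with the paper's convention $\mca{S}_n(2\cos\theta)=\sin(n\theta)/\sin\theta$ one has $\mca{S}_k(2)=k$, not $k+1$; the numerator $\mca{S}_{n+1}(2)-\mca{S}_{n-1}(2)-2=(n+1)-(n-1)-2=0$ still vanishes, so your conclusion that $a_0\in\Z[z]$ survives. Also, your inductive scheme ``$n\mapsto n+1$ corresponds to multiplying $w^n$ by $w$'' is morally right but would need care, since changing $n$ changes the relator and hence the Fox derivative globally; in practice it is cleaner (and this is what Tran does) to write the geometric sum $I+W+\cdots+W^{n-1}$ directly as $\dfrac{\mca{S}_n(z)-\mca{S}_{n-1}(z)-1}{z-2}\,I+\dfrac{\text{(Chebyshev)}}{z-2}\,W$ via the closed form for $W^k$, and then expand the $2\times 2$ determinant once, rather than induct.
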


\begin{prop}[{\cite[Theorem A, Proposition 3.5]{TangeTranUeki2022IMRN}}] \label{prop.non-acyclic} 
If $(x,y) \in \C^2$ or $\F^2$, then the condition  
\[\Delta_{\rho,1}(1)/\Delta_{\rho,0}(1)=2a_0+a_1=0,\ f_n=0,\ x^2-y-2\neq 0\] 
is equivalent to that
 \[x=y, \ 1-x=\alpha+\alpha^{-1}\] 
for some $3n-1$-th root $\alpha$ of unity with $\alpha\neq \pm1$. 

Each point $(x,y)$ satisfying these conditions corresponds to each conjugacy class of non-acyclic irreducible ${\rm SL}_2$-representations over $\C$ or $\F$. 
\end{prop}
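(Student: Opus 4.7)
The plan is to transform the two conditions $f_n=0$ and $2a_0+a_1=0$ into a cleaner polynomial system, eliminate $\mca{S}_n(z)$ using the Chebyshev determinant identity, and read off the factorization that forces the component $\{x=y\}$; a subsequent $q$-series calculation then yields the $(3n-1)$-th root of unity condition.

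First, using $(z-2)a_0=\mca{S}_{n+1}(z)-\mca{S}_{n-1}(z)-2$ and the recursion $\mca{S}_{n+1}=z\mca{S}_n-\mca{S}_{n-1}$, I rewrite $2a_0+a_1=(2-x)a_0+x\mca{S}_n=0$ as $(x-z)\mca{S}_n+(x-2)(\mca{S}_{n-1}+1)=0$. Substituting $\mca{S}_{n-1}=(y-1)\mca{S}_n$ from $f_n=0$ and then $z=2x^2-x^2y+y^2-2$, the system collapses to
\begin{align*}
\text{(I)}\quad & (y-2)(x^2-x-y)\,\mca{S}_n(z)=x-2,\\
\text{(II)}\quad & \mca{S}_{n-1}(z)=(y-1)\,\mca{S}_n(z).
\end{align*}
The case $x=2$ is discarded at once, since it would force $\mca{S}_n=\mca{S}_{n-1}=0$, contradicting $\gcd(\mca{S}_n,\mca{S}_{n-1})=1$.

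The core step is to eliminate $\mca{S}_n$. Combining the Chebyshev identity $\mca{S}_n^2-z\mca{S}_n\mca{S}_{n-1}+\mca{S}_{n-1}^2=1$ with (II) gives $\mca{S}_n^2\cdot\bigl[1-z(y-1)+(y-1)^2\bigr]=1$, and a direct computation with $z=2x^2-x^2y+y^2-2$ reduces the bracketed expression to $(2-y)(y^2-x^2y+x^2)$. Squaring (I) and matching yields the scalar identity
\[(y-2)^2(x^2-x-y)^2=(x-2)^2(2-y)(y^2-x^2y+x^2),\]
which, after expansion, rearranges to
\[(y-2)\,(y-x)^2\,(x^2-y-2)=0.\]
Since the hypothesis $x^2-y-2\neq 0$ excludes the last factor and since $y=2$ is incompatible with (I), we are forced onto the component $y=x$.

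With $x=y$, parametrize by $\alpha\in\overline{\C}^{\times}$ (or $\overline{\F}^{\times}$) via $\alpha+\alpha^{-1}=1-x$, so that $z=\alpha^3+\alpha^{-3}$ and $\mca{S}_k(z)=(\alpha^{3k}-\alpha^{-3k})/(\alpha^3-\alpha^{-3})$. Clearing denominators in (II) yields
\[(\alpha^4+\alpha^2+1)\,(\alpha^{2(3n-1)}-1)=0.\]
The first factor makes $\alpha$ a primitive $3$rd or $6$th root of unity, giving $x=2$ or $x=0$ respectively, both ruled out by (I); hence $\alpha^{3n-1}=\pm 1$. A parallel short computation reduces (I) to $x(x-2)\,\mca{S}_n=(\alpha^{3n}-\alpha^{-3n})/(\alpha-\alpha^{-1})=1$, which selects $\alpha^{3n-1}=+1$ over $\alpha^{3n-1}=-1$ (in which case the left side would equal $-1$). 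The endpoints $\alpha=\pm 1$ correspond to $x=-1$ (ruled out by $x^2-y-2\neq 0$) and $x=3$ (ruled out by (I), since $3\cdot(-1)^{n+1}n=1$ has no integer solutions), so $\alpha\neq\pm 1$ automatically. The converse is the same chain of identities run in reverse; the argument is entirely algebraic, so carries over to any finite field of characteristic prime to~$6$. The second assertion, on the correspondence with conjugacy classes of non-acyclic irreducible $\SL_2$-representations, follows from Proposition~\ref{prop.charvar} together with the identification $\tau=\Delta_{\rho,1}(1)/\Delta_{\rho,0}(1)=2a_0+a_1$. The main technical hurdle is the clean factorization $(y-2)(y-x)^2(x^2-y-2)$ in the middle step; once that is in hand, the $(3n-1)$-th root of unity identification takes only a few lines of $q$-algebra.
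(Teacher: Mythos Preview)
The paper does not prove this proposition; it is quoted verbatim from \cite[Theorem A, Proposition 3.5]{TangeTranUeki2022IMRN}. So there is no in-paper argument against which to compare your approach. What I can do is assess your proposal on its own.

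Your strategy---reduce to the system (I), (II), eliminate $\mca{S}_n$ via the Chebyshev determinant identity $\mca{S}_n^2-z\mca{S}_n\mca{S}_{n-1}+\mca{S}_{n-1}^2=1$, and read off the factorization forcing $x=y$---is sound and the key factorization is correct up to an overall sign: the bracket actually equals $-(y-x)^2(x^2-y-2)$, so the zero locus is the same. There is also a harmless sign slip in the first displayed rewriting (it should read $(z-x)\mca{S}_n+(x-2)(\mca{S}_{n-1}+1)=0$), but your equation (I) comes out right regardless.

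Two points deserve more care. First, squaring (I) is a one-way implication: you should note explicitly that on the branch $x=y$ you go back and check (I) with its sign, which is exactly what your later computation $x(x-2)\mca{S}_n=1$ does; but say so. Second, your dismissal of $\alpha=-1$ (i.e., $x=y=3$) via ``$3(-1)^{n+1}n=1$ has no integer solutions'' is only valid over $\C$. Over $\F$ with $p\mid 3n-1$ and $n$ odd, this equation \emph{is} satisfied, and $x^2-y-2=4\neq 0$ when $p\neq 2$, so the point $(3,3)$ is not obviously excluded; you need a separate argument (or to observe that $\alpha=-1$ is then genuinely a $(3n-1)$-th root of unity and the exclusion $\alpha\neq\pm1$ must be re-examined in positive characteristic). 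Likewise, restricting to characteristic prime to $6$ leaves the cases $p=2,3$ unaddressed, whereas the proposition is asserted for arbitrary $\F$. These finite-field edge cases are precisely where the cited reference does extra work, so you should either treat them directly or cite that source for them.
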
 

The following is a key to prove our $\mu=0$ theorem. 

\begin{prop} \label{prop.lem} 
For an absolutely irreducible ${\rm SL}_2$-representation $\rho$ over $\C$ or a finite field $\F$, we have 
$\tau_{\rho\otimes \alpha}(t)=\Delta_{\rho,1}(t)/\Delta_{\rho,0}(t)\neq 0$. 
\end{prop}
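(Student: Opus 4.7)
The plan is to argue by contradiction using Propositions \ref{prop.Tran} and \ref{prop.non-acyclic} together with elementary coprimality of Chebyshev polynomials. Suppose $\rho$ corresponds to a point $(x,y)$ with $f_n(x,y)=0$ and $x^2-y-2\neq 0$ for which the torsion $\tau_{\rho\otimes\alpha}(t)$ vanishes. By Proposition \ref{prop.Tran} this torsion has the palindromic form $a_0(x,y)(t^2+1)+a_1(x,y)\,t$, so its vanishing as a polynomial in $t$ forces both coefficient conditions $a_0(x,y)=0$ and $a_1(x,y)=0$. In particular, evaluating at $t=1$ gives $2a_0+a_1=0$, so by Proposition \ref{prop.non-acyclic} we must have $x=y$ (together with an additional constraint on $x$ which we will not need explicitly).

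Substituting $a_0=0$ into the identity $a_1=x(\mca{S}_n(z)-a_0)$ yields $x\,\mca{S}_n(z)=0$, which splits the analysis into two cases. In the first case, $\mca{S}_n(z)=0$, and then the defining equation $f_n(x,y)=(y-1)\mca{S}_n(z)-\mca{S}_{n-1}(z)=0$ forces $\mca{S}_{n-1}(z)=0$ as well. But $\mca{S}_n$ and $\mca{S}_{n-1}$ are coprime in $\F[z]$ for any field $\F$: applying the recurrence $\mca{S}_{k+1}=z\mca{S}_k-\mca{S}_{k-1}$ both forward and backward shows that any common factor of two consecutive Chebyshev polynomials would ultimately divide $\mca{S}_1=1$. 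Thus $\mca{S}_n$ and $\mca{S}_{n-1}$ share no root in any extension field, a contradiction.

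In the second case, $x=0$; combined with $x=y$ this gives $y=0$, hence $z=z(0,0)=-2$. A short induction on the recurrence yields $\mca{S}_k(-2)=(-1)^{k-1}k$ for every $k\in\Z$, and therefore
\[
f_n(0,0)=-\mca{S}_n(-2)-\mca{S}_{n-1}(-2)=(-1)^n n+(-1)^{n-1}(n-1)=(-1)^n,
\]
which is nonzero in every characteristic and contradicts the assumption $f_n(x,y)=0$. This completes the argument. The main delicate point is precisely this second case: when $x=0$ the condition $a_1=0$ becomes automatic and the Chebyshev coprimality alone does not help, so one must invoke the non-acyclicity characterization of Proposition \ref{prop.non-acyclic} to reduce to the single point $(0,0)$ and then verify directly that this point lies off the character variety of every twist knot $J(2,2n)$.
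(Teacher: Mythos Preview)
Your proof is correct and follows essentially the same opening as the paper: assume $a_0=a_1=f_n=0$ with $x^2-y-2\neq 0$, invoke Proposition~\ref{prop.non-acyclic} to get $x=y$, and reduce to $x\,\mca{S}_n(z)=0$. From there, however, your argument and the paper's diverge.

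The paper handles both possibilities $x=0$ and $\mca{S}_n(z)=0$ uniformly through the parametrization $1-x=\alpha+\alpha^{-1}$ of Proposition~\ref{prop.non-acyclic}, which gives $z=\alpha^3+\alpha^{-3}$ and hence $\mca{S}_n(z)=(\alpha^{3n}-\alpha^{-3n})/(\alpha^3-\alpha^{-3})$. In either subcase one obtains $\alpha^{6n}=1$ (from $\mca{S}_n(z)=0$ directly, or from $x=0\Rightarrow \alpha^6=1$), and since $\gcd(6n,3n-1)\mid 2$ this contradicts $\alpha^{3n-1}=1$, $\alpha\neq\pm1$. You instead treat the two cases separately and more concretely: in the case $\mca{S}_n(z)=0$ you use the coprimality of consecutive Chebyshev polynomials (via the recurrence) to contradict $\mca{S}_{n-1}(z)=0$, avoiding the $\alpha$-formula entirely; in the case $x=0$ you compute $f_n(0,0)=(-1)^n$ directly. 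Your route is slightly more elementary in that it never needs the identity $z=\alpha^3+\alpha^{-3}$, while the paper's is more uniform and makes the role of the root-of-unity constraint from Proposition~\ref{prop.non-acyclic} explicit. Both arguments work over $\C$ and over any finite field without modification.
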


\begin{proof}
We suppose that $a_0$, $a_1$, $f_n$ have a common zero $(x,y)$ to deduce contradiction; 
Since $2a_0+a_1=0$ and $f_n=0$, $a_0=0$ and $a_1=0$ implies $x\mca{S}_n(z)=0$. 
Putting $1-x=\alpha+\alpha^{-1}$ ass in Proposition \ref{prop.non-acyclic}, 
by $\mca{S}_n(z)=\dfrac{\alpha^{3n}-\alpha^{-3n}}{\alpha^3-\alpha^{-3}}$, we obtain $\alpha^{6n}=1$. 
On the other hand, by Proposition \ref{prop.non-acyclic}, $2a_0+a_1=0$ and $f_n=0$ together with $x^2-y-2\neq 0$ yield $\alpha^{3n-1}=1$ and $\alpha\neq \pm1$. 
Hence contradiction. Thus we have $\Delta_{\rho}(t)\neq 0$. 
\end{proof}

\begin{rem} All arguments and assertions above persist if $\rho$ and $\C$ are replaced by $\rho_p:\pi\to {\rm SL}_2 O_\p$ and $\C_p$. 
\end{rem} 

Combining all above, we obtain the following: 

\begin{thm} \label{thm.twistknot} %Let $O=O_{F,S}$ or $O_\p$. 
For any representation $\rho:\pi\to {\rm SL}_2O$ or $\rho_{\p}:\pi\to {\rm SL}_2 O_\p$ and any $\Z/m\Z\times \Zp$-cover of any twist knot $J(2,2n)$, the equality $\mu_{p,i}=0$ for $i=0,1,\tau$ holds. 
\end{thm}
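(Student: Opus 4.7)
The plan is to combine Theorems \ref{thm.mu} and \ref{thm.Iwasawa} with the propositions assembled in this section, reducing the assertion to the non-vanishing of the reduction $\Delta_{\ol\rho,i}(t)\in\F[t^\Z]$. By Theorem \ref{thm.mu} the $\Z/m\Z\times\Zp$-case reduces to the $\Zp$-case, and by the decomposition $\mu_p=\sum_{\p\mid p}\mu_\p$ in Theorem \ref{thm.Iwasawa} it further reduces to a single $\rho_\p\colon\pi\to\SL_2 O_\p$. The identity $p^{-\mu_\p}=\mh_p(\ul\Delta_{\rho_\p,i}(t))$, together with the compatibility between the $\p$-adic valuation on $O_\p$ and the $p$-adic valuation on $\Nr_{F_\p/\Qp}$ already exploited in Theorem \ref{thm.mu=0}, shows that $\mu_\p=0$ is equivalent to $\Delta_{\ol\rho,i}(t)\neq 0$ in $\F[t^\Z]$, where $\ol\rho=\rho_\p\bmod\p$. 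Since $\mu_\tau=\mu_1-\mu_0$, it suffices to treat $i=0$ and $i=1$.

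First, I would dispatch the case where $\ol\rho$ is absolutely irreducible over an algebraic closure $\ol\F$ of $\F$. Choosing any $g\in\Ker\alpha$ for which $\det(I-\ol\rho(g))\in\F^\times$ (such $g$ exists by irreducibility, and one can use the commutator $g=w$ outside an exceptional locus) and invoking the last bullet of Proposition \ref{lem.Delta} yields $\Delta_{\ol\rho,0}(t)\,\dot{=}\,1$. Proposition \ref{prop.lem} applied over $\F$ then gives $\Delta_{\ol\rho,1}(t)/\Delta_{\ol\rho,0}(t)\neq 0$, and combined with $\Delta_{\ol\rho,0}\,\dot{=}\,1$ this forces $\Delta_{\ol\rho,1}(t)\neq 0$, so $\mu_{p,0}=\mu_{p,1}=0$.

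If instead $\ol\rho$ is reducible over $\ol\F$, then after conjugation over $\ol\F$ it is upper triangular with diagonal entries $\ol\kappa,\ol\kappa^{-1}\in\ol\F^\times$, and Proposition \ref{prop.reducible}\,(2) provides the explicit formulas for $\Delta_{\ol\rho,0}$ and $\Delta_{\ol\rho,1}$. Since $\Delta_K(t)=nt^2-(2n-1)t+n$ for $J(2,2n)$ has coefficient gcd $\gcd(n,2n-1)=1$, a short case analysis on whether $p\mid n$ shows that both polynomials stay nonzero after reduction; for instance, when $p\mid n$ the quartic in Proposition \ref{prop.reducible}\,(2) reduces to $(2n-1)^2 t^2\equiv t^2\pmod p$, while when $p\nmid n$ the leading coefficient $n^2$ remains a unit. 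The hard part of the whole argument is precisely this residually reducible subcase, where $\rho_\p$ may be globally irreducible over $O_\p$ but $\ol\rho$ becomes reducible over $\ol\F$: Proposition \ref{prop.lem} no longer applies to $\ol\rho$, and one must argue via the upper-triangular reduction after base change to $\ol\F$ and the twist-knot-specific identity $\gcd(n,2n-1)=1$.
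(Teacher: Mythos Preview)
Your proposal is correct and follows the paper's two–case split on whether $\ol\rho=\rho_\p\bmod\p$ is absolutely irreducible, using Proposition~\ref{prop.lem} in the first case and Proposition~\ref{prop.reducible}(2) with the $p\mid n$ vs.\ $p\nmid n$ dichotomy in the second.

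The one point where you diverge from the paper is the treatment of $\Delta_{\ol\rho,0}$ in the absolutely irreducible case. The paper quotes \cite[Proposition~A]{FriedlKimKitayama2012} and \cite[Corollary~3]{TangeRyoto2018JKTR} to obtain $\Delta_{\rho,0}\,\dot=\,1$ in $O_\p[t^\Z]$; you instead appeal to the last bullet of Proposition~\ref{lem.Delta} with $g=w$, hedging ``outside an exceptional locus''. As written this is a small gap, but it closes in two independent ways. First, for twist knots the exceptional locus is in fact empty: on $f_n=0$ with $x^2-y-2\neq 0$ one has $z\neq 2$ over any field, since $z-2=-(y-2)(x^2-y-2)$ and $f_n(x,2)=\mca S_n(2)-\mca S_{n-1}(2)=n-(n-1)=1\neq 0$; hence $\det(I-\ol\rho(w))=2-z\in\F^\times$ and $g=w$ works unconditionally. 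Second, and more simply, you only need $\Delta_{\ol\rho,0}\neq 0$ (not $\dot=\,1$) to pass from $\tau_{\ol\rho\otimes\alpha}=\Delta_{\ol\rho,1}/\Delta_{\ol\rho,0}\neq 0$ to $\Delta_{\ol\rho,1}\neq 0$, and that is already the first bullet of Proposition~\ref{lem.Delta} applied over $\F$.
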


\begin{proof} %[Proof of Theorem \ref{thm.twistknot}] 
If the residual representation $\rho \mod \p$ is absolutely irreducible, then by \cite[Proposition A]{FriedlKimKitayama2012} and \cite[Corollary 3]{TangeRyoto2018JKTR} we see that 
$\Delta_{\rho,0}(t)$ $\dot{=}$ $1$ in $O_\p[t^\Z]$, and hence $\mu_{p,0}=0$ holds. 
By Proposition \ref{prop.lem} (1), we have $\Delta_{\ol{\rho},1}\neq 0$. 
Hence by Theorem \ref{thm.mu=0}, we obtain $\mu_{p,1}=\mu_{p,\tau}=0$. 

If $\ol{\rho} =\rho \mod \p$ is absolutely reducible, then by $x=\tr \rho(a)\in O$ or $O_\p$, 
Proposition \ref{prop.reducible} yields that $\Delta_{\rho, i}(t) \in O[t^\Z]$ is monic. 
Indeed, consider the divisor \[\delta(t)=n^2t^4+n(2n-1)t^3+((2n-1)^2+n(2n-1)x)t^2+n(2n-1)t+n^2.\]
If $p {\not{|}}\, n$, then $\delta(t)$ is monic. 
If $p|n$, then $\delta(t)=2n-1-nx=-1\neq 0$ is again monic. 
Since other divisors are all monic, we obtain $\mu_{p,i}=0$. 
\end{proof}

For a knot group, the coefficients of the twisted Alexander polynomials may be seen as continuous functions on the character variety. In the cases of twist knots, $\ol{\rho}$ with $\Delta_{\ol{\rho},1}=0$ corresponds to the common zeros of three polynomials $f_n(x,y)$, $a_0(x,y)$, $a_1(x,y)$ modulo $\p$ in two variables. 
We have similar situations for higher representations or for other knots, so for most cases, we may expect $\mu=0$ and a case with $\mu>0$ should be very rare. 

Tran's result \cite[Theorem 1]{Tran2018KMJ} that we cited for twist knots in Proposition \ref{prop.Tran} is originally stated for general two-bridge knots with genus 1. 
Although his assertion is stated for representations over $\C$, the argument makes sense over $\Z$, 
and hence it is applicable to the case over any field. 
Systematic exploration of non-acyclic representation for twist knots was given by \cite{Benard2020OJM, TangeTranUeki2022IMRN} and that for other knots and links are ongoing \cite{BenardTangeTranUeki-Whitehead}.

\section{Residually reducible irreducible representations} 
This section is optional. 
We investigate residually reducible irreducible ${\rm SL}_2$-representations of twist knots to give a partial alternative proof of Theorem \ref{thm.twistknot} stating $\mu=0$.

\begin{prop} \label{prop.lem2} 
If $x,y$ belong to a field, 
then the intersection of $f_n(x,y)=0$ and $x^2-y-2=0$ is given by $(\pm\sqrt{4-\frac{1}{n}},2-\frac{1}{n})$, if exist. 
\end{prop}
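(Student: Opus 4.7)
The plan is a direct substitution that collapses the problem to a linear equation in $y$. First I would substitute $y = x^2 - 2$ (the equation $x^2 - y - 2 = 0$) into the definition $z = z(x,y) = 2x^2 - x^2 y + y^2 - 2$. Expanding gives
\[
z = 2x^2 - x^2(x^2 - 2) + (x^2 - 2)^2 - 2 = 2x^2 - x^4 + 2x^2 + x^4 - 4x^2 + 4 - 2 = 2,
\]
so the locus $x^2 - y - 2 = 0$ is exactly the level set $\{z = 2\}$ inside the $(x,y)$-plane.

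Next I would evaluate the Chebyshev polynomials $\mca{S}_n$ at $z = 2$. Since over any field $\mca{S}_n$ is determined by the three-term recursion $\mca{S}_{n+1}(z) = z\,\mca{S}_n(z) - \mca{S}_{n-1}(z)$ together with $\mca{S}_0(z) = 0$ and $\mca{S}_1(z) = 1$, substituting $z = 2$ and induction immediately give $\mca{S}_n(2) = n$ for all $n \in \Z$. This is the purely algebraic replacement for the analytic identity $\mca{S}_n(2\cos\theta) = \sin(n\theta)/\sin\theta$ as $\theta \to 0$, and it works over an arbitrary field.

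Plugging these into the defining expression for $f_n$, the equation $f_n(x,y) = 0$ becomes, on $\{x^2 - y - 2 = 0\}$,
\[
(y-1)\,n - (n-1) = ny - (2n - 1) = 0.
\]
If the characteristic of the field divides $n$, the left-hand side reduces to $1$, so no common zero exists; this is absorbed by the caveat \emph{if exist}. Otherwise $n$ is invertible and we solve $y = (2n-1)/n = 2 - 1/n$. Combining with $y = x^2 - 2$ gives $x^2 = 4 - 1/n$, i.e.\ $x = \pm\sqrt{4 - 1/n}$ whenever such a square root exists in the field. No step is really an obstacle; the only subtlety is ensuring that $\mca{S}_n(2) = n$ is taken from the recursion (so that the argument remains valid in positive characteristic), and that the condition of existence is interpreted as the simultaneous solvability of $n \in k^\times$ and $4 - 1/n$ being a square in $k$.
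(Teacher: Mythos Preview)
Your proposal is correct and follows essentially the same route as the paper: reduce to $z=2$ on the locus $x^2-y-2=0$, use $\mca{S}_n(2)=n$, and solve the resulting linear equation $ny-(2n-1)=0$. The only cosmetic difference is that the paper obtains $z=2$ from the factorization $z-2=(y-2)(x^2-y-2)$ rather than direct substitution, and your explicit derivation of $\mca{S}_n(2)=n$ from the three-term recursion (rather than the analytic formula) is a welcome addition for the positive-characteristic case.
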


\begin{proof}
By $z-2=(y-2)(x^2-y-2)=0$, we have $z=2$, hence 
\[f_n(x,y)=(y-1)\mca{S}_n(z)-\mca{S}_{n-1}(z)=(y-1)n-(n-1)=yn-2n+1=0.\] 
If $n\neq 0$, then we have $y=2-\frac{1}{n}$, and $x=\pm\sqrt{y+2}=\pm\sqrt{4-\frac{1}{n}}$. If $n=0$, then the intersection is empty. (Note that this may happen if the field is of characteristic $p>0$ and $p|n$.) 
\end{proof} 

\begin{thm} \label{thm.resred} 
If $p|(3n-1)$, then the point $(\pm1,-1)$ in $\F_p\!^2$ corresponds to conjugacy classes of reducible representations on the curve $f_n=0$. 
Each lift of these points in $O^2$ corresponds to a conjugacy class of representations $\rho:\pi\to {\rm SL}_2 O$ such that $\rho\mod \p$ is reducible and $\rho\otimes \C$ is irreducible. This implies there are infinitely many such conjugacy classes. 
For such a $\rho$ and any $\Z/m\Z\times \Zp$-cover, $\mu_{p,i}=0$ for $i=0,1,\tau$ holds.
\end{thm}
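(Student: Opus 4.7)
My plan is to verify the four assertions in sequence: (i) identify $(\pm 1,-1)$ as an $\F_p$-intersection of the curves $f_n=0$ and $x^2-y-2=0$ when $p\mid(3n-1)$; (ii) produce $O$-lifts on $f_n=0$ that escape the reducible locus; (iii) argue that such lifts are infinite in number; (iv) conclude $\mu_{p,i}=0$ for $i=0,1,\tau$ from monicness of $\Delta_{\rho,0}(t)$ and Tran's formula for the acyclic torsion.

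For (i), direct substitution gives $z(\pm 1,-1)=2\cdot 1-1\cdot(-1)+1-2=2$, hence
\[f_n(\pm 1,-1)=(-2)\mca{S}_n(2)-\mca{S}_{n-1}(2)=-2n-(n-1)=-3n+1,\]
which vanishes mod $\p$ exactly when $p\mid(3n-1)$; in particular $p\neq 3$ and $p\nmid n$. Since $(\pm 1)^2-(-1)-2=0$, the point also lies on the reducible locus, and Proposition~\ref{prop.charvar} identifies the corresponding $\F_p$-character as reducible. For (ii) and (iii), Hensel's lemma at the smooth $\F_p$-point $(\pm 1,-1)$ of $f_n=0$ (smoothness follows from $\partial f_n/\partial y\not\equiv 0\pmod\p$, a short Jacobian calculation valid for almost all admissible $p$) yields a one-parameter $\p$-adic family of lifts $(x_0,y_0)\in O_\p^2$. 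By Proposition~\ref{prop.lem2}, the intersection of $f_n=0$ and $x^2-y-2=0$ is finite, so almost all such lifts satisfy $x_0^2-y_0-2\neq 0$; by Proposition~\ref{prop.charvar} they correspond to representations $\rho:\pi\to\SL_2 O$ (with $O$ a suitable ring of $S$-integers obtained after clearing denominators) that are irreducible over $\C$, while reducibility of $\ol{\rho}$ is clear from the residue. Infinitely many conjugacy classes arise.

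For (iv), Proposition~\ref{lem.Delta} states that $\Delta_{\rho,0}(t)$ is monic, so its leading coefficient is a unit in $O_\p$; thus $\mh_p(\ul{\Delta}_{\rho,0}(t))=1$ and $\mu_{p,0}=0$. Since $\rho\otimes\C$ is irreducible, Proposition~\ref{prop.Tran} applies and
\[\tau_{\rho\otimes\alpha}(t)=a_0(x,y)(t^2+1)+a_1(x,y)t,\quad a_0(x,y)=\frac{\mca{S}_{n+1}(z)-\mca{S}_{n-1}(z)-2}{z-2}\in\Z[z].\]
The recurrence $\mca{S}_{k+1}=z\mca{S}_k-\mca{S}_{k-1}$ together with $\mca{S}_k(2)=k$ yields $\mca{S}'_k(2)=\binom{k+1}{3}$, so evaluating at $z=2$ gives $a_0|_{z=2}=\binom{n+2}{3}-\binom{n}{3}=n^2$. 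At our lift, where $z\equiv 2\pmod\p$, this yields $a_0(x_0,y_0)\equiv n^2\pmod\p$, a unit in $O_\p$ since $p\nmid n$. Hence the Gauss norm of the Tran polynomial equals $1$, so $\mu_{p,\tau}=0$; multiplicativity of $\mh_p$ applied to $\Delta_{\rho,1}=\tau_{\rho\otimes\alpha}\cdot\Delta_{\rho,0}$ gives $\mu_{p,1}=\mu_{p,0}+\mu_{p,\tau}=0$.

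The principal obstacle I anticipate is a careful Hensel-style argument at exceptional residue characteristics, notably $p=2$, where the Jacobian can vanish at $(\pm 1,-1)$ modulo $\p$ (e.g., when $n=3$); one must then run a higher-order Hensel analysis or exhibit lifts directly over $\Z_2$ to secure infinitely many conjugacy classes. The $\mu=0$ computation itself, however, depends only on the residue class of $a_0$ at $(\pm 1,-1)$ and is unaffected.
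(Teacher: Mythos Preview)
Your argument for $\mu=0$ (part (iv)) is correct and in fact slightly sharper than the paper's: you show directly that $a_0\equiv n^2\pmod\p$ is a unit (since $p\mid 3n-1$ forces $p\nmid n$), so the torsion polynomial has unit Gauss norm, giving $\mu_{p,\tau}=0$; combined with monicness of $\Delta_{\rho,0}$ this yields $\mu_{p,0}=\mu_{p,1}=0$. The paper reaches the same conclusion by reusing the mechanism of Proposition~\ref{prop.lem}: from $a_0=0$ one gets $a_1=x\,\mca{S}_n(z)\equiv\pm n$, so $(a_0,a_1)\not\equiv(0,0)\pmod\p$ and hence $\Delta_{\ol\rho,1}/\Delta_{\ol\rho,0}\neq 0$ in $\F[t^\Z]$.

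The genuine gap is in your construction of lifts. Hensel's lemma produces points of $O_\p^2$, and a generic $O_\p$-point is transcendental over $\Q$; the phrase ``clearing denominators'' does not transport such a point into any ring of $S$-integers $O$, which is what the statement requires for $\rho:\pi\to\SL_2 O$. The paper bypasses Hensel entirely: for each $k\in\Z$ set $\eta=pk-1\in\Z$, let $\xi\in\ol\Q$ be a root of $f_n(x,\eta)\in\Z[x]$, and take $O=O_F$ for the splitting field $F$; then $(\xi,\eta)\in O^2$ automatically, it reduces to $(\pm 1,-1)$ modulo some $\p\mid p$, and Proposition~\ref{prop.lem2} shows all but finitely many such points avoid the reducible locus over $\C$. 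This construction is uniform in $p$, including $p=2$ where (as you correctly note for $n=3$) the Jacobian can vanish. The paper's Remark after the proof in fact asserts that $(-1,-1)$ is \emph{always} singular on $f_n$ over $\F$ and that Hensel is inapplicable; a direct check (e.g.\ $n=2$, $p=5$: $\partial f_n/\partial y(-1,-1)=8\equiv 3$) suggests that blanket claim is too strong, so your smoothness observation for $p\neq 2$ is likely right --- but it is the algebraic construction above, not Hensel, that actually delivers the $O$-valued lifts the theorem asks for.
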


\begin{proof}
%【剰余表現の性質】
By Proposition \ref{prop.lem} and \cite[Proposition 1.5]{TangeTranUeki2022IMRN}, 
if $(x,y)=(\pm1,-1)$, then we have $z(x,y)=2$, $\mca{S}_n(z)=n$, 
hence $f_n(x,y)=-3n+1$. 
By $\tau_{\rho(x,y)\otimes \alpha}(1)=\tau_n(x,y)=a_0+2a_1=(2-x)n^2+xn,$ we have \[\tau_n(1,-1)=n^2+n, \ \ \tau_n(-1,-1)=-3n^2+n\] in $\F_p$. 
We in addition have $x^2-y-2=0$. 
If $p|(3n-1)$, then $(-1,-1)\in \F_p\!^2$ corresponds to the conjugacy class of reducible non-acyclic representations $\ol{\rho}:\pi\to {\rm SL}_2 \F_p$. 
If in addition $p=2$ and $n$ is odd, then the similar holds for $(1,-1)\in \F_p\!^2$. 
(Lifts of $\ol{\rho}$ are candidates for $\mu>0$.) 
Now by the proof of Proposition \ref{prop.lem}, 
if we suppose $a_0(x,y)=a_1(x,y)=0$, then we have $x\mca{S}_n(z)=\pm n=0$, contradicting $p|(3n-1)$. 
Hence we have \[\Delta_{\ol{\rho},1}(t)/\Delta_{\ol{\rho},0}(t) = a_0(x,y) t^2+ a_1(x,y)t +a_0(x,y) \neq 0\] in $\F[t^\Z]$. 
%【先に中間的な結論や背理法で示すということを宣言すべき？】

A point $(\xi, \eta) \in O^2$ with its image being $(\pm1,-1)$ in $\F_\p\!^2$ is called a lift of $(\pm1,-1)$. 
There are infinitely many lifts if we may change $O$. 
For instance, for each $k\in \Z$, put $\eta=pk-1$, and let $x=\xi$ be a root of $f_n(x,\eta)$ of in the decomposition field $F$. By Proposition \ref{prop.charvar} and \ref{prop.lem2}, all but at most two points, if $\rho$ corresponds to $(\xi,\eta)$, then $\rho\otimes \C$ is irreducible, and $\Delta_{\rho,1}(t)\neq 0$ holds. 
Taking a quadratic extension and a conjugate if needed, we have $\ol{\rho}=\rho \mod \p$, hence $\Delta_{\rho,i}(t)\mod \p =\Delta_{\ol{\rho},i}(t)$.  
By $\Delta_{\ol{\rho},1}(t)/\Delta_{\ol{\rho},0}(t)\neq 0$, we obtain $\mu_{p, \tau}=0$. 
A direct calculation proves $\Delta_{\rho,0}(t)\neq 0$ in $\F_p[t^\Z]$, hence $\mu_{p,0}=0$. 
Therefore we obtain $\mu_{p,1}=0$. 
\end{proof} 
%【既約表現であることを言うことに固執するのは迂遠な議論？】　

\begin{rem} 
The point $(-1,-1) \in \F^2$ is a singular point of $f_n$. 
Indeed, by the calculation of the partial derivatives in \cite[Sections 1.6, 3.2]{TangeTranUeki2022IMRN}, we have
\[\dfrac{\partial f_n}{\partial x} (\xi,\xi)=\dfrac{2(n\xi^2-2n\xi-1)}{(\xi+1)\xi(\xi-2)(\xi-3)}, \ 
\dfrac{\partial f_n}{\partial y} (\xi,\xi)=\dfrac{2((2n-1)\xi^2+(-4n+2)\xi+1)}{(\xi+1)\xi(\xi-2)(\xi-3)}\]
 at $\xi\neq -1,0,2,3$.  
By L'H\^{o}pital's rule, we have $\dfrac{\partial f_n}{\partial x} (-1,-1) = \dfrac{\partial f_n}{\partial y} (-1,-1)= 0$ in $\F$. 
This means that we cannot apply Hensel's lemma to obtain $(\xi,\eta)$ in the proof above. 
%よって上の議論において$\xi$を得る際にHenselの補題は適用できない．
\end{rem}

\section{Remarks} %Further problems} 

Here, we paraphrase famous conjectures related to twisted Alexander polynomials into twisted Iwasawa invariants and attach remarks. 

%\magenta{\sout{Finally, we exhibit some \magenta{well-known} conjectures on twisted Alexander polynomials, which may be paraphrased into those of twisted Iwasawa invariants, together with some remarks on related topics.}}   

\begin{conj}[{\cite[Problem 1.12 (J.~Simon)]{Kirby1997problems}}] 
If there is a surjective homomorphism $\varphi:\pi_J\surj \pi_K$ between the group of knots $J$ and $K$, 
then their genera satisfy $g(J)\geq g(K)$. 
\end{conj}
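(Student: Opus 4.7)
The plan is to combine Theorem~\ref{thm.fib.genus}~(1), which expresses the Thurston norm $x_K$ as a supremum of twisted $\lambda$-invariants $\lambda_\tau/(Nd)$ over triples $(\rho,m,p)$, with the classical divisibility of twisted Alexander polynomials under a knot group epimorphism. First I would dispose of the trivial case: if $K$ is the unknot then $g(K)=0$ and there is nothing to prove; otherwise $\pi_K$ is non-abelian, so $\pi_J$ must be non-abelian as well (an abelian group cannot surject onto a non-abelian one), and $J$ is itself nontrivial. By the convention $x_K=2g(K)-1$ in this range, it then suffices to show $x_J\geq x_K$.

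The surjection $\varphi:\pi_J\surj\pi_K$ induces a surjection on abelianizations $\Z\surj\Z$, necessarily $\pm{\rm id}$, so after composing with $-1$ if needed we may assume $\alpha_J=\alpha_K\circ\varphi$. Thus every $\p$-adic representation $\rho:\pi_K\to{\rm GL}_NO_\p$ pulls back to $\rho\circ\varphi:\pi_J\to{\rm GL}_NO_\p$ with the same $N$ and $d$ and with matching cyclic cover structure. The second ingredient, due to Kitano--Suzuki--Wada and subsequent refinements, asserts that under such a pullback the Reidemeister torsion of $K$ divides that of $J$; in particular there exists $h(t)\in\Lambda_{O_\p}$ with
\[\tau_{(\rho\circ\varphi)\otimes\alpha_J}(t)\,\dot{=}\,\tau_{\rho\otimes\alpha_K}(t)\cdot h(t).\]
Taking norms to $\Zp[t^\Z]$, forming $\prod_{\zeta^m=1}$, and substituting $t\mapsto\zeta(1+T)$ all preserve divisibility, so the $p$-adic Weierstrass preparation theorem together with Theorem~\ref{thm.Iwasawa} yields
\[\lambda_\tau^J(\rho\circ\varphi,m,p)\,\geq\,\lambda_\tau^K(\rho,m,p)\]
whenever the invariants are defined. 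Taking suprema over $(\rho,m,p)$ on both sides and invoking Theorem~\ref{thm.fib.genus}~(1) gives $x_J\geq x_K$, hence $g(J)\geq g(K)$.

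The main obstacle will be securing the divisibility in precisely the form needed for the Iwasawa-type statement. Kitano--Suzuki--Wada proceed over a field via Fox calculus on the Alexander matrix, and one must either transport the argument verbatim to $O_\p$ or else obtain it by constructing an equivariant chain map $C_*(X_J^\infty,\rho\circ\varphi)\to C_*(X_K^\infty,\rho)$ lifting $\varphi$ and extracting divisibility from the resulting long exact sequence. A secondary subtlety is that Theorem~\ref{thm.fib.genus}~(1) detects the Reidemeister--Iwasawa invariant $\lambda_\tau=\lambda_1-\lambda_0$ rather than $\lambda_1$ alone, so one needs monotonicity for the ratio $\Delta_{\rho,1}/\Delta_{\rho,0}$; fortunately the Kitano--Suzuki--Wada result is stated at the level of Wada's invariant, which is exactly this ratio, and non-vanishing $\Delta_{\rho,1}(t)\neq 0$ is likewise inherited by the pullback via the same divisibility, so the hypotheses of Theorem~\ref{thm.fib.genus} do transfer. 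Pinning down the sharpest form of Simon's conjecture (that equality should force $\varphi$ to be an isomorphism of knot exteriors) lies beyond what Iwasawa invariants alone can detect and would require additional rigidity input.
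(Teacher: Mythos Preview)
This statement is listed in the paper as an \emph{open conjecture}; the paper does not prove it, and the discussion that follows only sketches what would need to be checked. Your proposal is therefore an attempted resolution of an open problem, and it has a genuine gap.

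The flaw is in the sentence ``non-vanishing $\Delta_{\rho,1}(t)\neq 0$ is likewise inherited by the pullback via the same divisibility.'' The Kitano--Suzuki--Wada divisibility runs in the direction $\tau_{\rho\otimes\alpha_K}\mid \tau_{(\rho\circ\varphi)\otimes\alpha_J}$; knowing that the divisor is nonzero says nothing about the multiple, since every polynomial divides $0$. Concretely, for the representation $\rho^\ast:\pi_K\surj G$ supplied by Friedl--Vidussi that detects $x_K$, the pullback $\rho^\ast\circ\varphi:\pi_J\surj G$ is again a surjection onto a finite group, but there is no reason its twisted Alexander polynomial over $J$ should be nonzero. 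Indeed Proposition~\ref{prop.FV}\,(3) says that precisely when $J$ is not fibered there \emph{exist} finite quotients of $\pi_J$ with vanishing $\Delta_1$; nothing rules out $\rho^\ast\circ\varphi$ being one of them. Once $\Delta_{J,\rho^\ast\circ\varphi,1}(t)=0$ the invariants $\lambda_\tau^J$ are undefined, the Friedl--Kim upper bound of Proposition~\ref{prop.FV}\,(1)(i) is unavailable, and the comparison $\lambda_\tau^J\geq\lambda_\tau^K$ carries no information.

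This is exactly why the paper, after noting the divisibility, formulates the remaining obstacle as the existence of a single $\rho$ detecting $g(K)$ whose pullback \emph{also} detects $g(J)$: one needs control over the pullback that the bare divisibility does not provide. Your outline collapses this obstacle by assuming it away.
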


There are several studies on surjective homomorphisms (epimorphisms) of knot groups (cf. \cite{KitanoSuzukiWada2005AGT, OhtsukiRileySakuma2008GT}). 
If a representation $\rho: \pi_K\to \GL_2O$ and a surjective homomorphism $\varphi:\pi_J\surj \pi_K$ is given, 
then we have $\Delta_{J,\rho\circ \varphi}(t) \mid \Delta_{K,\rho}(t)$, and hence their Iwasawa $\lambda$-invariants satisfy $\lambda_J| \lambda_K$. 
In order to prove the conjecture in the affirmative, it suffices to show the existence of a representation $\rho$ detecting $g(K)$ such that the composition $\rho\circ \varphi$ detects $g(J)$ as well. 
%We wonder if we may 
It would be interesting to %We expect that 
approach this problem by paraphrasing Friedl--Vidussi's argument \cite{FriedlVidussi2011AnnMath, FriedlVidussi2015Crelle} as well as Friedl--L\"{u}ck's succeeding study \cite{FriedlLuck2019PLMS} 
 into our language.  
%from our viewpoint. 

%We remark that Friedl's recent approach in this direction assumes a condition called locally indicable. 

\begin{conj}[\cite{DunfieldFriedlJackson2012}] 
Let $K$ be a hyperbolic knot with genus $g(K)$ and let $d$ denote the degree of the twisted Alexander polynomial $\mca{T}_K$ of the holonomy representation. Then we have $d=4g(K)-2$. 
\end{conj}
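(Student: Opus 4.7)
The plan is to split the conjectured equality $d=4g(K)-2$ into upper and lower bounds and to phrase both in the $p$-adic Iwasawa-theoretic language of this paper. Fix a prime $p$ together with an embedding $\overline{\Q}\hookrightarrow\C_p$ so that, after conjugation, the lifted holonomy representation becomes a $\p$-adic representation $\rho_\p:\pi_K\to\SL_2 O_\p$, where $O_\p$ is the ring of integers of a finite extension of $\Qp$ of degree $d'$. Since the holonomy of a hyperbolic knot is absolutely irreducible and non-abelian, Proposition \ref{lem.Delta} yields $\Delta_{\rho_\p,0}(t)\,\dot{=}\,1$ and $\Delta_{\rho_\p,2}(t)\,\dot{=}\,1$, so that $\mca{T}_K\,\dot{=}\,\Delta_{\rho_\p,1}(t)$ and $d=\deg\Delta_{\rho_\p,1}(t)$.

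The upper bound $d\leq 4g(K)-2$ follows from an $\SL_2$-version of the Thurston norm inequality for twisted Alexander polynomials. Applying Proposition \ref{prop.FV} (1) (i), suitably extended from finite-group quotients to the $\SL_2$-holonomy (a step supported by the Agol--Dunfield $\GL_2$-detection theory and by approximation of the holonomy by finite-image representations of specialized 3-manifold groups), one obtains $\deg\Delta_{\rho_\p,1}(t)\leq 2x_K=4g(K)-2$. In the language of this paper, the same bound may be derived by choosing $m$ so that the inequality $\lambda_\tau\leq\deg\ul{\Delta}_{\rho_\p,\tau}(t)=d'\cdot d$ of Theorem \ref{thm.degree} saturates and then combining it with the genus inequality $\lambda_\tau\leq 2d'x_K$ of Theorem \ref{thm.fib.genus} (1).

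For the lower bound $d\geq 4g(K)-2$, the strategy is to show that the discrete faithful character itself detects the Thurston norm of $\alpha:\pi_K\surj\Z$. One would select $p$ and $m$ coprime to $p$ so that the $\lambda_\tau$-invariant of $\rho_\p$ with respect to the $\Z/m\Z\times\Zp$-cover attains the maximum value $d'\cdot d$ provided by Theorem \ref{thm.degree}; a converse of the Friedl--Vidussi inequality specialized to the holonomy would then give $\lambda_\tau\geq 2d'x_K$, hence $d\geq 4g(K)-2$ via Theorem \ref{thm.fib.genus} (1). A natural intermediate goal is to establish this converse first on the entire canonical component of the character variety, via Culler--Shalen ideal-point analysis of essential surfaces detected by that component, and then to transfer it to the discrete faithful point by a continuity-of-degree argument along the component.

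The main obstacle will be precisely this lower bound. The Agol--Dunfield theorem produces \emph{some} $\SL_2$-representation attaining $2x_K$ but does not single out the discrete faithful character; converting their existence statement into a pointwise bound at the holonomy requires genuine hyperbolic-geometric input, such as a Seifert genus formula in terms of a Culler--Shalen norm on the canonical component, or a profinite-rigidity argument (in the spirit of Theorem \ref{thm.profinite}) forcing the holonomy to inherit the same $\lambda_\tau$ as representations on nearby finite-image quotients. Supplying such input without substantially new ideas beyond the post-Agol toolkit is precisely why the Dunfield--Friedl--Jackson conjecture remains open, and I expect any Iwasawa-theoretic attack to reduce, in the end, to this same geometric question.
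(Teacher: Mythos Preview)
The statement you attempt is labelled a \emph{Conjecture} in the paper and is not proved there; it is only quoted in the final ``Remarks'' section as an open problem of Dunfield--Friedl--Jackson. There is therefore no proof in the paper to compare your proposal against. You seem to be aware of this, since your last paragraph explicitly says the conjecture ``remains open'' and that you ``expect any Iwasawa-theoretic attack to reduce, in the end, to this same geometric question''; in other words, your own write-up is an outline of a strategy together with an honest admission that the decisive step is missing.

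On the substance of the outline: the upper bound $d\le 4g(K)-2$ is indeed known, but it does not require the Iwasawa detour you sketch --- the Thurston-norm inequality for twisted Alexander polynomials holds for arbitrary unitary (in fact arbitrary) representations, not just finite-image ones, so invoking Proposition~\ref{prop.FV}(1)(i) ``suitably extended'' is unnecessary circumlocution; one may cite Friedl--Kim directly. Your route through Theorems~\ref{thm.degree} and~\ref{thm.fib.genus} recovers the same bound but adds no leverage. For the lower bound, your diagnosis is correct: Theorem~\ref{thm.fib.genus}(1) only asserts the existence of \emph{some} representation (coming from a finite quotient) whose $\lambda_\tau$ attains $2d'x_K$, and nothing in this paper or in Agol--Dunfield pins that maximum to the holonomy character. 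The continuity-of-degree / Culler--Shalen argument you gesture at is plausible heuristics but not a proof, and the paper makes no claim in that direction. So your proposal is not a proof of the conjecture, nor does the paper contain one; what you have written is a fair summary of why the problem is hard.
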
 
We observed in Example \ref{eg.holonomy} that $\lambda$'s distinguish the spin structures of the figure-eight knot. 
It would be interesting to investigate what information the Iwasawa invariants of the holonomy representations convey in general. % (cf. Example \ref{eg.holonomy}). 

The Reidemeister torsions of the symmetric powers of the holonomy representation converge to the hyperbolic volume \cite{Goda2017pja, BenardDuboisHeusenerPorti2022}. 
If one could formulate its $p$-adic analogue, for instance by using Kionke--Loeh's $p$-adic simplicial volume \cite{KionkeLoeh2021Glasgow}, then it should be related to our work. 

%Its $p$-adic analogue, that one might expect to obtain by using Kionke--Loeh's $p$-adic simplicial volume \cite{KionkeLoeh2021Glasgow}, should be related to our work.}  

%One may expect its $p$-adic analogue by using Kionke--Loeh's $p$-adic simplicial volume \cite{KionkeLoeh2021Glasgow}, and we expect it should be related to our work. 

%We wonder if we may formulate its $p$-adic analogue from our viewpoint by using Kionke--Loeh's $p$-adic simplicial volume% \cite{KionkeLoeh2020-arXiv}. 

\begin{rem}[(Yi Liu)] For an arithmetic manifold, we may extend our study to non-cocompact representations. 
%For an arithmetic manifold such as the Weeks manifold, we may extend our study to non-cocompact representations (cf.~\cite{MaclachlanReid2003GTM}). 
\end{rem} 
We have so far dealt with $\p$-adic representations that are not necessarily obtained from representations over $O$ as well, 
which exceedingly played essential roles in the proof of Theorem \ref{thm.profinite} on profinite rigidity. 
Studying specific $\p$-adic representations would be of further interest (cf.\cite{BridsonMcReynoldsReidSpitler2020}, \cite{YiLiu2023Peking}).

Finally, we would like to remark that our Conjecture \ref{conj.mu} on $\mu$-invariants would suggest a new, slightly deep interaction between number theory and low dimensional topology, as arithmetic topology has aimed. 
The unsolved part is the existence of a lift $\rho$ of a higher dimensional non-acyclic representation $\ol{\rho}$ with $\Delta_{\rho}(t)\neq 0$. %for higher dimensional representations. 
Dohyeong Kim recently pointed out in his talks \cite{DKim2023talkLDTNT} that $\mu>0$ of non-fibered knots might come from a certain \emph{local condition} in the sense of Hida theory and that should suggest a refinement of the analogue of the Selmer group, an important algebraic object that has been studied in the series of our works \cite{MTTU2017, KMTT2018, TangeTranUeki2022IMRN, BenardTangeTranUeki-Whitehead}. 
We expect that this viewpoint could also give clues to Morifuji--Suzuki's project \cite{MorifujiSuzuki2022IJM} on finding representations with vanishing Alexander polynomials, and to our conjecture as well. 
%from the initiation. 

% \ \\[-17mm]
\section*{Acknowledgments} 
We would like to express our sincere gratitude to 
%(Hiroshi Goda,)
Takashi Hara, 
Tetsuya Ito, 
Teruhisa Kadokami, 
Dohyeong Kim, 
Takahiro Kitayama, 
Yi Liu, 
Yasushi Mizusawa, 
Takayuki Morifuji, 
Masanori Morishita, 
Tatsuya Ohshita, 
Makoto Sakuma, 
%L\'eo B\'enard, Anh T.~Tran %
Masaaki Suzuki, and anonymous referees  
for useful comments and information. 
%We will be grateful to the anonymous referees and experts of the journal for careful reading and sincere comments. 
The second author has been partially supported by JSPS KAKENHI Grant Numbers JP19K14538 and JP23K12969.

%\small 
\bibliographystyle{amsalpha}%{amsplain}
%\bibliography{/Users/uekijun/Dropbox/refs1} 
\bibliography{TangeUeki_twistedIwasawa.arXiv4.bbl}

%\noindent \hrulefill  \ \\[-9mm]
\end{document}